\documentclass{wyl}

\numberwithin{equation}{section} 

\begin{document}

 \abovedisplayskip 6pt plus 2pt minus 2pt \belowdisplayskip 6pt
plus 2pt minus 2pt
\def\vsp{\vspace{1mm}}
\def\th#1{\vspace{1mm}\noindent{\bf #1}\quad}
\def\proof{\vspace{1mm}\noindent{\it Proof}\quad}
\def\no{\nonumber}
\newenvironment{prof}[1][Proof]{\noindent\textit{#1}\quad }
{\hfill $\Box$\vspace{0.7mm}}
\def\q{\quad} \def\qq{\qquad}
\allowdisplaybreaks[4]


\AuthorMark{  }                             

\TitleMark{  }  

\title{A class of finite $p$-groups and the normalized unit groups of group algebras     
\footnote{Supported by National Natural Science Foundation of China (Grant No.12171142)}}                  

\author{Yulei Wang}
    {Department of Mathematics, \ Henan University of Technology,  \ Zhengzhou\ 450001, P. R. China\\
    E-mail\,$:$ yulwang@haut.edu.cn}

\author{Heguo  Liu}
    {Department of Mathematics, \ Hainan University,  \ Haikou\ 570228, P. R. China\\
    E-mail\,$:$ ghliu@hainanu.edu.cn}

\maketitle%

\Abstract{Let $p$ be a prime and $\mathbb{F}_p$ be a finite field of $p$ elements. Let $\mathbb{F}_pG$ denote the
group algebra of the finite $p$-group $G$ over the field $\mathbb{F}_p$ and $V(\mathbb{F}_pG)$ denote the  group of normalized units in $\mathbb{F}_pG$.
Suppose that $G$ is a  finite $p$-group given by a
central extension of the form
$$1\longrightarrow \mathbb{Z}_{p^n}\times \mathbb{Z}_{p^m} \longrightarrow G \longrightarrow \mathbb{Z}_p\times
\cdots\times \mathbb{Z}_p \longrightarrow 1$$ and $G'\cong
\mathbb{Z}_p$, $n, m\geq 1$ and $p$ is odd.  In this paper, the structure of $G$ is determined. And
the relations of $V(\mathbb{F}_pG)^{p^l}$ and $G^{p^l}$,
$\Omega_l(V(\mathbb{F}_pG))$ and $\Omega_l(G)$  are given.
Furthermore, there is a direct proof for  $V(\mathbb{F}_pG)^p\bigcap G=G^p$.}    

\Keywords{finite $p$-group, derived subgroup,  normalized unit,  central product}        

\MRSubClass{20C05, 20D15}      

\section{Introduction}
 Since the theory of finite solvable groups has been developed to a satisfactory degree, what remains now is the study and classification of finite $p$-groups.
   Classifying groups up to isomorphism is a fundamental problem in group theory.
   But there are not general ways unless one restricts to particularly well behaved groups. The special case of groups of prime power order is particularly difficult  as its was observed by P. Hall  in \cite{Hall}  where he wrote: `` To put is crudely, there is no apparent limit to the complication of a prime power group... And its seems unlikely that it will be possible to compass the overwhelming variety of prime power groups within the bounds of a single finite system of formulae".
  For example, Blackburn classified the finite $p$-groups with derived subgroup of order $p$ by associating to such a finite $p$-group a set of integer vectors having certain properties in \cite{Blackburn}. Moreover, it is also difficult to give in detail the structure of finite $p$-groups. For some special group classes, some results only can be obtained such as  extra-special $p$-groups in \cite{Winter}, generalized extra-special $p$-groups and more generalized $p$-groups in \cite{Dietz, LW, WL2},   $p$-groups with a cyclic Frattini subgroup and odd order $p$-groups of class $2$ such that the quotient by the center is homocyclic in \cite{Bornand}, and so on.

  According to  the properties of central product and inner abelian $p$-groups, we  consider the class of finite $p$-groups with derived subgroup of order $p$  given a central extension of an abelian group with rank $2$ by an  elementary abelian group. If $p=2$, we gave the structures of the class of finite $2$-groups in order to determine the orders of their unitary subgroups of group algebras in \cite{WL3}. In the article, we will consider the case $p$ is odd and study the properties of  the normalized unit groups of group algebras.

  Let $FG$ be the group algebra of a finite group $G$ over a field $F$.
Let $\varepsilon$ be the augmentation mapping of $FG$, which is
the homomorphism $\varepsilon: FG\mapsto F$ given by
$$\varepsilon \left(\sum\limits_{g\in G}\alpha_gg \right)=\sum\limits_{g\in G}\alpha_g.$$
The kernel of $\varepsilon$ is an ideal of $FG$ and the ideal is denoted by $\triangle(G)$.
Let $V(FG)$ be the normalized unit group of the group algebra $FG$ of a finite group $G$ over a field $F$, that is,
$V(FG)=\left\{x\in FG \ |\ \varepsilon(x)=1\right\}=1+\triangle(G).$
 Study of units and their properties is one of the main research problems in group ring theory.  Results obtained in this direction are also useful for the investigation of  central series, $p$-central, automorphisms, Lie properties of group rings and other open questions in this area (see, for example, \cite{Bovdi,Bovdi2}).

 One knows that for a finite abelian $p$-group $A$ and the field $\mathbb{F}_p$ of $p$ elements,   $V(\mathbb{F}_pA)^{p^l}=V(\mathbb{F}_pA^{p^l})$
and $\Omega_l(V(\mathbb{F}_pA))= 1+\triangle(A,\Omega_l(A)) $, $l\geq 1$ (as in \cite{Sandling}).
We will naturally consider the relations of $V(\mathbb{F}_pG)^{p^l}$ and $G^{p^l}$,
$\Omega_l(V(\mathbb{F}_pG))$ and $\Omega_l(G)$  for the non-abelian $p$-group $G$. Moreover,
 there is a question: for which $p$-group $G$ is it true that $G\bigcap V(\mathbb{F}_pG)^p=G^p$ in \cite{Johnson}.
The question was solved affirmatively  for some classes of finite $p$-groups
 which have  the normal complement in the normalized unit groups of group algebras (as in \cite{Sandling2, Johnson}).
 At this time, we may give directly proof for the $p$-groups we will consider.  Our main results are as follows.

\begin{theorem}\label{StrG}
Let  $G$ be a  finite $p$-group given by a
central extension of the form
$$1\longrightarrow N=\mathbb{Z}_{p^n}\times \mathbb{Z}_{p^m} \longrightarrow G \longrightarrow \mathbb{Z}_p\times
\cdots\times \mathbb{Z}_p \longrightarrow 1$$ and $G'=\langle c\rangle\cong
\mathbb{Z}_p$, $n\geq m\geq 1$ and $p>2$. Suppose that  the order of $G/\zeta G$ is  $p^{2k}$ and $d(\zeta G)=r$. Then

(1) $\zeta G=\langle z_1\rangle\times\langle z_2\rangle\times\cdots\times\langle z_r\rangle$ has four isomorphism types $A_i$, $i=1,2,3,4$,

(i) $A_1\cong \mathbb{Z}_{p^n}\times\mathbb{Z}_{p^m}\times\mathbb{Z}_p\times\cdots\times \mathbb{Z}_p$ and $N=\langle z_1\rangle\times\langle z_2\rangle$;

(ii) $A_2\cong \mathbb{Z}_{p^{n+1}}\times\mathbb{Z}_{p^m}\times\mathbb{Z}_p\times\cdots\times \mathbb{Z}_p$ and $N=\langle z_1^p\rangle\times\langle z_2\rangle$;

(iii) $A_3\cong \mathbb{Z}_{p^n}\times\mathbb{Z}_{p^{m+1}}\times\mathbb{Z}_p\times\cdots\times \mathbb{Z}_p$ and $N=\langle z_1\rangle\times\langle z_2^p\rangle$;

(iv) $A_4\cong \mathbb{Z}_{p^{n+1}}\times\mathbb{Z}_{p^{m+1}}\times\mathbb{Z}_p\times\cdots\times \mathbb{Z}_p$ and $N=\langle z_1^p\rangle\times\langle z_2^p\rangle$.

(2) For the isomorphism type $A_1$ and $c\in \langle z_1\rangle$,  the isomorphism types of $G$ are as follows.
\begin{align}\label{1.1}
M_p(n+1,m+1){\mathsf Y}\underbrace{M_p(1,1,1)\Y\cdots \Y M_p(1,1,1)}_{k-1}\times \underbrace{\mathbb{Z}_p\times\cdots\times \mathbb{Z}_p}_{r-2},
\end{align}
\begin{align}\label{1.2}
 M_p(n+1,1)\Y M_p(m+1,1,1)\Y \underbrace{M_p(1,1,1)\Y \cdots \Y M_p(1,1,1)}_{k-2}\times \underbrace{\mathbb{Z}_p\times\cdots\times \mathbb{Z}_p}_{r-2},
\end{align}
\begin{align}\label{1.3}
 M_p(n+1,1)\Y \underbrace{M_p(1,1,1){\mathsf Y}\cdots \Y M_p(1,1,1)}_{k-1}\times \mathbb{Z}_{p^m}\times\underbrace{\mathbb{Z}_p\times\cdots\times \mathbb{Z}_p}_{r-2},
\end{align}
\begin{align}\label{1.4}
 M_p(m+1,1,1)\Y \underbrace{M_p(1,1,1)\Y \cdots \Y M_p(1,1,1)}_{k-1}\Y  \mathbb{Z}_{p^n}\times\underbrace{\mathbb{Z}_p\times\cdots\times \mathbb{Z}_p}_{r-2},
\end{align}
\begin{align}\label{1.5}
 \underbrace{M_p(1,1,1)\Y \cdots \Y M_p(1,1,1)}_{k}\Y  \mathbb{Z}_{p^n}\times\mathbb{Z}_{p^{m}}\times\underbrace{\mathbb{Z}_p\times \cdots\times \mathbb{Z}_p}_{r-2}.
\end{align}
(3) For  the isomorphism type $A_1$ and $c\in \langle z_2\rangle$,  the isomorphism types of $G$ are as follows.
\begin{align}\label{1.6}
M_p(m+1,n+1)\Y \underbrace{M_p(1,1,1)\Y \cdots \Y M_p(1,1,1)}_{k-1}\times \underbrace{\mathbb{Z}_p\times\cdots\times \mathbb{Z}_p}_{r-2},
\end{align}
\begin{align}\label{1.7}
 M_p(m+1,1)\Y M_p(n+1,1,1)\Y \underbrace{M_p(1,1,1)\Y \cdots \Y M_p(1,1,1)}_{k-2}\times \underbrace{\mathbb{Z}_p\times\cdots\times \mathbb{Z}_p}_{r-2},
\end{align}
\begin{align}\label{1.8}
 M_p(m+1,1)\Y \underbrace{M_p(1,1,1)\Y \cdots \Y M_p(1,1,1)}_{k-1}\times \mathbb{Z}_{p^n}\times\underbrace{\mathbb{Z}_p\times\cdots\times \mathbb{Z}_p}_{r-2},
\end{align}
\begin{align}\label{1.9}
 M_p(n+1,1,1)\Y \underbrace{M_p(1,1,1)\Y \cdots \Y M_p(1,1,1)}_{k-1}\Y  \mathbb{Z}_{p^m}\times\underbrace{\mathbb{Z}_p\times\cdots\times \mathbb{Z}_p}_{r-2},
\end{align}
\begin{align}\label{1.10}
 \underbrace{M_p(1,1,1)\Y \cdots \Y M_p(1,1,1)}_{k}\Y  \mathbb{Z}_{p^m}\times\mathbb{Z}_{p^{n}}\times\underbrace{\mathbb{Z}_p\times\cdots\times \mathbb{Z}_p}_{r-2}.
\end{align}
(4) For  the isomorphism type $A_2$ and $c\in \langle z_1\rangle$,  the isomorphism types of $G$ are as follows.
\begin{align}\label{1.11}
 M_p(m+1,1,1)\Y \underbrace{M_p(1,1,1)\Y \cdots \Y M_p(1,1,1)}_{k-1}\Y  \mathbb{Z}_{p^{n+1}}\times\underbrace{\mathbb{Z}_p\times\cdots\times \mathbb{Z}_p}_{r-2},
\end{align}
\begin{align}\label{1.12}
 \underbrace{M_p(1,1,1)\Y \cdots \Y M_p(1,1,1)}_{k}\Y  \mathbb{Z}_{p^{n+1}}\times\mathbb{Z}_{p^{m}}\times\underbrace{\mathbb{Z}_p\times\cdots\times \mathbb{Z}_p}_{r-2}.
\end{align}
(5) For the isomorphism type $A_2$ and $c\in \langle z_2\rangle$,  the isomorphism types of $G$ are as follows.
\begin{align}\label{1.13}
 M_p(m+1,1)\Y \underbrace{M_p(1,1,1)\Y \cdots \Y M_p(1,1,1)}_{k-1}\times \mathbb{Z}_{p^{n+1}}\times\underbrace{\mathbb{Z}_p\times\cdots\times \mathbb{Z}_p}_{r-2},
\end{align}
\begin{align}\label{1.14}
 \underbrace{M_p(1,1,1)\Y \cdots \Y M_p(1,1,1)}_{k}\Y  \mathbb{Z}_{p^m}\times\mathbb{Z}_{p^{n+1}}\times\underbrace{\mathbb{Z}_p\times\cdots\times \mathbb{Z}_p}_{r-2}.
\end{align}
(6) For the isomorphism type $A_3$ and $c\in \langle z_1\rangle$,  the isomorphism types of $G$ are as follows.
\begin{align}\label{1.15}
 M_p(n+1,1)\Y \underbrace{M_p(1,1,1)\Y \cdots \Y M_p(1,1,1)}_{k-1}\times \mathbb{Z}_{p^{m+1}}\times\underbrace{\mathbb{Z}_p\times\cdots\times \mathbb{Z}_p}_{r-2},
\end{align}
\begin{align}\label{1.16}
 \underbrace{M_p(1,1,1)\Y \cdots \Y M_p(1,1,1)}_{k}\Y  \mathbb{Z}_{p^n}\times\mathbb{Z}_{p^{m+1}}\times\underbrace{\mathbb{Z}_p\times\cdots\times \mathbb{Z}_p}_{r-2}.
\end{align}
(7) For the isomorphism type $A_3$ and $c\in \langle z_2\rangle$,  the isomorphism types of $G$ are as follows.
\begin{align}\label{1.17}
 M_p(n+1,1,1)\Y \underbrace{M_p(1,1,1)\Y \cdots \Y M_p(1,1,1)}_{k-1}\Y  \mathbb{Z}_{p^{m+1}}\times\underbrace{\mathbb{Z}_p\times\cdots\times \mathbb{Z}_p}_{r-2},
\end{align}
\begin{align}\label{1.18}
 \underbrace{M_p(1,1,1)\Y \cdots \Y M_p(1,1,1)}_{k}\Y  \mathbb{Z}_{p^{m+1}}\times\mathbb{Z}_{p^{n}}\times\underbrace{\mathbb{Z}_p\times\cdots\times \mathbb{Z}_p}_{r-2}.
\end{align}
(8) For the isomorphism type $A_4$,  the isomorphism type of $G$ is
\begin{align}\label{1.19}
\underbrace{M_p(1,1,1)\Y \cdots \Y M_p(1,1,1)}_{k}\Y  (\mathbb{Z}_{p^{n+1}}\times\mathbb{Z}_{p^{m+1}})\times\underbrace{\mathbb{Z}_p\times\cdots\times \mathbb{Z}_p}_{r-2}.
\end{align}
\end{theorem}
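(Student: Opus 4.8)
The plan is to exploit that $G$ has nilpotency class $2$, to reduce the structural problem to linear algebra over $\mathbb{F}_p$ via the commutator form, and then to reassemble $G$ as a central product of inner-abelian $p$-groups amalgamated over $G'=\langle c\rangle$. First I would record that $G'\le \zeta G$: since $N\le\zeta G$ by hypothesis and $G/N$ is elementary abelian, every commutator is central, so $G$ has class $2$ and $G/\zeta G$, being a quotient of the elementary abelian $G/N$, is itself elementary abelian, of order $p^{2k}$. The commutator then descends to a non-degenerate alternating bilinear form
$$\beta:\ (G/\zeta G)\times(G/\zeta G)\longrightarrow G'\cong\mathbb{F}_p,\qquad \beta(\bar x,\bar y)=[x,y],$$
non-degenerate because its radical is exactly $\zeta G/\zeta G$. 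Hence $G/\zeta G$ is a symplectic $\mathbb{F}_p$-space of dimension $2k$, and I may fix a symplectic basis $\bar x_1,\bar y_1,\dots,\bar x_k,\bar y_k$ with $[x_i,y_i]=c$ and all other basic commutators trivial. In particular non-degeneracy forces $c=[x_i,y_i]$ for at least one pair.

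Next I would pin down $\zeta G$ as an abelian group, giving part (1). Writing $\zeta G=\langle z_1\rangle\times\cdots\times\langle z_r\rangle$ and using $N=\mathbb{Z}_{p^n}\times\mathbb{Z}_{p^m}\le\zeta G$, the four types $A_1$–$A_4$ correspond to whether each of the two tall cyclic factors of $N$ is itself a direct factor of $\zeta G$ or is instead the subgroup of $p$-th powers of a strictly larger cyclic factor. The latter ``capping'' occurs precisely when a central element of order $p^{n+1}$ (respectively $p^{m+1}$) has its $p$-th power generating the $\mathbb{Z}_{p^n}$ (respectively $\mathbb{Z}_{p^m}$) factor of $N$; this is detected from the power/Frattini structure of $G$ relative to $G^p\zeta G$, and the four combinations (neither, first, second, both capped) yield exactly $A_1,A_2,A_3,A_4$.

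With $\zeta G$ fixed and $c$ located, I would normalize the symplectic representatives. Since $c$ has order $p$, it lies in the socle of a single tall factor, giving the dichotomy $c\in\langle z_1\rangle$ versus $c\in\langle z_2\rangle$ in parts (2)–(7) (in type $A_4$ the position is forced, giving (8)). Each $x_i^p,y_i^p$ lies in $\zeta G$, and multiplying $x_i,y_i$ by suitable central elements leaves all commutators unchanged while clearing these $p$-th powers whenever possible; a pair that can be fully cleared generates a Heisenberg group $M_p(1,1,1)$, whereas a pair whose $p$-th power is forced onto a generator of $N$ must be merged with that tall factor to yield a metacyclic inner-abelian group $M_p(a,b)$ or $M_p(a,b,1)$. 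Taking the central product of these pieces over $\langle c\rangle$, and splitting off as a direct abelian factor those generators $z_j$ never used in a commutator (the $\mathbb{Z}_p^{\,r-2}$, together with any surviving tall factor), produces the listed forms; here I would invoke the Rédei classification identifying the inner-abelian $p$-groups for odd $p$ as exactly the $M_p(a,b)$ and $M_p(a,b,1)$.

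The bulk of the work is then the case analysis running over the four types of $\zeta G$ and the two admissible positions of $c$, in each instance determining which pair absorbs which tall factor and reading off the central product. I expect the main obstacle to be precisely this absorption step: proving that the $p$-th power of a symplectic generator can, or cannot, be transferred onto a generator of $N$ by a central correction, and that the resulting inner-abelian factor carries exactly the claimed parameters. The interaction is governed by whether the factor containing $c$ is capped: when $c\in\langle z_1\rangle$ and that factor is uncapped, the $\mathbb{Z}_{p^n}$-factor is swallowed into a metacyclic $M_p(n+1,\cdot)$, while capping (types $A_2,A_3$) or the doubly-capped type $A_4$ leaves the tall factor outside the central product as a direct summand, which is exactly why type $A_4$ collapses to the single form \eqref{1.19}. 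Carefully tracking these alternatives, together with the symmetry interchanging the roles of the two factors when $c$ switches between $\langle z_1\rangle$ and $\langle z_2\rangle$, yields the complete list \eqref{1.1}–\eqref{1.19}.
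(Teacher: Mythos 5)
Your route coincides with the paper's in its first two stages: the commutator form making $G/\zeta G$ a nondegenerate symplectic $\mathbb{F}_p$-space with a symplectic basis $\bar x_1,\bar y_1,\ldots,\bar x_k,\bar y_k$, the resulting decomposition $G=E\Y\zeta G$ with $E$ a central product of inner abelian groups of types $M_p(u,v)$ and $M_p(w,1,1)$, and the four isomorphism types $A_1$--$A_4$ of $\zeta G$ are exactly the paper's Lemmas \ref{STG} and \ref{SCG}. The genuine gap is in your third stage. You propose to normalize each symplectic pair \emph{separately}, multiplying $x_i,y_i$ by central elements to clear $p$-th powers ``whenever possible,'' and to merge a pair with a tall cyclic factor of $N$ when clearing fails. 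Central corrections within a single pair cannot accomplish this. Take type $A_1$ with two pairs $G_1=\langle x_1,y_1\rangle\cong G_2=\langle x_2,y_2\rangle\cong M_p(n+1,m+1)$ and $x_1^{p^n}=x_2^{p^n}=c$ (this configuration does occur; it is case (i) of the paper's Lemma \ref{GGC1}). To turn $G_2$ into a Heisenberg factor you would need $z\in\zeta G$ with $z^p=x_2^{-p}$; but $x_2^p$ has order $p^n$ in $N$, while $(\zeta G)^p=N^p$ has exponent $p^{n-1}$, so no such $z$ exists. Both pairs are ``forced onto'' the same tall factor, only one of them can absorb it, and your toolkit leaves the other pair large.

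The missing idea is the cross-pair rewriting carried out in the paper's Lemmas \ref{GGC1} and \ref{GGC2}: one changes the symplectic basis by mixing distinct hyperbolic planes (for instance replacing $x_2$ by $x_2x_1^{t_1}$ and pairing $x_1$ with $y_2^{-1}x_1^{s}y_1^{t}$, the exponents chosen so that the commutator form values cancel), which yields rewriting rules such as $M_p(n+1,m+1)\Y M_p(n+1,m+1)\cong M_p(n+1,1)\Y M_p(m+1,1,1)$ or $M_p(n+1,m+1)\Y M_p(1,1,1)$, the alternative being decided by congruences mod $p$ among the exponents expressing $x_2^p,y_2^p$ in terms of $x_1^p,y_1^p$. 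These exchange rules are precisely what guarantees that at most one non-Heisenberg factor survives in the final list (or two, in the forms \eqref{1.2} and \eqref{1.7}), which is the entire content of the classification \eqref{1.1}--\eqref{1.19}; without them, your plan terminates with an unreduced central product of many large inner abelian factors. The remaining ingredients of your outline --- the four types of $\zeta G$, the absorption of $N$ into single factors (the paper's Lemmas \ref{CPC1}--\ref{CPC4} and \ref{CPC7}), and the splitting off of the unused generators $z_j$ --- do match the paper and would go through.
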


\begin{theorem} Suppose that $G$ is the finite $p$-group as in Theorem 1.1 and $C$ is defined in Section 4. Let $\mathbb{F}_p$ be the field of $p$ elements, then

(1) $V(\mathbb{F}_pG)^{p^l}=V(\mathbb{F}_pG^{p^l})$, where $l\geq 2$,
$V(\mathbb{F}_pG)^{p}C=V(\mathbb{F}_pG^{p})\times C$.

(2) $G\bigcap V(\mathbb{F}_pG)^{p}=G^p$.

(3) $\Omega_1(V(\mathbb{F}_pG))\leq 1+\triangle(G,\Omega_1(G))$ and
$\Omega_l(V(\mathbb{F}_pG))= 1+\triangle(G,\Omega_l(G))$, where $l\geq 2.$

(4) $\zeta \Omega_l(V(\mathbb{F}_pG))=\Omega_l(\zeta V(\mathbb{F}_pG))=\Omega_l(V(\mathbb{F}_p\zeta G))\times C$, where $l\geq 1$.

\end{theorem}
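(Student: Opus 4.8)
The plan is to extract the structural consequences of Theorem~\ref{StrG} and then reduce every assertion to Sandling's abelian theory through two natural maps. First I would record that, since $G'=\langle c\rangle\cong\mathbb{Z}_p$ is central, $G$ has nilpotency class $2$ and, as $p$ is odd, is a regular $p$-group. Consequently the power map $g\mapsto g^{p^l}$ is well behaved: from $[g^{p^l},h^{p^l}]=[g,h]^{p^{2l}}$ and $|c|=p$ one gets that $G^{p^l}$ is an \emph{abelian} subgroup for every $l\geq1$, while each $\Omega_l(G)$ is a subgroup. Moreover every non-central $g$ has conjugacy class $gG'=\{g,gc,\dots,gc^{p-1}\}$, so $\zeta(\mathbb{F}_pG)$ is spanned by the central group elements together with the class sums $g\hat{c}$, where $\hat{c}=1+c+\cdots+c^{p-1}$ satisfies $(c-1)\hat{c}=0$ and $\hat{c}^2=0$. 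The subgroup $C$ of Section~4 is precisely the group of normalized units built from these class-sum elements, and $\hat{c}^2=0$ will be the reason it splits off as a direct factor.

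The computational engine is the characteristic-$p$ identity $(1+x)^{p^l}=1+x^{p^l}$ for $x\in\triangle(G)$, which turns power maps on $V(\mathbb{F}_pG)$ into the Frobenius map $x\mapsto x^{p^l}$ on the augmentation ideal. For part~(1) I would prove the two inclusions separately. To show $V(\mathbb{F}_pG)^{p^l}\subseteq V(\mathbb{F}_pG^{p^l})$ it suffices to check that $x^{p^l}$ is supported on $G^{p^l}$; the non-commutative corrections are governed by Jacobson's formula $(x+y)^{p}=x^{p}+y^{p}+\sum_{i=1}^{p-1}s_i(x,y)$, whose Lie terms $s_i$ land in the central ideal $\mathbb{F}_pG(c-1)$, and since $c-1$ is central with $(c-1)^p=0$, taking one further $p$-th power annihilates them. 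This is exactly why the clean equality holds for $l\geq2$, while at $l=1$ one only obtains $V(\mathbb{F}_pG)^{p}\subseteq V(\mathbb{F}_pG^{p})\times C$, the leftover $s_i$-terms being absorbed into $C$. The reverse inclusion, writing each normalized unit of the \emph{abelian} algebra $\mathbb{F}_pG^{p^l}$ as a product of $p^l$-th powers, is Sandling's theorem applied to the abelian group $G^{p^l}$. Together these give $V(\mathbb{F}_pG)^{p^l}=V(\mathbb{F}_pG^{p^l})$ for $l\geq2$ and $V(\mathbb{F}_pG)^{p}C=V(\mathbb{F}_pG^{p})\times C$.

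Parts~(2)--(4) then follow formally. For (2) I would combine (1) with the linear independence of group elements, which gives $G\cap\mathbb{F}_pH=H$ for any subgroup $H$; since $G\cap C=1$ and $V(\mathbb{F}_pG)^{p}\subseteq V(\mathbb{F}_pG^{p})\times C$, intersecting with $G$ yields $G\cap V(\mathbb{F}_pG)^{p}=G\cap V(\mathbb{F}_pG^{p})=G^{p}$. For (3), $u=1+x$ satisfies $u^{p^l}=1$ iff $x^{p^l}=0$, and passing to $\mathbb{F}_p[G/\Omega_l(G)]$ together with Sandling identifies $\{x:x^{p^l}=0\}$ with $\triangle(G,\Omega_l(G))$ for $l\geq2$, the same commutator corrections forcing only the inclusion $\Omega_1(V(\mathbb{F}_pG))\leq1+\triangle(G,\Omega_1(G))$ at $l=1$. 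For (4) I would use $\zeta V(\mathbb{F}_pG)=V(\zeta\mathbb{F}_pG)$ and the vector-space decomposition of $\zeta\mathbb{F}_pG$ into $\mathbb{F}_p\zeta G$ and the span of the class sums $g\hat{c}$; taking $\Omega_l$ and invoking $\hat{c}^2=0$ to split the class-sum part off as $C$ gives $\zeta\Omega_l(V(\mathbb{F}_pG))=\Omega_l(\zeta V(\mathbb{F}_pG))=\Omega_l(V(\mathbb{F}_p\zeta G))\times C$.

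The main obstacle is the second paragraph: controlling the Frobenius map on the non-commutative ring $\mathbb{F}_pG$. Everything hinges on showing that the Jacobson correction terms $s_i(x,y)$ are exactly accounted for, namely that they vanish after an extra $p$-th power (yielding the clean equalities for $l\geq2$) and that at $l=1$ they generate precisely the class-sum subgroup $C$ and nothing more. Verifying that no further terms intrude requires the explicit central-product form from Theorem~\ref{StrG}, so that the support of each $x^{p}$ can be tracked factor by factor; this bookkeeping, rather than any single inclusion, is where the real work lies.
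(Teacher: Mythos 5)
Your treatment of parts (1) and (2) is essentially the paper's own argument (Jacobson/Brauer correction terms lying in a central ideal, annihilated by one further $p$-th power), and the $l\geq 2$ computation is sound; but two details are misstated. The reverse inclusion $V(\mathbb{F}_pG^{p^l})\subseteq V(\mathbb{F}_pG)^{p^l}$ is not ``Sandling applied to the abelian group $G^{p^l}$'': that theorem yields $V(\mathbb{F}_pG^{p^l})^{p^j}=V(\mathbb{F}_pG^{p^{l+j}})$, which is not what is needed. What one actually uses is that $g\mapsto g^{p^l}$ is a homomorphism of $G$ onto $G^{p^l}$ (class $2$, $p$ odd) together with your Frobenius formula, or alternatively Sandling applied to the abelian group $G^{p}$ combined with $C^p=1$. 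Similarly in (2), the fact $G\cap C=1$ is not the one required: from $x=yz$ with $y\in V(\mathbb{F}_pG^{p})$, $z\in C$ you cannot intersect componentwise, since neither $y$ nor $z$ is known to lie in $G$. One must first observe that $x$ is central (both factors are), hence $x\in\zeta G$, so that $z=y^{-1}x\in V(\mathbb{F}_p\zeta G)\cap C$; the fact needed is that this intersection is trivial, i.e. the directness in the paper's decomposition $\zeta V(\mathbb{F}_pG)=V(\mathbb{F}_p\zeta G)\times C$ (Lemma 4.2).

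The genuine gaps are in (3) and (4). In (3), ``passing to $\mathbb{F}_p[G/\Omega_l(G)]$'' cannot identify $\{x:x^{p^l}=0\}$ with $\triangle(G,\Omega_l(G))$: if $\pi$ denotes the quotient map, then $u^{p^l}=1$ only gives $\pi(u)^{p^l}=1$, and the unit group $V(\mathbb{F}_p[G/\Omega_l(G)])$ has abundant $p^l$-torsion, so $\pi(u)=1$ does not follow. The paper's actual argument uses the coefficient formula $u^{p^l}=\sum_g u_g g^{p^l}$ (for $l=1$ this requires the extra step that the correction term lies in $I$ and $V(\mathbb{F}_p\zeta G)\cap C=1$, so that $\sum_g u_g g^{p}=1$ holds exactly) and then a transversal argument: distinct cosets of $\Omega_l(G)$ have distinct $p^l$-th powers, so linear independence of group elements forces the coset sums of coefficients to vanish, which is precisely what places $u$ in $1+\triangle(G,\Omega_l(G))$. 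In (4) you assert $\zeta\Omega_l(V(\mathbb{F}_pG))=\Omega_l(\zeta V(\mathbb{F}_pG))$ as if it were formal, but only the inclusion $\Omega_l(\zeta V(\mathbb{F}_pG))\leq\zeta\Omega_l(V(\mathbb{F}_pG))$ is formal. The converse --- that a unit commuting with everything in $\Omega_l(V(\mathbb{F}_pG))$ must be central in all of $V(\mathbb{F}_pG)$ --- is the hardest result in the paper's Section 4. The paper proves it by writing $\alpha\in\zeta\Omega_l(V(\mathbb{F}_pG))$ via part (3), then conjugating by elements $\beta\in\Omega_l(G)$ and by the unipotents $1+\rho(c-1)$, $1+\theta(c-1)\in\Omega_1(V(\mathbb{F}_pG))$; it is exactly here that the explicit central-product decomposition of Theorem 1.1 enters, in choosing the transversal elements $q_i$ inside factors of exponent greater than $p^l$ so that they commute with $\beta$. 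This forces the coefficients of $\alpha$ to be constant on conjugacy classes and lands $\alpha$ in $(1+\triangle(\zeta G,\Omega_l(\zeta G)))\times C$. Your proposal contains no substitute for this conjugation argument, so part (4) remains unproven.
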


\section{Preliminaries}

To avoid confusion, we will explain some notations.
$G$ is a central product of its subgroups $H$ and $K$ if $G=HK$, $[H,K]=1$,  we write $ G=H \Y K$.
For $x=\sum\limits_{g\in G}\alpha_gg\in FG$, let $\mathrm{supp}(x)$ denote the support of $x\in FG$, namely,
$\mathrm{supp}(x)=\{g\in G\ |\ \alpha_g\neq 0\}.$
For any subset $S$ of $G$, we define $\widehat{S}:=\sum\limits_{g\in S}g$. We denote by $d(G)$ the number of elements in the minimal sets
of generators of $G$. Other notations used are standard (as in \cite{XuQ,Robinson}).

\begin{lemma}[\cite{XuQ}] \label{INA}
Suppose that  $G$ is  a finite $p$-group, then the following properties are equivalent:

(i) $G$ is an inner abelian group;

(ii) $d(G)=2$ and $|G'|=p$;

(iii) $d(G)=2$ and $\zeta G=\mathrm{Frat} (G)$.

\end{lemma}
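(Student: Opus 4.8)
The plan is to prove the three conditions equivalent via the cycle (i)$\Rightarrow$(ii)$\Rightarrow$(iii)$\Rightarrow$(i). Throughout I would lean on the standard facts for a finite $p$-group: that $\mathrm{Frat}(G)=G'G^p$, that $d(G)=\dim_{\mathbb{F}_p}\bigl(G/\mathrm{Frat}(G)\bigr)$, that every maximal subgroup is normal of index $p$ and contains $\mathrm{Frat}(G)$, that every proper subgroup lies in some maximal subgroup, and that a $p$-group whose central quotient $G/\zeta G$ is cyclic must be abelian.

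For (i)$\Rightarrow$(ii), the equality $d(G)=2$ comes for free: choosing non-commuting $x,y$, the subgroup $\langle x,y\rangle$ is non-abelian, hence cannot be proper, so $G=\langle x,y\rangle$; as $G$ is non-abelian it is non-cyclic, whence $d(G)=2$. To pin down $G'$ I would first show $\mathrm{Frat}(G)\leq\zeta G$: given $w\in\mathrm{Frat}(G)$ and $g\in G$, the (proper) subgroup $\langle g\rangle$ lies in some maximal subgroup $M$, which is abelian and contains $\mathrm{Frat}(G)\ni w$, so $w$ commutes with $g$. Then $G/\zeta G$ is a non-trivial, non-cyclic quotient of $G/\mathrm{Frat}(G)\cong\mathbb{Z}_p\times\mathbb{Z}_p$, forcing $|G/\zeta G|=p^2$ and class $2$; consequently $G'=\langle[x,y]\rangle$ and $[x,y]^p=[x^p,y]=1$ since $x^p\in\mathrm{Frat}(G)\leq\zeta G$, giving $|G'|=p$.

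For (ii)$\Rightarrow$(iii), since $G'\cong\mathbb{Z}_p$ is normal and $p\nmid|\mathrm{Aut}\,\mathbb{Z}_p|=p-1$, conjugation on $G'$ is trivial, so $G'\leq\zeta G$ and $G$ has class $2$. The class-$2$ identity then yields $[g^p,h]=[g,h]^p=1$, so $G^p\leq\zeta G$ and hence $\mathrm{Frat}(G)=G'G^p\leq\zeta G$. Because $d(G)=2$ we have $[G:\mathrm{Frat}(G)]=p^2$, so $\zeta G/\mathrm{Frat}(G)$ is a subgroup of $\mathbb{Z}_p\times\mathbb{Z}_p$; it cannot have order $p^2$ (else $G$ is abelian) nor order $p$ (else $[G:\zeta G]=p$, again forcing $G$ abelian), so $\zeta G=\mathrm{Frat}(G)$. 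For (iii)$\Rightarrow$(i), $G$ is non-abelian because $\mathrm{Frat}(G)$ is always proper while $\zeta G=\mathrm{Frat}(G)$; and any proper subgroup lies in a maximal subgroup $M$ with $[M:\zeta G]=[G:\mathrm{Frat}(G)]/p=p$, so $M/\zeta G$ is cyclic and $M$ is abelian. Thus every proper subgroup is abelian, i.e. $G$ is inner abelian.

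I expect the only genuinely delicate point to be the quantitative step inside (i)$\Rightarrow$(ii): upgrading "$G'$ is cyclic" to "$|G'|=p$" through $[x,y]^p=[x^p,y]=1$, which uses both the class-$2$ conclusion and the containment $x^p\in\mathrm{Frat}(G)\leq\zeta G$. Everything else reduces to comparing the indices $[G:\mathrm{Frat}(G)]$ and $[G:\zeta G]$ and invoking the elementary principle that a cyclic central quotient forces abelianness.
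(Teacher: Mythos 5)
Your proof is correct. Note, however, that the paper does not prove this lemma at all: it is quoted as a known result from the Xu--Qu textbook on finite $p$-groups (reference \cite{XuQ}), so there is no proof in the paper to compare against. Your cycle (i)$\Rightarrow$(ii)$\Rightarrow$(iii)$\Rightarrow$(i) is the standard self-contained argument, and each step checks out: the containment $\mathrm{Frat}(G)\leq\zeta G$ for a minimal non-abelian group via abelian maximal subgroups; the upgrade from ``$G'$ cyclic'' to $|G'|=p$ via the class-$2$ identity $[x,y]^p=[x^p,y]=1$ with $x^p\in\mathrm{Frat}(G)\leq\zeta G$; the index comparison $\zeta G/\mathrm{Frat}(G)\leq\mathbb{Z}_p\times\mathbb{Z}_p$ ruling out orders $p$ and $p^2$ by the cyclic-central-quotient principle; and finally, in (iii)$\Rightarrow$(i), the observation that each maximal subgroup $M$ satisfies $[M:\zeta G]=p$ with $\zeta G\leq\zeta(M)$, hence is abelian. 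The one point worth stating explicitly (you use it implicitly) is that in (iii)$\Rightarrow$(i) the cyclic quotient $M/\zeta G$ forces $M$ abelian because $\zeta G$, being central in $G$, is central in $M$ — the principle applies to $M$ with respect to a central subgroup of $M$, not literally to $M/\zeta(M)$ until that containment is noted.
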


\begin{lemma}[\cite{XuQ}] \label{INA1}
Suppose that  $G$ is an inner abelian finite $p$-group, then $G$ is one of the following types:

(i) $Q_8$, the quaternion group;

(ii) $M_p(n,m)=\langle a,b\ |\ a^{p^n}=b^{p^m}=1,a^b=a^{1+p^{n-1}}\rangle$,  $n\geq 2, m\geq 1$;

(iii) $M_p(n,m,1)=\langle a,b,c\ |\ a^{p^n}=b^{p^m}=c^p=1,[a,b]=c,[c,a]=[c,b]=1\rangle$,  $n\geq  m\geq 1$.

\end{lemma}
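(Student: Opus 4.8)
The plan is to prove the classification by reducing, via Lemma \ref{INA}, to a tightly controlled two-generator presentation and then normalizing the defining relations. First I would record the structural consequences of $G$ being inner abelian: by Lemma \ref{INA} we have $d(G)=2$, so write $G=\langle a,b\rangle$, and $|G'|=p$. Since $G'\cong\mathbb{Z}_p$ is normal and $G$ acts on it through a homomorphism into $\mathrm{Aut}(\mathbb{Z}_p)\cong\mathbb{Z}_{p-1}$, whose order is prime to $p$, the action is trivial; hence $G'\leq\zeta G$ and $G$ has nilpotency class exactly $2$. Thus $c:=[a,b]$ generates $G'$, is central, and satisfies $c^p=1$. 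The equality $\zeta G=\mathrm{Frat}(G)$ from Lemma \ref{INA}(iii) gives $G/\zeta G\cong\mathbb{Z}_p\times\mathbb{Z}_p$, so $a^p,b^p$ are central.

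Second, I would exploit the class-$2$ commutator calculus. Bilinearity gives $[a^ib^j,a^kb^l]=c^{il-jk}$, and the power formula $(a^\alpha b^\beta)^{t}=a^{t\alpha}b^{t\beta}c^{-\alpha\beta\binom{t}{2}}$ holds. The decisive arithmetic fact is that for odd $p$ and $s\geq 1$ one has $p\mid\binom{p^s}{2}$, so the $p^s$-th power map behaves additively on exponents; this is exactly the point that fails at $p=2$, $s=1$, where $\binom{2}{2}=1$. Using the structure theorem on the abelianization, choose $a,b$ lifting a basis of $G/G'\cong\mathbb{Z}_{p^s}\times\mathbb{Z}_{p^t}$ with $s\geq t\geq 1$; then $a^{p^s}=c^i$ and $b^{p^t}=c^j$ for some $i,j\in\{0,\dots,p-1\}$, since $a^{p^s},b^{p^t}\in G'$.

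Third comes the case analysis, which is the heart of the argument. If $i=j=0$, then $\langle a\rangle$ and $\langle b\rangle$ meet $G'$ trivially (a power of $a$ lying in $G'$ would be trivial in $G/G'$), so $c$ is an independent central generator and the presentation is $M_p(s,t,1)$, type (iii). If $(i,j)\neq(0,0)$, I would first replace $b$ by a suitable power $b^v$ to arrange $a^{p^s}=[a,b]=c$ (so $|a|=p^{s+1}$ and $c=a^{p^{s}}\in\langle a\rangle$), and then replace $b$ by $b\,a^{-jp^{s-t}}$ to kill the power $b^{p^t}$; here the vanishing of $c^{-\alpha\beta\binom{p^t}{2}}$ for odd $p$ is precisely what makes $(b')^{p^t}=1$ while leaving $[a,b]=c$ unchanged. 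Relabeling $n=s+1\geq 2$, $m=t$ yields $a^{p^n}=b^{p^m}=1$ and $a^b=a^{1+p^{n-1}}$, i.e. type (ii) $M_p(n,m)$ (swapping the roles of $a$ and $b$ if the nontrivial power sits on $b$). Finally, for the excluded prime $p=2$ with $s=t=1$ and $i=j=1$, the obstruction $c^{-\alpha\beta\binom{2}{2}}\neq 1$ cannot be removed by any substitution $b\mapsto ba^k$, and one is left with $a^2=b^2=[a,b]$, namely $Q_8$, type (i). To close, I would verify the converse: each listed group has $d=2$ and $|G'|=p$, hence is inner abelian by Lemma \ref{INA}.

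The main obstacle I anticipate is the normalization in the third step: tracking how the relations $a^{p^s}=c^i$ and $b^{p^t}=c^j$ transform under the admissible generator changes $a\mapsto a^\alpha b^\beta$, $b\mapsto a^\gamma b^\delta$ (with invertible exponent matrix modulo $p$) together with multiplication by central $p$-th powers, and verifying that for odd $p$ the pair $(i,j)$ can always be driven to $(0,0)$ or $(1,0)$ while for $p=2$ an irreducible residue survives as $Q_8$. All of this rests on the single divisibility $p\mid\binom{p^s}{2}$, so the careful bookkeeping of these binomial coefficients modulo $p$ is where the real work lies.
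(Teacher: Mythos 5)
The paper does not prove this lemma at all: it is quoted, with its label, from the textbook \cite{XuQ} (it is R\'edei's classical classification of minimal non-abelian $p$-groups), so there is no in-paper proof to compare yours against. Judged on its own, your normalization argument is the standard proof of this result and is essentially correct: the reduction via Lemma \ref{INA} to a two-generator class-$2$ group with central $G'=\langle c\rangle$ of order $p$, the identities $[a^ib^j,a^kb^l]=c^{il-jk}$ and $(a^{\alpha}b^{\beta})^{t}=a^{t\alpha}b^{t\beta}c^{-\alpha\beta\binom{t}{2}}$, and the pivot on $p\mid\binom{p^t}{2}$ for odd $p$ are exactly the right ingredients, and your case split on $(i,j)$ with the substitutions $b\mapsto b^{v}$ and $b\mapsto ba^{-j'p^{s-t}}$ does drive the relations to the three normal forms.

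Three small glosses are worth tightening, none of them a genuine gap. First, when you conclude ``the presentation is $M_p(s,t,1)$'' (and likewise $M_p(s+1,t)$), you should close with the order count: $G$ is an epimorphic image of the presented group and $|G|=|G/G'|\cdot|G'|=p^{s+t+1}$ equals the order of $M_p(s,t,1)$ (resp.\ $M_p(s+1,t)$), so the epimorphism is an isomorphism. Second, your $p=2$ discussion treats only $s=t=1$, $i=j=1$; to see this is the \emph{only} surviving obstruction, note the correction exponent is $-j'2^{s-t}\binom{2^t}{2}=-j'2^{s-1}(2^t-1)\equiv j'2^{s-1}\pmod 2$, which vanishes whenever $s\geq 2$, so for $s\geq2$ the odd-$p$ normalization still works at $p=2$ and only $s=t=1$ with (after a swap) $i=j=1$ is irreducible; there one checks $(ab)^2=c\neq 1$, so every non-central element has order $4$ and the group is $Q_8$, not $D_8$. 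Third, you implicitly use that $G/G'$ has rank exactly $2$: this follows since $G'\leq\mathrm{Frat}(G)$, so a cyclic $G/G'$ would force $G$ cyclic, contradicting non-abelianness. With these points made explicit, your proof is complete and matches the argument in \cite{XuQ}.
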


For any ring $R$, we denote by $L_n(R)$ the $n$th Lie ideal of
$R$ generated by all the Lie elements $(r_1,\ldots,r_n)$, $r_i\in R$, where $(r_1)=r_1, (r_1,r_2)=r_1r_2-r_2r_1$ and,
inductively, $$(r_1,\ldots,r_{n-1},r_n)=((r_1,\ldots,r_{n-1}),r_n).$$
The $n$th Lie ideal $L_n(R)$ of $R$ and the $n$th term $\gamma_n(U(R))$ of the lower central series of the unit group $U(R)$ of $R$  have very close relationship as follows.

\begin{lemma}[\cite{Gupta}] Suppose that  $R$ is  a ring with identity 1 and $U(R)$ is the group of units of $R$. For each $n\geq1$, $\gamma_n(U(R))\leq 1+L_n(R)$.
\end{lemma}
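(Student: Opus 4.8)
The plan is to argue by induction on $n$, reducing the containment to a purely ring-theoretic statement about the Lie ideals $L_n(R)$; throughout I write $\ell_n$ for the set of Lie elements $(r_1,\dots,r_n)$ of weight $n$, so that $L_n(R)$ is the two-sided ideal they generate. The base case $n=1$ is immediate, since $L_1(R)=R$ forces $1+L_1(R)\supseteq U(R)=\gamma_1(U(R))$. For the inductive step, the first observation is that $H_{n+1}:=(1+L_{n+1}(R))\cap U(R)$ is a normal subgroup of $U(R)$: it is a subgroup because $L_{n+1}(R)$ is an ideal, so $1+L_{n+1}(R)$ is closed under multiplication and the inverse of a unit $1+a$ with $a\in L_{n+1}(R)$ is again of the form $1+b$, $b=-a-ab\in L_{n+1}(R)$; and it is normal because $g(1+a)g^{-1}=1+gag^{-1}$ with $gag^{-1}\in L_{n+1}(R)$. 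Since $\gamma_{n+1}(U(R))=[\gamma_n(U(R)),U(R)]$ is generated by the commutators $[x,u]$ with $x\in\gamma_n(U(R))$ and $u\in U(R)$, it suffices to prove that each such $[x,u]$ lies in $H_{n+1}$.

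The key computation is the identity $xu=ux[x,u]$, which yields $[x,u]-1=(ux)^{-1}(xu-ux)$. By the inductive hypothesis $x\in 1+L_n(R)$, so I may write $x=1+a$ with $a\in L_n(R)$; then $xu-ux=au-ua=(a,u)$, whence $[x,u]-1=(ux)^{-1}(a,u)$. Because $L_{n+1}(R)$ is an ideal, the entire inductive step therefore collapses to the single Lie-ideal inclusion
\[
[L_n(R),R]\subseteq L_{n+1}(R);
\]
equivalently, $(a,u)\in L_{n+1}(R)$ for every $a\in L_n(R)$ and $u\in R$.

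To prove this inclusion I would exploit that, for fixed $u$, the map $\mathrm{ad}_u\colon r\mapsto (r,u)=ru-ur$ is a derivation, i.e. $(rs,u)=r(s,u)+(r,u)s$. Applying it to a typical generator $xwy$ of the ideal $L_n(R)$, with $w=(r_1,\dots,r_n)\in\ell_n$ and $x,y\in R$, gives $(xwy,u)=(x,u)wy+x(w,u)y+xw(y,u)$. The middle term is harmless, since $(w,u)=(r_1,\dots,r_n,u)$ is again a left-normed Lie element, so $(w,u)\in\ell_{n+1}$ and $x(w,u)y\in L_{n+1}(R)$. The two outer terms, however, are products of a weight-two Lie element $(x,u)$ or $(y,u)$ with an element of $L_n(R)$, so placing them in $L_{n+1}(R)$ requires the multiplicative estimates $L_2(R)L_n(R)\subseteq L_{n+1}(R)$ and $L_n(R)L_2(R)\subseteq L_{n+1}(R)$, which are the relevant instances of the Lie-ideal calculus $L_m(R)L_n(R)\subseteq L_{m+n-1}(R)$ and $[L_m(R),L_n(R)]\subseteq L_{m+n}(R)$.

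Establishing this calculus is the main obstacle. The natural route is a simultaneous induction on the total weight $m+n$. The product bound $L_mL_n\subseteq L_{m+n-1}$ follows cleanly from the bracket bound one weight lower via the identity $[w',r]c=[w',rc]-r[w',c]$ (writing $w=(w',r)$), which rewrites the product $wc$ through brackets $[w',\,\cdot\,]\in[L_{m-1},L_n]$; and the bracket bound is pushed down in weight by the Jacobi identity $[[w',r],d]=[[w',d],r]+[w',[r,d]]$. The delicate point—and the reason a naive induction stalls—is that the product estimate at weight $n{+}1$ and the inclusion $[L_n,R]\subseteq L_{n+1}$ feed into each other at the \emph{same} weight, so the two families of inequalities must be proved together in one carefully ordered induction, expanding left-normed brackets of products such as $[[p_1p_2,r],s]$ and collecting the desired monomials modulo terms already known to lie in the appropriate $L_k$. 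This is exactly the content of the Lie-ideal calculus of \cite{Gupta}; granting it, the outer terms above lie in $L_{n+1}(R)$, the inclusion $[L_n(R),R]\subseteq L_{n+1}(R)$ follows, and with it the inductive step $\gamma_{n+1}(U(R))\subseteq 1+L_{n+1}(R)$.
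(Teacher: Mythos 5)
The paper offers no proof of this lemma at all --- it is quoted verbatim from Gupta--Levin \cite{Gupta} --- so the only question is whether your blind argument is itself correct, and it is not: the statement to which you reduce the whole inductive step is false. Your step uses nothing about $x\in\gamma_n(U(R))$ beyond $x=1+a$ with $a\in L_n(R)$, and therefore needs the general inclusion $[L_n(R),R]\subseteq L_{n+1}(R)$, which you in turn propose to extract from the ``calculus'' $L_mL_n\subseteq L_{m+n-1}$ and $[L_m,L_n]\subseteq L_{m+n}$. Already at $n=2$ all of this fails. Let $E=\Lambda(e_1,e_2,e_3,e_4)$ be the Grassmann algebra on four generators over $\mathbb{Q}$ (or over $\mathbb{F}_p$, $p$ odd). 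Writing elements as even plus odd parts, the even part of $E$ is central and $(r_1,r_2)=(b_1,b_2)=2b_1b_2$ is even; hence every commutator is central, every left-normed weight-$3$ Lie element vanishes, and $L_3(E)=0$. Nevertheless $(e_1,e_2)(e_3,e_4)=4e_1e_2e_3e_4\neq 0$, so $L_2(E)L_2(E)\not\subseteq L_3(E)$, and $\bigl((e_1,e_2)e_3,e_4\bigr)=4e_1e_2e_3e_4\neq 0$ with $(e_1,e_2)e_3\in L_2(E)$, so $[L_2(E),E]\not\subseteq L_3(E)$. Thus the two families of inclusions you hope to prove ``together in one carefully ordered induction'' are not delicate but false (at $m=n=2$, resp.\ $m=1$, $n=2$); what is true in general is strictly weaker (for instance the product rule when one of the weights is odd), and this failure is exactly the Feigin--Shoikhet phenomenon: in the free algebra $A$ the quotient $A/L_3(A)$ is the algebra of even differential forms under the Fedosov product, in which products of two commutators survive as nonzero $4$-forms.

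The same example shows the defect is structural rather than a missing lemma: the hypothesis ``$x\in 1+L_n(R)$'' is genuinely too weak to propagate. In $E$ over $\mathbb{F}_p$ ($p$ odd), $x=1+e_1e_2e_3$ lies in $U(E)\cap(1+L_2(E))$ and $u=1+e_4\in U(E)$, yet a direct computation gives $[x,u]=1+2e_1e_2e_3e_4\neq 1$, so $[x,u]\notin 1+L_3(E)=\{1\}$. This does not contradict the lemma --- which here asserts the true but nonobvious fact that $U(E)$ is nilpotent of class $\le 2$ --- because $x$ is \emph{not} a product of group commutators of units; it only contradicts your inductive step, which cannot distinguish $x$ from an element of $\gamma_2(U(E))$. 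Any correct proof, including Gupta and Levin's, must therefore carry a stronger induction hypothesis that remembers the multiplicative shape of the elements of $\gamma_n(U(R))$ (products of commutators $[v,u]$ with $v-1$ itself of commutator form), rather than just their membership in $1+L_n(R)$. The purely ideal-theoretic reduction made in your second paragraph is precisely the point where the argument breaks, and no refinement of the Lie-ideal calculus can repair it.
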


\begin{lemma}[\cite{Brauer}] \label{Brauer}

Suppose $RG$ is the group ring of a group $G$ over the commutative ring $R$ and  $C_g$ is the conjugacy class of $G$ containing $g$.
Let $[RG,RG]$ be an $R$-submodule of $RG$ generated by all elements of the form $(x,y):=xy-yx$ with $x,y\in RG$ and set $[RG,RG]_p:=[RG,RG]+pRG$,
where $p$ is a prime number. The following conditions hold:

(i) $\sum\limits_{g\in G}\alpha_gg$ is in $[RG,RG]$ if and only if $\sum\limits_{g\in C_h}\alpha_g=0$ for all $h\in G$;

(ii) $\sum\limits_{g\in G}\alpha_gg$ is in $[RG,RG]_p$ if and only if $\sum\limits_{g\in C_h}\alpha_g\in pR$ for all $h\in G$;

(iii) $(x+y)^{p}\equiv x^{p}+y^{p} \pmod{[RG,RG]_p}$ for any $x,y\in RG$;

(iv) the $R$-module $[RG,RG]_p$ contains the $p$-th power of all of its elements.

\end{lemma}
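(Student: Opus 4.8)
The plan is to prove the four parts in order, letting (i) do the structural work and letting (ii)--(iv) build on it; this is essentially the classical Brauer argument. For (i), I would first reduce the generating set: since the bracket $(x,y)=xy-yx$ is $R$-bilinear, $[RG,RG]$ is the $R$-span of the elements $gh-hg$ with $g,h\in G$. The decisive observation is that $hg=g^{-1}(gh)g=(gh)^{g}$, so every generator has the shape $u-u^{g}$ with $u=gh$; conversely $g-g^{x}=x\cdot x^{-1}g-x^{-1}g\cdot x\in[RG,RG]$. Hence $[RG,RG]$ is exactly the $R$-span of all differences $u-u^{x}$ of conjugate group elements. Splitting $RG=\bigoplus_{C}RC$ over the conjugacy classes $C$ and noting that, within one class, the differences $c_i-c_1$ span precisely the submodule of coefficient-sum zero, I obtain that $\sum_g\alpha_g g\in[RG,RG]$ iff $\sum_{g\in C_h}\alpha_g=0$ for every $h$.

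For (ii) I would introduce the class-sum map $\sigma\colon RG\to\bigoplus_{C}R$ sending $\sum_g\alpha_g g$ to the vector of partial sums $\big(\sum_{g\in C}\alpha_g\big)_C$; by (i) its kernel is $[RG,RG]$, and it is clearly surjective. Then $\sigma([RG,RG]_p)=\sigma(pRG)=\bigoplus_C pR$, and since $\ker\sigma=[RG,RG]\subseteq[RG,RG]_p$, a submodule containing the kernel equals the preimage of its image, so $z\in[RG,RG]_p$ iff $\sigma(z)\in\bigoplus_C pR$, which is exactly the assertion $\sum_{g\in C_h}\alpha_g\in pR$ for all $h$.

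Part (iii) is the Lie-theoretic heart. I would expand $(x+y)^p$ as the sum of all $2^p$ length-$p$ words in the letters $x,y$, and let the cyclic group of order $p$ act by rotating the positions. Because $p$ is prime, every word is either constant (giving $x^p$ or $y^p$) or lies in an orbit of full size $p$. The key identity is that a word and its rotation differ by a ring commutator, $a_1a_2\cdots a_p-a_2\cdots a_p a_1=(a_1,\,a_2\cdots a_p)\in[RG,RG]$, so all rotations of a word are congruent modulo $[RG,RG]$, and the sum over a full orbit is $\equiv p\,w_0\in pRG\subseteq[RG,RG]_p$. Collecting the two constant words then gives $(x+y)^p\equiv x^p+y^p\pmod{[RG,RG]_p}$.

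Finally, for (iv) --- which I expect to be the main obstacle, since it must combine all of the above with a separate freshman's-dream estimate inside $R$ --- I would take $z=\sum_g\beta_g g\in[RG,RG]_p$ and first iterate (iii) (by induction on the number of terms) to get $z^p\equiv\sum_g(\beta_g g)^p=\sum_g\beta_g^p g^p\pmod{[RG,RG]_p}$. By (ii) it then suffices to show $\sum_{g:\,g^p\in C_h}\beta_g^p\in pR$ for every $h$. The crucial structural remark is that conjugation commutes with the $p$-th power map, so $\{g:g^p\in C_h\}$ is a union of conjugacy classes; on each such class $C$ one has, using that $R$ is commutative, $\sum_{g\in C}\beta_g^p\equiv\big(\sum_{g\in C}\beta_g\big)^p\pmod{pR}$, while $\sum_{g\in C}\beta_g\in pR$ by (ii), so the right-hand side lies in $p^pR\subseteq pR$. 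Summing over the classes comprising $\{g:g^p\in C_h\}$ yields the required divisibility, hence $z^p\in[RG,RG]_p$. The care needed here is precisely in justifying the two interlocking freshman's-dream statements --- one in the noncommutative ring $RG$ modulo $[RG,RG]_p$ (from (iii)) and one in the commutative ring $R$ modulo $pR$ --- together with the observation that the fibres of the $p$-th power map are conjugation-invariant.
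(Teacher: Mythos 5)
Your proof is correct and complete: the paper itself gives no proof of this lemma (it is quoted as a known result of Brauer), and your argument --- the reduction of $[RG,RG]$ to differences of conjugate group elements for (i), the class-sum map with kernel correspondence for (ii), the cyclic-rotation orbit count for (iii), and the two interlocking freshman's-dream congruences (in $RG$ modulo $[RG,RG]_p$ and in $R$ modulo $pR$, using that the fibres of $g\mapsto g^p$ are unions of conjugacy classes) for (iv) --- is exactly the classical one. Nothing needs fixing.
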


\begin{definition}
Suppose that $R$ is a ring. For a subgroup $H$ of $G$, we  denote by $\triangle_R(G,H)$ the
left ideal of $RG$ generated by the set $\{ h-1\ |\ h\in H\}$, that is,
$$\triangle_R(G,H)=\left\{ \sum\limits_{h\in H}\alpha_h(h-1)\ |\ \alpha_h\in RG\right\}.$$
\end{definition}
 While working with a fixed ring $R$ we shall omit the subscript and denote this ideal simply by
 $\triangle(G,H)$. Obviously, $\triangle(G,H)$ is an ideal of $RG$ if $H\unlhd G$. Notice that the ideal
$\triangle(G,G)$ coincides with the kernel $\triangle(G)$  of the augmentation mapping of the group ring $RG$.

To give a better description of $\triangle_R(G,H)$, let us denote by $\mathcal{T}=\{q_1,q_2,\ldots,q_t\}$, $t=|G/H|$,  a complete
set of representatives of left cosets of  $H$ in $G$. Then the basis of  $\triangle_R(G,H)$  is as follows.

\begin{lemma}[\cite{Milies}]\label{subbas}
The set $\{ q(h-1)\ |\ q\in \mathcal{T},h\in H,h\neq 1\}$ is a basis of $\triangle_R(G,H)$ over $R$.
\end{lemma}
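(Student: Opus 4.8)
The plan is to prove the two defining properties of a basis—spanning and $R$-linear independence—by exploiting the coset decomposition $G=\bigsqcup_{i=1}^{t}q_iH$ together with the fact that $RG$ is a free $R$-module whose basis is the set of group elements. Throughout I write $B=\{\,q(h-1)\mid q\in\mathcal{T},\ h\in H,\ h\neq 1\,\}$ for the set in question.

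First I would produce a convenient alternative basis of the whole group ring. Fix a coset $q_iH$; its elements $\{q_ih\mid h\in H\}$ form a free $R$-basis of the submodule $R[q_iH]$ they span. The passage from this set to $\{q_i\}\cup\{q_i(h-1)\mid h\in H,\ h\neq 1\}$ is governed by $q_ih=q_i(h-1)+q_i$ for $h\neq 1$ and $q_i\cdot 1=q_i$, that is, by a unipotent integer matrix, which is invertible over $R$. Hence $\{q_i\}\cup\{q_i(h-1)\mid h\neq 1\}$ is again an $R$-basis of $R[q_iH]$. Taking the union over all cosets and using $RG=\bigoplus_{i}R[q_iH]$, the set $\{q_1,\ldots,q_t\}\cup B$ is an $R$-basis of $RG$. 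Linear independence of $B$ then follows at once, since $B$ is a subset of a basis of $RG$.

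The remaining and slightly more delicate point is to show that $B$ spans exactly $\triangle_R(G,H)$, not merely some subspace. One inclusion is immediate: each $q(h-1)$ lies in the left ideal $\triangle_R(G,H)$, so the $R$-span of $B$ is contained in $\triangle_R(G,H)$. For the reverse inclusion I would observe that $\triangle_R(G,H)$ is spanned over $R$ by the elements $g(h-1)$ with $g\in G$, $h\in H$, because the coefficients $\alpha_h\in RG$ appearing in Definition~5 are themselves $R$-combinations of group elements. Writing $g=q_ih_0$ with $h_0\in H$, I compute
$$
g(h-1)=q_ih_0h-q_ih_0=q_i\bigl((h_0h)-1\bigr)-q_i(h_0-1),
$$
and since $h_0h,\,h_0\in H$, each term is either $0$ (when the relevant $H$-element equals $1$) or an element of $B$. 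Thus $g(h-1)$ lies in the $R$-span of $B$, giving $\triangle_R(G,H)\subseteq R$-span$(B)$.

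Combining the two inclusions yields $\triangle_R(G,H)=R$-span$(B)$, and together with the independence established above this shows that $B$ is an $R$-basis of $\triangle_R(G,H)$. The only genuine subtlety is the spanning step: one must check that reducing an arbitrary generator $g(h-1)$ via its coset-representative decomposition never leaves the $R$-span of $B$, which is precisely what the displayed identity guarantees. Everything else is bookkeeping built on the freeness of $RG$ and the coset partition of $G$.
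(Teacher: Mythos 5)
The paper does not prove this lemma at all: it is quoted verbatim from the reference [Milies] (Milies--Sehgal, \emph{An introduction to group rings}), so there is no in-paper argument to compare against. Your proof is correct and is essentially the standard textbook argument: linear independence follows because $\{q_1,\ldots,q_t\}\cup B$ is obtained from the group-element basis of $RG$ by a unimodular (integer, determinant-one) change of basis within each coset block $R[q_iH]$, and spanning follows from the identity $g(h-1)=q_i\bigl((h_0h)-1\bigr)-q_i(h_0-1)$ for $g=q_ih_0$, which reduces every generator $g(h-1)$ of the left ideal to elements of $B$ (the degenerate cases $h_0=1$ and $h_0h=1$ contributing $0$). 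Both halves are handled cleanly, including the one genuinely delicate point you flag yourself, namely that reduction modulo coset representatives stays inside the $R$-span of $B$; nothing is missing.
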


\begin{lemma}[\cite{BB1}] \label{Vpc}
Let $G$ be a finite $p$-group with commutator subgroup $G'$ of order $p>2$. Then
$V(\mathbb{F}_pG)^p$ is a subgroup of the center $\zeta(V(\mathbb{F}_pG))$.
\end{lemma}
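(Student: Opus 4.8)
The plan is to derive the centrality of $p$-th powers from two structural facts about $V:=V(\mathbb{F}_pG)$: that $V$ is nilpotent of class at most $p$, and that every term $\gamma_n(V)$ with $n\ge 2$ has exponent dividing $p$; granting these, a short collection argument shows that each $u^p$ is central. To set up, write $G'=\langle c\rangle$ with $c^p=1$. Since $G'$ is central, $c-1$ is a \emph{central} element of $\mathbb{F}_pG$ with $(c-1)^p=c^p-1=0$. Put $J_k:=(c-1)^k\mathbb{F}_pG$. Because $c-1$ is central, each $J_k$ is a two-sided ideal, $J_kJ_l\subseteq J_{k+l}$, $J_p=0$, and $J_1=\triangle(G,G')$.

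The first step is to control the Lie powers. For group elements one has $hg=gh\,[h,g]$ with $[h,g]\in\langle c\rangle$, so $gh-hg=gh\,(1-[h,g])\in J_1$; hence $L_2(\mathbb{F}_pG)=[\mathbb{F}_pG,\mathbb{F}_pG]\subseteq J_1$. Since $c-1$ is central, bracketing against $\mathbb{F}_pG$ preserves the factor $(c-1)^{k}$, so an easy induction gives $L_n(\mathbb{F}_pG)\subseteq J_{n-1}$ and in particular $L_{p+1}(\mathbb{F}_pG)=0$. As $V\le U(\mathbb{F}_pG)$, the cited result of Gupta yields $\gamma_n(V)\le 1+L_n(\mathbb{F}_pG)\subseteq 1+J_{n-1}$. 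Consequently $\gamma_{p+1}(V)=1$, i.e.\ $V$ has class at most $p$, while $\gamma_n(V)\subseteq 1+J_1$ for all $n\ge 2$. The exponent statement is then immediate: if $w=1+y$ with $y\in J_1$, then $y^p\in J_p=0$, and since $\binom{p}{i}\equiv 0\pmod p$ for $1\le i\le p-1$ we get $w^p=(1+y)^p=1+y^p=1$. Thus $\gamma_n(V)$ has exponent dividing $p$ for every $n\ge 2$.

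The second step collects $p$-th powers. Fix $u,v\in V$ and set $w=[u,v]\in\gamma_2(V)$, so that $u^v=uw$. The Hall--Petrescu collection formula gives
\[
(uw)^p=u^p\,w^p\prod_{i=2}^{p}\tau_i^{\binom{p}{i}},
\]
where each $\tau_i\in\gamma_i(\langle u,w\rangle)\subseteq\gamma_{i+1}(V)$, the last inclusion holding because $w$ already lies in $\gamma_2(V)$ and so contributes an extra unit of weight. Now $w^p=1$; for $2\le i\le p-1$ the exponent $\binom{p}{i}$ is divisible by $p$ while $\tau_i\in\gamma_{i+1}(V)$ has order dividing $p$, so $\tau_i^{\binom{p}{i}}=1$; and $\tau_p\in\gamma_{p+1}(V)=1$. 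Hence $(u^v)^p=(uw)^p=u^p$, so that $[u^p,v]=u^{-p}(u^v)^p=1$. As $v\in V$ is arbitrary, $u^p\in\zeta(V)$, and since $\zeta(V)$ is a subgroup it follows that $V(\mathbb{F}_pG)^p\le\zeta(V(\mathbb{F}_pG))$, as claimed.

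As for where the difficulty lies, the technical engine is the filtration estimate $\gamma_n(V)\subseteq 1+J_{n-1}$, which simultaneously supplies the class bound $\gamma_{p+1}(V)=1$ (needed to kill the top term $\tau_p$) and the exponent bound on $\gamma_{\ge 2}(V)$ (needed to kill the terms with $2\le i\le p-1$). The main obstacle is the collection step itself: one must verify that \emph{all} intermediate Hall--Petrescu correction terms vanish at once, which is exactly the point where the interplay between $\binom{p}{i}\equiv 0\pmod p$ on the middle terms and $\gamma_{p+1}(V)=1$ on the extreme term is used. Everything else reduces to the bookkeeping afforded by the centrality and nilpotence of $c-1$.
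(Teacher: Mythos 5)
Your proof is correct, but it follows a genuinely different route from the source: the paper itself gives no proof of this lemma at all, it is imported from \cite{BB1}, and the method there---echoed throughout Section 4 of this paper---is ring-theoretic. In that approach one expands $\alpha^p$ for $\alpha=\sum_g\alpha_g g$ using Brauer's congruence (Lemma \ref{Brauer}(iii)) or the explicit power formula of Lemma \ref{abp}, getting $\alpha^p=\sum_g\alpha_g g^p+\delta$, where each $g^p$ is central in $G$ (since $[g^p,h]=[g,h]^p=1$) and $\delta$ is supported on the class sums $\widehat{C_g}=g\widehat{G'}$, which are central in $\mathbb{F}_pG$; hence $\alpha^p\in\zeta(V(\mathbb{F}_pG))$. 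Your argument is instead group-theoretic: the filtration $L_n(\mathbb{F}_pG)\subseteq(c-1)^{n-1}\mathbb{F}_pG$, fed into the Gupta--Levin lemma (which the paper states as its Lemma 2.3 but never actually uses), yields both $\gamma_{p+1}(V)=1$ and exponent $p$ for $\gamma_n(V)$, $n\geq 2$, after which Hall--Petrescu collection kills all correction terms in $(uw)^p$. I verified the delicate points and they hold: $\gamma_i(\langle u,w\rangle)\leq\gamma_{i+1}(V)$ for $i\geq 2$ is justified because $\gamma_i(\langle u,w\rangle)$ is generated by commutators of weight $\geq i$ in the generators $u,w$, those involving only $u$ vanish, and any occurrence of $w\in\gamma_2(V)$ raises the weight in $V$ by one; and $y^p=0$ for $y\in(c-1)\mathbb{F}_pG$ because $c-1$ is central with $(c-1)^p=0$. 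Two small things you should state explicitly: the reason $G'$ is central (a normal subgroup of order $p$ in a finite $p$-group meets, hence lies in, the center), and the generation fact about $\gamma_i(\langle u,w\rangle)$ just mentioned. As for what each approach buys: the ring computation produces the explicit form of $\alpha^p$ modulo class sums, which is exactly what the paper reuses in Theorems \ref{ExpV}, \ref{Vp} and \ref{O1V}; your collection argument is self-contained, yields the class bound $\mathrm{cl}(V(\mathbb{F}_pG))\leq p$ and the exponent bound on $\gamma_{\geq 2}(V)$ as byproducts, and in fact nowhere uses the hypothesis $p>2$.
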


\begin{lemma}[\cite{BB1}]\label{abp}
Suppose that $H$ is a group generated by two elements $a, b$, and its commutator subgroup $H'$
is central of prime order $p$. In any group ring of $H$,
$$(a+b)^p=a^p+b^p+\sum\limits_{r=1}^{p-1}\frac{1}{p}\left(
                                                      \begin{array}{c}
                                                        p \\
                                                        r \\
                                                      \end{array}
                                                    \right)
a^rb^{p-r}\widehat{H'}.$$
\end{lemma}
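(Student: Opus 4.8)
The plan is to expand $(a+b)^p$ as a sum over the $2^p$ words of length $p$ in the letters $a,b$, to normalise each word to the shape $a^ib^{p-i}$ times a central power of $t=[a,b]$, and then to show by a cyclic-rotation argument that for $1\le i\le p-1$ these central powers sum to $\frac1p\binom pi\widehat{H'}$.

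First I would fix the commutation rule. Since $H'=\langle t\rangle$ is central of order $p$ one has $ba=ab\,t^{-1}$. Expanding the product gives $(a+b)^p=\sum_w w$, the sum running over all words $w=w_1\cdots w_p$ with each $w_s\in\{a,b\}$. Using $ba=ab\,t^{-1}$ and the centrality of $t$ to move every $a$ to the left rewrites $w$ as $a^{i}b^{p-i}t^{-\mathrm{inv}(w)}$, where $i$ is the number of letters $a$ in $w$ and $\mathrm{inv}(w)$ counts the pairs of positions $s<s'$ with $w_s=b$ and $w_{s'}=a$. Grouping the words by $i$ yields $(a+b)^p=\sum_{i=0}^{p}a^ib^{p-i}\sigma_i$, where $\sigma_i$ is the central element obtained by summing $t^{-\mathrm{inv}(w)}$ over all words $w$ having exactly $i$ letters $a$.

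Next I would evaluate the $\sigma_i$. For $i=0$ and $i=p$ there is a single constant word, with $\mathrm{inv}=0$, so $\sigma_0=\sigma_p=1$, producing the terms $b^p$ and $a^p$. For $1\le i\le p-1$ I would let $C_p$ act by the rotation $w_1w_2\cdots w_p\mapsto w_2\cdots w_pw_1$ on the $\binom pi$ words with $i$ letters $a$. A direct count shows that one rotation changes $\mathrm{inv}(w)$ by $p-i$ if $w_1=a$ and by $-i$ if $w_1=b$, so in either case by $-i$ modulo $p$. As $p$ is prime and the word is non-constant, every orbit has size $p$; and as $\gcd(i,p)=1$, the values $\mathrm{inv}(w)\bmod p$ run once through all of $\mathbb{Z}/p$ along each orbit. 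Hence each orbit contributes $\sum_{j=0}^{p-1}t^{-j}=\widehat{H'}$ to $\sigma_i$, and there being $\frac1p\binom pi$ orbits we get $\sigma_i=\frac1p\binom pi\widehat{H'}$. Substituting the values of $\sigma_i$ back into the grouped expansion gives exactly the stated identity.

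The main obstacle is the bookkeeping in these two steps: first that straightening a word out to $a^ib^{p-i}$ deposits precisely the central factor $t^{-\mathrm{inv}(w)}$, and second that a single rotation shifts $\mathrm{inv}$ by $-i$ modulo $p$. Once the rotation shift is established, the primality of $p$ forces every orbit to have full length $p$ and forces the inversion numbers to be equidistributed modulo $p$ along each orbit, which is exactly what collapses each inner sum $\sigma_i$ to a multiple of $\widehat{H'}$.
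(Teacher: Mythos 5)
Your proof is correct, but there is nothing in the paper to compare it with: the paper does not prove this lemma, it imports it verbatim as a preliminary fact with a citation to \cite{BB1}, and it is only \emph{used} later (in the Remark after Theorem 4.4 and in the proof of Theorem 4.6). So your argument stands as a genuine, self-contained replacement for the citation. The two delicate points are both handled correctly. First, since $t=[a,b]$ is central, the relation $ba=ab\,t^{-1}$ really does straighten an arbitrary word $w$ with $i$ letters $a$ into $a^{i}b^{p-i}t^{-\mathrm{inv}(w)}$, one factor $t^{-1}$ per inversion; here one should also note that $H'=\langle [a,b]\rangle$ (true because $H$ is two-generated with central derived subgroup) and that $[a,b]$ has order exactly $p$, so $\sum_{j=0}^{p-1}t^{-j}=\widehat{H'}$. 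Second, your rotation bookkeeping checks out: rotating $w_1\cdots w_p\mapsto w_2\cdots w_pw_1$ changes $\mathrm{inv}$ by $+(p-i)$ when $w_1=a$ and by $-i$ when $w_1=b$, hence by $-i\pmod p$ in both cases; primality of $p$ forces every non-constant word to have a full orbit of size $p$, and invertibility of $i$ modulo $p$ makes the inversion numbers equidistributed along each orbit, so each orbit contributes exactly $\widehat{H'}$ and $\sigma_i=\frac{1}{p}\binom{p}{i}\widehat{H'}$. Since $\frac{1}{p}\binom{p}{i}$ is an integer for $1\le i\le p-1$, the resulting identity is meaningful over any coefficient ring, which is precisely the generality the statement claims.
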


\section{ The structure of  the group}

Supposet that  $G$ is  a nonabelian  $p$-group given by a
central extension of the form
$$1\longrightarrow N \longrightarrow G \longrightarrow \mathbb{Z}_p\times
\cdots\times \mathbb{Z}_p \longrightarrow 1$$ and $G'=\langle c\rangle\cong
\mathbb{Z}_p$, $N\cong \mathbb{Z}_{p^n}\times \mathbb{Z}_{p^m}$,   $n, m\geq 1$.
Without loss of generality, we  suppose $n\geq m\geq1$  in the following sections.

 Note that $G'\leq\mathrm{Frat}\ G\leq N\leq \zeta G$.
Hence $G/\zeta G$ is an elementary abelian $p$-group. Further, we have the following lemma.

\begin{lemma} \label{STG}
 (1) For any two elements
$\bar{x}=x\zeta G$ and $\bar{y}=y\zeta G$ of $G/\zeta G$, write
$[x,y]=c^{r}$ ($0\leq r\leq p$) and  $f(\bar{x},\bar{y})=r$, then
$G/\zeta G$ is a nondegenerate symplectic space over a field $\mathbb{F}_p$.

(2) $G=E\Y\zeta G$, where $E$ is  a central product of  some inner abelian groups.
Furthermore, these inner abelian groups have the types:
 $M_p(u,v)$ and $M_p(w,1,1)$, where $u,v,w\leq n+1$.

\end{lemma}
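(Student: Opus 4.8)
The plan is to prove the two parts separately: first extract the symplectic structure on $G/\zeta G$ from the class-$2$ commutator calculus, and then use a symplectic basis to peel off a central product of inner abelian factors.

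Since the extension is central, $N\le\zeta G$, so $G'\le\zeta G$ and $G$ has nilpotency class at most $2$. I would first check that $f$ is well defined on cosets: if $z,w\in\zeta G$ then $[xz,yw]=[x,y]$ because $[z,\,\cdot\,]=[\,\cdot\,,w]=1$ and each commutator $[x,y]$ is central. Identifying $G'=\langle c\rangle$ with $(\mathbb{F}_p,+)$ via $c^r\mapsto r$, the class-$2$ identities $[x_1x_2,y]=[x_1,y][x_2,y]$ and $[x,y_1y_2]=[x,y_1][x,y_2]$ make $f$ additive in each variable, and over the prime field $\mathbb{F}_p$ additivity already amounts to $\mathbb{F}_p$-bilinearity; the relation $[x,x]=1$ makes $f$ alternating. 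An element $\bar x$ lies in the radical of $f$ precisely when $[x,y]=1$ for all $y\in G$, i.e. when $x\in\zeta G$, so the radical is trivial and $f$ is nondegenerate. Thus $(G/\zeta G,f)$ is a nondegenerate symplectic $\mathbb{F}_p$-space, and in particular $\dim_{\mathbb{F}_p}(G/\zeta G)$ is even, say $2k$.

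For part (2), I would pick a symplectic basis $\bar a_1,\bar b_1,\dots,\bar a_k,\bar b_k$ of $G/\zeta G$, normalized so that $f(\bar a_i,\bar b_i)=1$ and every other pairing of basis vectors vanishes, and lift it to elements $a_i,b_i\in G$. Then $[a_i,b_i]=c$, while $[a_i,a_j]=[b_i,b_j]=1$ and $[a_i,b_j]=1$ for $i\neq j$. Setting $H_i=\langle a_i,b_i\rangle$, the group $H_i$ is nonabelian with $H_i'=\langle c\rangle$ of order $p$, so $d(H_i)=2$ and Lemma \ref{INA} shows $H_i$ is inner abelian; since $p$ is odd, Lemma \ref{INA1} then gives $H_i\cong M_p(u_i,v_i)$ or $H_i\cong M_p(w_i,1,1)$ (the type $Q_8$ being excluded). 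As the $H_i$ pairwise commute, $E:=H_1\cdots H_k=H_1\Y\cdots\Y H_k$ is their central product, and since the $\bar a_i,\bar b_i$ generate $G/\zeta G$ we have $G=E\,\zeta G$; because $\zeta G$ is central this is exactly the central product $G=E\Y\zeta G$.

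It remains to bound the parameters by $n+1$, and this is the only place the hypothesis on $N$ is needed. Since $G/N$ is elementary abelian, $g^p\in N$ for every $g\in G$; as $n\ge m$ we have $\exp N=p^n$, so the order of $g^p$ divides $p^n$ and hence $\mathrm{ord}(g)\le p^{n+1}$, giving $\exp G\le p^{n+1}$. The generators $a_i,b_i$ are elements of $G$ whose orders are $p^{u_i},p^{v_i}$ (respectively $p^{w_i}$ for the first generator of an $M_p(w_i,1,1)$ factor), so comparing with $\exp G\le p^{n+1}$ yields $u_i,v_i,w_i\le n+1$, as required. I expect the genuinely non-formal input to be this exponent estimate tying $\exp G$ to $\exp N$; the remaining delicacy is the bookkeeping in normalizing the symplectic basis so that each $[a_i,b_i]$ equals $c$ itself (not merely a nontrivial power) while all off-diagonal commutators become trivial, after which the identification of the factors via Lemmas \ref{INA} and \ref{INA1} is routine.
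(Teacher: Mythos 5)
Your proposal is correct and follows essentially the same route as the paper: part (1) via the class-$2$ commutator identities giving a well-defined, bilinear, alternating, nondegenerate form, and part (2) via lifting a symplectic basis to pairwise-commuting two-generator subgroups that are inner abelian by Lemma \ref{INA}, classified by Lemma \ref{INA1}, with the exponent bound $u,v,w\le n+1$ coming from $G_i^p\le N$ and $\exp N=p^n$. The paper's proof is just a terser version of the same argument, so there is nothing to add.
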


\begin{proof}
(1) Obviously  $f$ is well-defined. For  $x$, $y$, $x_i$, $y_i\in
G$, $i=1,2$,   $[x_1x_2,y]=[x_1,y][x_2,y]$ and
$[x,y_1y_2]=[x,y_1][x,y_2]$, thus $f$ is bilinear. Since $[x,x]=1$
and $[x,y]=[y,x]^{-1}$, $f(\bar{x},\bar{x})=0$ and
$f(\bar{x},\bar{y})=-f(\bar{y},\bar{x})$,  $G/\zeta G$ is a
symplectic
 space over $\mathbb{F}_p$. If $f(\bar{x},\bar{y})=0$ for all $y\in
G$, then $[x,y]=1$,  which implies that  $x\in \zeta G$. In a word,  the
symplectic space $G/\zeta G$ is nondegenerate.

(2) According to (1), suppose that  the dimension of symplectic
space $G/\zeta G$ is $2k$, and
$\{\bar{x}_1,\ldots, \bar{x}_k,\bar{y}_1,\ldots,\bar{y}_{k}\}$ is a
basis of $G/\zeta G$,  where $\bar{x}_i=x_i\zeta G$ and  $\bar{y}_i=y_i\zeta G$ for $i, j=1, 2, \ldots, k$, satisfying:
 $f(\bar{x}_i,\bar{y}_i)=1$;
$f(\bar{x}_i,\bar{y}_j)=0$ if $(i\neq j)$;
$f(\bar{x}_i,\bar{x}_j)=f(\bar{y}_i,\bar{y}_j)=0$.

Let $G_i:=\langle x_i,y_i\rangle$, then for $i\neq j$,  $[G_i,G_j]=1$. Obviously $d( G_i)=2$  and $|G_i'|=p$, also by Lemma \ref{INA},
$G_i$ is inner abelian. Note that $G_i^p\leq N$.  According to Lemma \ref{INA1},
the isomorphism types of $(G_i)'s$ are  $M_p(u,v)$ and $M_p(w,1,1)$, where $u,v,w\leq n+1$.$\hfill\square$

\end{proof}

Let $r:=d(\zeta G)$ and
\begin{align}\label{3.1}
\zeta G=\langle z_1\rangle\times\langle z_2\rangle\times\cdots\times\langle z_r\rangle\cong\mathbb{Z}_{p^{n_1}}\times \mathbb{Z}_{p^{n_2}}\times\cdots\times\mathbb{Z}_{p^{n_r}},
 n_1\geq n_2\geq\cdots\geq n_r\geq 1.
\end{align}
Since  $(\zeta G)^p\leq N$,   $n_i=1$ for $3\leq i\leq r$ and $\langle z_1^p\rangle\times\langle z_2^p\rangle=\mathrm{Frat \zeta G}\leq N$.
If $n_2\geq 2$, then
\begin{align}\label{3.2}
N\bigcap \left(\langle z_3\rangle\times\cdots\times\langle z_r\rangle\right)=1,
\end{align}
otherwise, $|\Omega_1(N)|\geq p^3$, a contradiction.
If $n_2=1$,  then  $m$ must be $1$ by $N\leq \zeta G$.  At this time,  by adjusting the parameters $z_1,z_2,\ldots,z_r$,  the equation \eqref{3.2} can similarly be obtained.

\begin{lemma} \label{SCG}
(1) $n\leq n_1\leq n+1$ and $m\leq n_2\leq m+1$.

(2) The isomorphism types of  $\zeta G$ are as follows:

 (i) $A_1\cong\mathbb{Z}_{p^{n}}\times \mathbb{Z}_{p^{m}}\times\underbrace{\mathbb{Z}_p\times\cdots\times\mathbb{Z}_p}_{r-2}$.
 In this case, $N=\langle z_1\rangle\times\langle z_2\rangle $;

(ii) $A_2\cong\mathbb{Z}_{p^{n+1}}\times \mathbb{Z}_{p^{m}}\times\underbrace{\mathbb{Z}_p\times\cdots\times\mathbb{Z}_p}_{r-2}$.
 In this case, $N=\langle z_1^p\rangle\times\langle z_2\rangle $;

(iii) $A_3\cong\mathbb{Z}_{p^{n}}\times \mathbb{Z}_{p^{m+1}}\times\underbrace{\mathbb{Z}_p\times\cdots\times\mathbb{Z}_p}_{r-2}$.
 In this case, $N=\langle z_1\rangle\times\langle z_2^p\rangle $;

(iv) $A_4\cong\mathbb{Z}_{p^{n+1}}\times \mathbb{Z}_{p^{m+1}}\times\underbrace{\mathbb{Z}_p\times\cdots\times\mathbb{Z}_p}_{r-2}$.
 In this case, $N=\langle z_1^p\rangle\times\langle z_2^p\rangle $.
\end{lemma}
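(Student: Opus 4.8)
The plan is to pin down the possible structures of $\zeta G$ by tracking how the cyclic factors of $N = \mathbb{Z}_{p^n} \times \mathbb{Z}_{p^m}$ sit inside the center, using that $N \le \zeta G$ while $(\zeta G)^p \le N$. Write $\zeta G$ as in \eqref{3.1} with invariants $n_1 \ge n_2 \ge \cdots \ge n_r$, where $n_i = 1$ for $i \ge 3$ and $\mathrm{Frat}(\zeta G) = \langle z_1^p \rangle \times \langle z_2^p \rangle \le N$. For part (1), I would first establish the lower bounds $n \le n_1$ and $m \le n_2$ and then the upper bounds $n_1 \le n+1$, $n_2 \le m+1$.

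For the lower bounds: since $N \le \zeta G$ is a subgroup isomorphic to $\mathbb{Z}_{p^n} \times \mathbb{Z}_{p^m}$, its largest invariant $p^n$ cannot exceed the largest invariant $p^{n_1}$ of the ambient abelian group $\zeta G$ (a homocyclic-type comparison: the exponent of $N$ divides the exponent of $\zeta G$, forcing $n \le n_1$), and comparing the two largest invariants of $N$ against those of $\zeta G$ gives $m \le n_2$ as well. More carefully, because $\Omega_1(N) \cong \mathbb{Z}_p \times \mathbb{Z}_p$ embeds in $\Omega_1(\zeta G)$ and $\mho^{m-1}(N) \cong \mathbb{Z}_{p^{n-m+1}} \times \mathbb{Z}_p$, rank and exponent bookkeeping on the subgroup $N$ of the abelian group $\zeta G$ yields $n_2 \ge m$. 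For the upper bounds I would use $(\zeta G)^p \le N$: this says $\mho^1(\zeta G) = \langle z_1^p\rangle \times \langle z_2^p \rangle \cong \mathbb{Z}_{p^{n_1 - 1}} \times \mathbb{Z}_{p^{n_2-1}}$ lies inside $N$, so its invariants are bounded by those of $N$, giving $n_1 - 1 \le n$ and $n_2 - 1 \le m$, i.e. $n_1 \le n+1$ and $n_2 \le m+1$. Combining, $n \le n_1 \le n+1$ and $m \le n_2 \le m+1$, which proves (1).

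For part (2), the four values $(n_1, n_2) \in \{n, n+1\} \times \{m, n+1\}$ — more precisely $\{n, n+1\} \times \{m, m+1\}$ — give exactly the four types $A_1, A_2, A_3, A_4$, and in each case I must identify $N$ as a subgroup of $\zeta G$ explicitly. In case (i), $n_1 = n$ and $n_2 = m$ force $N = \langle z_1 \rangle \times \langle z_2 \rangle$ by order comparison ($|N| = p^{n+m} = |z_1||z_2|$ together with \eqref{3.2}, which isolates the $z_1, z_2$ part from the elementary abelian tail). In case (ii), $n_1 = n+1$, $n_2 = m$ forces $\langle z_1^p \rangle \cong \mathbb{Z}_{p^n}$ and $\langle z_2 \rangle \cong \mathbb{Z}_{p^m}$ to be precisely the two cyclic factors of $N$, whence $N = \langle z_1^p \rangle \times \langle z_2 \rangle$; cases (iii) and (iv) are symmetric, replacing $z_2$ by $z_2^p$ and both, respectively. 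In each case the key point is that $\Omega_1(N)$ cannot meet the $\langle z_3, \ldots, z_r \rangle$ part, as recorded in \eqref{3.2}, so $N$ is forced to lie in $\langle z_1 \rangle \times \langle z_2 \rangle$ and then the matching of invariants pins it down uniquely.

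The main obstacle I anticipate is the invariant-comparison step for the \emph{lower} bound $n_2 \ge m$: bounding a subgroup's invariants above by the ambient group's is the standard direction, but here I need the reverse inequality for the second invariant, which requires care. The clean way is to argue via $\Omega_1$ and $\mho^k$ functors simultaneously — comparing $d(\Omega_1(\zeta G)/\mathrm{(tail)})$ and exponents — rather than quoting a single black-box lemma, since a naive subgroup bound only gives inequalities in one direction. Once the bookkeeping separating the $\mathbb{Z}_{p^{n_1}} \times \mathbb{Z}_{p^{n_2}}$ part from the elementary abelian tail is set up correctly using \eqref{3.2}, the remaining verifications for (2) are routine order and exponent matchings.
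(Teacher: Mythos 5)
Your part (1) is correct, and your route to the upper bounds is in fact cleaner than the paper's: where you apply the subgroup--invariant inequality directly to $(\zeta G)^p\le N$ to get $n_1-1\le n$ and $n_2-1\le m$ in one stroke, the paper instead splits into the cases $n_1=n$ and $n_1=n+1$ and derives order-counting contradictions. Your argument for $n_2\ge m$ (the rank-$2$ group $N^{p^{m-1}}\cong\mathbb{Z}_{p^{n-m+1}}\times\mathbb{Z}_p$ cannot embed in $(\zeta G)^{p^{m-1}}$, which would be cyclic if $n_2\le m-1$) is exactly the paper's.

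The genuine gap is in part (2), cases (ii) and (iii). You claim the invariants ``force'' $N=\langle z_1^p\rangle\times\langle z_2\rangle$, supported by the assertion that \eqref{3.2} forces $N\subseteq\langle z_1\rangle\times\langle z_2\rangle$; both statements are false for the fixed decomposition \eqref{3.1}. Take $n=2$, $m=1$, $\zeta G=\langle z_1\rangle\times\langle z_2\rangle\times\langle z_3\rangle\cong\mathbb{Z}_{p^3}\times\mathbb{Z}_p\times\mathbb{Z}_p$ (type $A_2$) and $N=\langle z_1^p\rangle\times\langle z_2z_3\rangle$. Then $N\cong\mathbb{Z}_{p^2}\times\mathbb{Z}_p$, $(\zeta G)^p=\langle z_1^p\rangle\le N$, and $N\cap\langle z_3\rangle=1$, so every hypothesis you invoke holds; yet $z_2\notin N$, $N\not\subseteq\langle z_1\rangle\times\langle z_2\rangle$, and $N\ne\langle z_1^p\rangle\times\langle z_2\rangle$. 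The lemma's actual content is that the direct decomposition of $\zeta G$ can be \emph{re-chosen} so that $N$ takes the stated form, and constructing that new basis is precisely what the paper's proof spends its effort on: it writes $N=\langle a\rangle\times\langle b\rangle$, expands $z_1^p=a^ib^j$ (and $z_2^p=a^ub^v$), analyzes which exponents are prime to $p$, distinguishes $n>m$ from $n=m$, and then replaces generators (e.g.\ the new $z_2$ is $b$, or $a$). Invariant matching alone cannot produce such a basis, as the example shows. Moreover, case (iii) is not ``symmetric'' to case (ii): in the paper's treatment of $A_3$ the subcase $p\mid v$ forces the extra conclusion $n=m+1$ and requires exchanging the roles of $z_1$ and $z_2$, an asymmetry coming from the standing convention $n\ge m$. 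Your cases (i) and (iv) do survive: in (i) your order count plus \eqref{3.2} yields $\zeta G=N\times\langle z_3,\ldots,z_r\rangle$, after which one may simply take $z_1,z_2$ to be generators of $N$ (this re-choice, not forcing, is what the paper does too); in (iv), $(\zeta G)^p\le N$ together with $|(\zeta G)^p|=p^{n+m}=|N|$ forces equality outright.
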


\begin{proof}
(1) Note that  $N\leq \zeta G$, thus $p^n=\mathrm{Exp}N\leq \mathrm{Exp}(\zeta G)=p^{n_1}$. Obviously  $(\zeta G)^p\leq N$,
we have $n_1-1\leq n$. It follows that  $n\leq n_1\leq n+1$.

If $n_2\leq m-1$, then $N\leq \zeta G$ implies that
 $$\mathbb{Z}_{p^{n-m+1}}\times\mathbb{Z}_{p}\cong N^{p^{m-1}}\leq (\zeta G)^{p^{m-1}}\cong \mathbb{Z}_{p^{n_1-m+1}},$$
 a contradiction. Thus we have  $n_2\geq m$.

Suppose that $n_1=n+1$.
Obviously $\mathbb{Z}_{p^{n_1-1}}\times\mathbb{Z}_{p^{n_2-1}}\cong (\zeta G)^p\leq N$.  If $n_2\geq m+2$, then
$|N|\geq p^{n+m+1}$, a contradiction. Thus we have $m\leq n_2\leq m+1$.
 Suppose that $n_1=n$.
 If $n_2=m+2$, then
 $$(\zeta G)^p\cong\mathbb{Z}_{p^{n-1}}\times\mathbb{Z}_{p^{m+1}}\cong N\cong\mathbb{Z}_{p^n}\times\mathbb{Z}_{p^m}.$$
 From this, we have  $n-1=m$ and $n_1=n=m+1<n_2$, a contradiction.
If $n_2\geq m+3$, then $p^{n+m+1}\leq|(\zeta G)^p|\leq |N|=p^{n+m}$, a contradiction.

(2) Let $N=\langle a\rangle\times\langle b\rangle$, where $|a|=p^n$ and $|b|=p^m$.

(i)  If  $n_1=n$ and $n_2=m$, then according to \eqref{3.2}, we obtain $\zeta G=\langle a,b,z_3,\ldots,z_r\rangle$, which is the isomorphism type $A_1$ of $\zeta G$.

(ii) Suppose that  $n_1=n+1$ and $n_2=m$. We may let $z_1^p=a^ib^j$ since $\langle z_1^p\rangle\leq N$.
If $n>m$, then $i$ and $p$ are coprime  since $|z_1^p|=p^n=|a|>|b|$. In this case,
$N=\langle a^ib^j\rangle\times\langle b\rangle= \langle z_1^p\rangle\times\langle b\rangle$ and $\zeta G=\langle z_1,b,z_3,\ldots,z_r\rangle$ by \eqref{3.2}, which is  the isomorphism type  $A_2$ of  $\zeta G$.

Suppose that $n=m$. Obviously,  one of $i$ and $j$ must be coprime to $p$. If $i$ is coprime to $p$, then
 we may obtain the isomorphism type  $A_2$ of  $\zeta G$ which is similar to the case $n>m$.
 If $j$ is coprime to $p$, then
 $N=\langle a^ib^j\rangle\times\langle a\rangle= \langle z_1^p\rangle\times\langle a\rangle$ and $\zeta G=\langle z_1,a,z_3,\ldots,z_r\rangle$ by \eqref{3.2} which is the isomorphism type  $A_2$ of  $\zeta G$.

(iii) Suppose that  $n_1=n$ and $n_2=m+1$. Not to cause confusion, we may similarly  let $z_1^p=a^ib^j$ and $z_2^p=a^ub^v$.
Obviously, $i$ must be divisible by $p$.  Let $i=pi_1$.

If $v$ is coprime to $p$, then $N=\langle a\rangle\times\langle z_2^p\rangle $ and $\zeta G=\langle a,z_2,z_3,\ldots,z_r\rangle$ by \eqref{3.2},
which is the isomorphism type $A_3$ of $\zeta G$.

Suppose that $v$ is divisible by $p$ and  $v=pv_1$. Obviously $z_2^{p^m}=a^{p^{m-1}u}$ has order $p$.
Since $n=n_1\geq n_2=m+1$, we have $n>m$.
If $j$ is divisible by $p$, then
$z_1^{p^{n-1}}=a^{p^{n-1}i_1}$  is of  order $p$,  a contradiction.
Thus  $(j,p)=1$.  Further  we may obtain  that $n=m+1$, otherwise, $\langle z_1\rangle \bigcap \langle z_2\rangle=\langle a^{p^{m-1}u} \rangle\neq 1$,
a contradiction.
 From this,  $(a^ib^j)^{p^{m-1}}$ is an element with order $p$ of $\langle z_1\rangle$.
 It follows that  $N=\langle a\rangle\times\langle a^ib^j\rangle=\langle a\rangle\times\langle z_1^p\rangle$ and $\zeta G=\langle a,z_1,z_3,\ldots,z_r\rangle$, which is the isomorphism type $A_3$ of $\zeta G$ by adjusting the parameters $z_1$ and $z_2$.

(iv) Suppose that $n_1=n+1$ and $n_2=m+1$. Since $(\zeta G)^p=\langle z_1^p\rangle\times\langle z_2^p\rangle$ has order $p^{n+m}$,
we have $N=(\zeta G)^p$. In this case, we may take $a=z_1^p$ and $b=z_2^{p}$, which is the isomorphism type $A_4$ of $\zeta G$. $\hfill\square$

\end{proof}

According to Lemma \ref{STG}, we know that $G$ is the central product of $E$ and $\zeta G$. Further, suppose that $E$ is the central product of $G_1,G_2,\ldots,G_k$, where the isomorphism types of $(G_i)'s$ are  $M_p(u,v)$ and $M_p(w,1,1)$, where $u,v,w\leq n+1$  as in Lemma  \ref{STG}.
Next we will determine the structure of $G$ according to the types of $\zeta G$ in Lemma \ref{SCG}.

 \subsection{ The isomorphism type $A_1$ of $\zeta G$}

In the section, let
$$\zeta G=\langle z_1\rangle\times\langle z_2\rangle\times\cdots\times\langle z_r\rangle\cong\mathbb{Z}_{p^{n}}\times \mathbb{Z}_{p^{m}}\times\mathbb{Z}_p\times\cdots\times\mathbb{Z}_p, N=\langle z_1\rangle\times\langle z_2\rangle.$$
Obviously, $c\in \Omega_1(N)=\langle z_1^{p^{n-1}}\rangle\times\langle z_2^{p^{m-1}}\rangle$. If $c=z_1^{p^{n-1}}\cdot z_2^{p^{m-1}}$,
 then we may rewrite $N=\langle z_1\rangle\times\langle z_1^{p^{n-m}}z_2\rangle$. At this time, $c=(z_1^{p^{n-m}}z_2)^{p^{m-1}}$.
Without loss of generality, we may always assume that $c\in \langle z_1\rangle$ or $c\in \langle z_2\rangle$.
For every factor $G_i$ of the central product of $E$,  we  will  determine the types of $G_i\Y N$.

\begin{lemma} \label{CPC1}
Suppose $n\geq m\geq 1$ and $c\in \langle z_1\rangle$.

(i) If $G_i\cong M_p(n+1,m+1)$ , then $G_i\Y N=G_i$.  $G_i\ncong M_p(m+1,n+1)$ if $n>m$.

(ii) If $G_i\cong M_p(n+1,v)$, then $1\leq v\leq m+1$.   $G_i\Y N\cong M_p(n+1,1)\times \mathbb{Z}_{p^m}$ if $v\leq m$.

(iii) If $G_i\cong M_p(u,n+1)$, then $2\leq u\leq m+1$ and $n=m$, then  $G_i\Y N\cong M_p(m+1,1,1)\Y\mathbb{Z}_{p^m}$.

(iv) If $G_i\cong M_p(u,v)$, where $2\leq u<n+1$ and $1\leq v<n+1$, then $G_i\Y N\cong M_p(m+1,1,1)\Y \mathbb{Z}_{p^n}$ or $M_p(1,1,1)\Y \mathbb{Z}_{p^{n}}\times \mathbb{Z}_{p^{m}}$.

(v) If $G_i\cong M_p(w,1,1)$, where $1\leq w\leq n+1$, then $G_i\Y N\cong M_p(m+1,1,1)\Y \mathbb{Z}_{p^n}$ or $M_p(1,1,1)\Y \mathbb{Z}_{p^{n}}\times \mathbb{Z}_{p^{m}}$.

\end{lemma}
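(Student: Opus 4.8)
The plan is to analyse each inner abelian factor $G_i=\langle a,b\rangle$ supplied by Lemma \ref{STG} separately, using two elementary facts about the odd prime $p$. First, since $G'=\langle c\rangle$ has order $p$ and $G/N$ is elementary abelian we have $a^p,b^p\in N$, and because $[a,b]=c$ is central of order $p$ with $p$ odd the power map satisfies $(xy)^p=x^py^p$ for $x,y\in G_i$ (the correction term $[y,x]^{p(p-1)/2}$ is trivial, as $\frac{p(p-1)}{2}\equiv 0\pmod p$); consequently replacing $a$ by $ax$ with $x\in N$ leaves $[a,b]=c$ unchanged and alters $a^p$ only by the $p$-th power $x^p\in N^p$. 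Second, by Lemma \ref{INA} the inner abelian group $G_i$ satisfies $\zeta G_i=\mathrm{Frat}\,G_i=\langle a^p,b^p,c\rangle\leq N$, and, since $\zeta G\cap G_i\leq\zeta G_i$, we have $G_i\cap N=\zeta G_i$; when $G_i$ has the full order prescribed by its type the relevant cyclic subgroups among $\langle a^p\rangle,\langle b^p\rangle,\langle c\rangle$ are independent and $|\zeta G_i|=|G_i|/p^2$.

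The decisive invariant is the location of $c$ inside $N$. Since we are in the case $c\in\langle z_1\rangle$ with $|z_1|=p^n$, and since for $n>m$ one has $N^{p^{n-1}}=\langle z_1^{p^{n-1}}\rangle=\langle c\rangle$, the element $c$ is characterised as a $p^{n-1}$-th power in $N$. For a factor of type $M_p(u,v)$ the commutator is $c=(a^p)^{p^{u-2}}\in\langle a^p\rangle$, while for $M_p(w,1,1)$ it is an independent generator disjoint from $\langle a^p\rangle$. Comparing $|\zeta G_i|\leq|N|$ immediately gives the order bounds $v\leq m+1$ in (ii) (there $\zeta G_i\cong\mathbb{Z}_{p^n}\times\mathbb{Z}_{p^{v-1}}$) and $u\leq m+1$ in (iii). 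For the exclusions I would argue by contradiction: if $G_i\cong M_p(m+1,n+1)$ with $n>m$, then $\zeta G_i=\langle a^p\rangle\times\langle b^p\rangle$ has order $p^{n+m}=|N|$, so $\zeta G_i=N$ with $\langle b^p\rangle$ the cyclic factor of order $p^n$ containing $N^{p^{n-1}}$; but $c\in\langle a^p\rangle$ and $\langle a^p\rangle\cap\langle b^p\rangle=1$ force $c\notin N^{p^{n-1}}$, contradicting $c\in\langle z_1\rangle$. The same independence argument applied to $M_p(u,n+1)$ with $n>m$ shows $c\notin N^{p^{n-1}}$ and hence forces $n=m$ in (iii).

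Once the feasible types are fixed I would put $G_i\Y N=G_iN$ into the stated normal form by choosing a basis of $N$ adapted to $G_i$. Writing $N=\langle s\rangle\times\langle d\rangle$ with $\langle s\rangle$ the order-$p^n$ factor containing $c$ and $|d|=p^m$, I use the adjustment $a\mapsto ax$ (respectively $b\mapsto bx$) to cancel the $\langle s\rangle$-component of whichever of $a^p,b^p$ is being simplified, reducing it to an element of $\langle d\rangle$; when the order hypothesis makes that element a genuine $p$-th power in $\langle d\rangle$, a further adjustment lowers the corresponding generator to order $p$ or $1$, after which checking $\langle a,b'\rangle\cap\langle d\rangle=1$ exhibits the product as direct or as amalgamated over $\langle c\rangle$, exactly as in (i) and (ii). For (iv) and (v) the extra device is to transfer $c$ onto the order-$p^n$ factor: since $|s|=p^n$ with $s^{p^{n-1}}=c$, in case (iv) replacing $a$ by $as^{-p^{n-u}}$ kills the internal relation $a^{p^{u-1}}=c$, converting a factor whose commutator lay in $\langle a\rangle$ into one of type $M_p(\cdot,1,1)$ with $c$ independent, while in case (v) the commutator is already independent. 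In both cases one then reduces the remaining non-central generator modulo $N^p$; whether its surviving $p$-th power aligns with the order-$p^m$ factor $\langle d\rangle$ or can be absorbed into $N^p$ is precisely the dichotomy between $M_p(m+1,1,1)\Y\mathbb{Z}_{p^n}$ and $M_p(1,1,1)\Y\mathbb{Z}_{p^n}\times\mathbb{Z}_{p^m}$.

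The main obstacle I anticipate is the bookkeeping between the two competing direct decompositions of $N$: the fixed decomposition $\langle z_1\rangle\times\langle z_2\rangle$ in which $c\in\langle z_1\rangle$ is pinned, and the $G_i$-adapted decomposition coming from $a^p,b^p$. Keeping $c$ correctly placed while re-choosing a basis of $N$, and verifying at every adjustment that a given element is or is not a $p$-th power and that two cyclic subgroups meet trivially, is where all the care lies; the odd-$p$ power identity and the characterisation $\langle c\rangle=N^{p^{n-1}}$ for $n>m$ are the two levers that keep these computations finite and force exactly the listed isomorphism types.
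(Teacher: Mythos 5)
Your proposal is correct and follows essentially the same route as the paper: the bounds $v\le m+1$, $u\le m+1$ come from $\zeta G_i=\mathrm{Frat}\,G_i\le N$ and order comparison, the exclusions in (i) and (iii) come from the fact that for $n>m$ the element $c$ generates $N^{p^{n-1}}=(\zeta G_i)^{p^{n-1}}$ while independence of the cyclic factors of $G_i$ places it in the other factor, and the normal forms come from adjusting generators by central elements of $N$ using the odd-$p$ identity $(xy)^p=x^py^p$. Your ``transfer $c$ onto the order-$p^n$ factor'' device in (iv)--(v) is only a cosmetic repackaging of the paper's coordinate computations (writing $x_i^p=z_1^lz_2^j$, $y_i^p=z_1^sz_2^t$ and case-analysing divisibility of the exponents, including the two-generator combination $x_i^{j_1}y_i^{-t_1}z_1^{s_1t_1-l_1j_1}$ when both $z_2$-components are units).
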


\begin{proof}
(i) Suppose that
$$G_i=\langle x_i,y_i \ | \ x_i^{p^{n+1}}=y_i^{p^{m+1}}=1, x_i^{y_i}=x_i^{1+p^n} \rangle \cong M_p(n+1,m+1).$$
Obviously  $\zeta G_i=\langle x_i^p\rangle\times\langle y_i^p\rangle\cong \mathbb{Z}_{p^{n}}\times\mathbb{Z}_{p^{m}}$,
thus $N=\zeta G_i$ and  $G_i\Y N=G_i$.

Suppose that
$$G_i=\langle x_i,y_i \ | \ x_i^{p^{m+1}}=y_i^{p^{n+1}}=1, x_i^{y_i}=x_i^{1+p^m} \rangle \cong M_p(m+1,n+1).$$
Obviously  $G_i'=\langle x_i^{p^m}\rangle$. Thus  $\langle x_i^{p^m}\rangle=\langle c\rangle=\langle z_1^{p^{n-1}}\rangle$.
Note that  $\zeta G_i=\langle x_i^p\rangle\times\langle y_i^p\rangle\leq N$,
thus $N=\zeta G_i$. But, the case $n>m$ implies that
$$\langle c\rangle=N^{p^{n-1}}=(\zeta G_i)^{p^{n-1}}=\langle y_i^{p^{n}}\rangle,$$  which is impossible. Hence $G_i\ncong M_p(m+1,n+1)$.

(ii) Suppose that
$$G_i=\langle x_i,y_i \ | \ x_i^{p^{n+1}}=y_i^{p^{v}}=1, x_i^{y_i}=x_i^{1+p^n} \rangle \cong M_p(n+1,v).$$
Obviously,  $G_i'=\langle x_i^{p^n}\rangle=\langle c\rangle=\langle z_1^{p^{n-1}}\rangle$.
Note that $\zeta G_i=\langle x_i^p\rangle\times\langle y_i^p\rangle\leq N$, thus $p^{n+v-1}\leq |N|=p^{n+m}$, which implies that
$v-1\leq m$.

Suppose that $v\leq m$. Let $ x_i^p=z_1^{l}z_2^{j}$ and $ y_i^p=z_1^{s}z_2^{t}$, where
$1\leq l,s\leq p^n, 1\leq j,t\leq p^m$. If $l$ is divisible by $p$, then there exists an integer $k$ and $(k,p)=1$ such that
 $$c^k=x_i^{p^n}=z_1^{p^{n-1}l}z_2^{p^{n-1}j}=z_2^{p^{n-1}j},$$
  which is impossible. From this, $l$ is coprime to $p$.  Since $1=y_i^{p^v}=z_1^{p^{v-1}s}z_2^{p^{v-1}t}$,
$s$ and $t$ are divisible by $p^{n-v+1}$ and $p^{m-v+1}$, respectively. Let $s=ps_1$ and $t=pt_1$.
Obviously, $y_iz_1^{-s_1}z_2^{-t_1}$ has order $p$. At this time,  we may replace $y_i$ by $y_iz_1^{-s_1}z_2^{-t_1}$.
$$ G_i\Y N=\langle x_i,y_iz_1^{-s_1}z_2^{-t_1},z_2 \rangle=\langle x_i,y_iz_1^{-s_1}z_2^{-t_1}\rangle\times\langle z_2 \rangle\cong M_p(n+1,1)\times\mathbb{Z}_{p^m}.$$

(iii) Suppose that
$$G_i=\langle x_i,y_i \ | \ x_i^{p^{u}}=y_i^{p^{n+1}}=1, x_i^{y_i}=x_i^{1+p^{u-1}} \rangle \cong M_p(u,n+1).$$
Obviously $\zeta G_i=\langle x_i^p\rangle\times\langle y_i^p\rangle\leq N$. Hence $p^{u-1+n}\leq |N|=p^{n+m}$. From this, we have
$u-1\leq m$.  Let $ x_i^p=z_1^{l}z_2^{j}$ and $ y_i^p=z_1^{s}z_2^{t}$, where
$1\leq l,s\leq p^n, 1\leq j,t\leq p^m$.

If $n>m$,  then
$$1\neq y_i^{p^n}=z_1^{p^{n-1}s}z_2^{p^{n-1}t}=z_1^{p^{n-1}s}\in \langle c\rangle=\langle x_i^{p^{u-1}}\rangle,$$
which is impossible.
It follows  $n=m$.
Suppose that $u< m+1$.  Note that $1=x_i^{p^u}=z_1^{p^{u-1}l}z_2^{p^{u-1}j}$. Hence both
$l$ and $j$ are divisible by $p^{m-u+1}$. Let $l=pl_1$ and $j=pj_1$.
If $t$ is divisible by $p$, then
$$1\neq y_i^{p^n}=z_1^{p^{n-1}s}z_2^{p^{n-1}t}=z_1^{p^{n-1}s}\in \langle c\rangle,$$ which is a contradiction.
 Hence  $t$ is coprime to $p$. In a word,  we may replace $x_i$ by $x_iz_1^{-l_1}z_2^{-j_1}$ and
$$ G_i\Y N=\langle x_iz_1^{-l_1}z_2^{-j_1},y_i,z_1 \rangle=\langle x_iz_1^{-l_1}z_2^{-j_1},y_i\rangle\Y\langle z_1 \rangle\cong M_p(m+1,1,1)\Y\mathbb{Z}_{p^m}.$$

(iv)
Suppose that
$$G_i=\langle x_i,y_i \ | \ x_i^{p^{u}}=y_i^{p^{v}}=1, x_i^{y_i}=x_i^{1+p^{u-1}} \rangle \cong M_p(u,v),$$
where $u,v\leq n$.
 Let $ x_i^p=z_1^{l}z_2^{j}$ and $ y_i^p=z_1^{s}z_2^{t}$, where
$1\leq l,s\leq p^n, 1\leq j,t\leq p^m$.
Obviously, both $l$ and $s$ are divisible by $p$. Let $l=pl_1$ and $s=ps_1$.

If $(j,p)=1=(t,p)$, then there exist $j_1$ and $t_1$ such that $jj_1\equiv 1\pmod{p^m}$ and $tt_1\equiv 1\pmod{p^m}$.
Thus $x_i^{pj_1}=z_1^{pl_1j_1}z_2$ and $y_i^{pt_1}=z_1^{ps_1t_1}z_2$, which implies that
$(x_i^{j_1}y_i^{-t_1}z_1^{s_1t_1-l_1j_1})^p=1$. From this, we have
$$G_i\Y N=\langle y_iz_1^{-s_1}, x_i^{j_1}y_i^{-t_1}z_1^{s_1t_1-l_1j_1}, z_1\rangle\cong M_p(m+1,1,1)\Y \mathbb{Z}_{p^n}.$$

Suppose that  $(j,p)=1$ and $p|t$. Let $t=pt_2$. Obviously,  the orders of $x_iz_1^{-l_1}$ and $y_iz_1^{-s_1}z_2^{-t_2}$ are $p^{m+1}$ and $p$,  respectively. It follows that
$$G_i\Y N=\langle x_iz_1^{-l_1}, y_iz_1^{-s_1}z_2^{-t_2},z_1\rangle\cong M_p(m+1,1,1)\Y \mathbb{Z}_{p^n}.$$
Similarly, we may obtain the same result for the case  $p|j$ and $(t,p)=1$.

Suppose that  $p|j$ and $p|t$. Let $j=pj_2$ and $t=pt_3$.  It follows that
$$G_i\Y N=\langle x_iz_1^{-l_1}z_2^{-j_2}, y_iz_1^{-s_1}z_2^{-t_3},z_1,z_2\rangle\cong M_p(1,1,1)\Y \mathbb{Z}_{p^n}\times \mathbb{Z}_{p^m}. $$

(v)
Suppose that
$$G_i=\langle x_i,y_i ,c\ | \ x_i^{p^{w}}=y_i^p=c^p=1, [x_i,y_i]=c,[x_i,c]=[y_i,c]=1 \rangle \cong M_p(w,1,1),$$
 where $1\leq w\leq n+1$. Let $ x_i^p=z_1^{l}z_2^{j}$, where
$1\leq l\leq p^n, 1\leq j\leq p^m$.

We  first assume that $w=n+1$. If $n>m$, then $x_i^{p^n}=z_1^{p^{n-1}l}z_2^{p^{n-1}j}=z_1^{p^{n-1}l}$,  a contradiction.
Hence $n=m$. At this time,  if  both $l$ and $j$ can be divisible by $p$, then the order of $z_1^{l}z_2^{j}$ is less than $p^m$, which
is a contradiction. If $j$ can be divisible by $p$ and $(l,p)=1$, then  $x_i^{p^m}=z_1^{p^{m-1}l}z_2^{p^{m-1}j}\in \langle c\rangle$,
which is also a contradiction.  Hence $(j,p)=1$. From this, we have
$$G_i\Y N=\langle x_i, y_i,z_1\rangle\cong M_p(m+1,1,1)\Y \mathbb{Z}_{p^m}.$$

We next suppose that $w\leq n$.
Obviously,  $1=x_i^{p^w}=z_1^{p^{w-1}l}z_2^{p^{w-1}j}$. Thus $l$ can be divisible by $p$. Let $l=pl_1$.
If $(j,p)=1$, then
$$G_i\Y N=\langle x_iz_1^{-l_1}, y_i,z_1\rangle\cong M_p(m+1,1,1)\Y \mathbb{Z}_{p^n}.$$
Suppose $j$ can be divisible by $p$. Let $j=pj_1$. From this, we have
$$G_i\Y N=\langle x_iz^{-l_1}z_2^{-j_1}, y_i,z_1,z_2\rangle\cong M_p(1,1,1)\Y \mathbb{Z}_{p^n}\times\mathbb{Z}_{p^m}.$$ $\hfill\square$

\end{proof}

\begin{lemma} \label{CPC2}
Suppose $n\geq m\geq 1$ and $c\in \langle z_2\rangle$.

(i) If $G_i\cong M_p(m+1,n+1)$ , then $G_i\Y N=G_i$.  $G_i\ncong M_p(n+1,m+1)$ if $n>m$.

(ii) If $G_i\cong M_p(n+1,v)$, where $1\leq v\leq m+1$, then $n=m$.  $G_i\Y N\cong M_p(m+1,1)\times \mathbb{Z}_{p^m}$ if $1\leq v\leq m$.

(iii) If $G_i\cong M_p(u,n+1)$, where $2\leq u\leq m$, then $G_i\Y N\cong M_p(n+1,1,1)\Y\mathbb{Z}_{p^m}$.

(iv) If $G_i\cong M_p(u,v)$, where $2\leq u<n+1$ and $1\leq v<n+1$, then
 $G_i\Y N\cong M_p(m+1,1)\times\mathbb{Z}_{p^n}$ or $M_p(1,1,1)\Y \mathbb{Z}_{p^{m}}\times \mathbb{Z}_{p^{n}}$.

(v) If $G_i\cong M_p(n+1,1,1)$, then $G_i\Y N\cong M_p(n+1,1,1)\Y\mathbb{Z}_{p^m}$.

(vi) If $G_i\cong M_p(w,1,1)$, where $1\leq w<n+1$, then $G_i\Y N\cong M_p(m+1,1)\times \mathbb{Z}_{p^n}$ or
$M_p(1,1,1)\Y \mathbb{Z}_{p^{m}}\times \mathbb{Z}_{p^{n}}$.

\end{lemma}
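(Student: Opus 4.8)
The plan is to parallel the case analysis of Lemma \ref{CPC1}, running through each admissible isomorphism type of the inner abelian factor $G_i$ permitted by Lemma \ref{STG} and Lemma \ref{INA1}, but now under the hypothesis $c\in\langle z_2\rangle$. Since $c$ generates $G'$ and $|c|=p$, this hypothesis means $c=z_2^{p^{m-1}}$ up to a unit exponent, so that $c$ sits in the \emph{smaller} cyclic factor of $N$. For each type I would write the standard presentation of $G_i$ from Lemma \ref{INA1}, record that $\zeta G_i=\langle x_i^p\rangle\times\langle y_i^p\rangle\leq N$ and that $G_i'$ is generated by the relevant power of $x_i$ (or by the distinguished central generator in the $M_p(w,1,1)$ case), and then expand the two central $p$-th powers in the fixed basis as $x_i^p=z_1^{l}z_2^{j}$ and $y_i^p=z_1^{s}z_2^{t}$. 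The entire argument then reduces to tracking the $p$-adic valuations of $l,j,s,t$.

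The decisive constraint is $G_i'=\langle c\rangle=\langle z_2^{p^{m-1}}\rangle\leq\langle z_2\rangle$: the commutator-generating power of $x_i$, expanded in $z_1,z_2$, must have vanishing $z_1$-component and a $z_2$-component generating $\langle z_2^{p^{m-1}}\rangle$. The key dichotomy is whether the commutator-bearing generator of $G_i$ is the one of order $p^{n+1}$. If $G_i\cong M_p(n+1,v)$, then $G_i'=\langle x_i^{p^n}\rangle$; for $n>m$ the element $x_i^{p^n}=z_1^{p^{n-1}l}z_2^{p^{n-1}j}$ collapses to $z_1^{p^{n-1}l}\in\langle z_1\rangle$, since $z_2$ has order $p^m$, and so cannot equal the nontrivial element $c\in\langle z_2\rangle$. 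Using $\langle z_1\rangle\cap\langle z_2\rangle=1$, this yields the exclusion $G_i\ncong M_p(n+1,m+1)$ for $n>m$ in part (i) and forces $n=m$ in part (ii). By contrast, when the commutator-bearing generator has small order (as in $M_p(u,n+1)$ with $u\leq m$, part (iii)), the power $x_i^{p^{u-1}}$ legitimately lies in $\langle z_2\rangle$, while the order $p^{n+1}$ now belongs to $y_i$ and $y_i^{p^n}\in\langle z_1\rangle$ is central but need not lie in $G_i'$; hence no collapse occurs and $n>m$ is permitted. The remaining parameters are bounded by $|\zeta G_i|\leq|N|=p^{n+m}$.

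Once the valuations of $l,j,s,t$ are fixed, I would absorb the central contributions by replacing $x_i$ and $y_i$ with suitable products $x_iz_1^{-a}z_2^{-b}$ and $y_iz_1^{-a'}z_2^{-b'}$, with exponents read off from the known valuations, so that the modified generators attain their minimal possible $p$-power orders. This exhibits $G_i\Y N$ as a central product of a single inner abelian group of the claimed type with the leftover cyclic central factors, and the orders of the modified generators together with the position of $c$ identify the isomorphism type. Here one must track whether the leftover cyclic factor meets $\langle c\rangle$ trivially, giving a direct product $\times$ (as in $M_p(m+1,1)\times\mathbb{Z}_{p^n}$, where the whole factor $\langle z_1\rangle$ splits off because $c\in\langle z_2\rangle$), or contains $c$, giving a central product $\Y$ (as in $M_p(n+1,1,1)\Y\mathbb{Z}_{p^m}$); this is also why the case $w=n+1$ must be separated into part (v), the order-$p^{n+1}$ generator then persisting intact. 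The two alternative outcomes in parts (iv) and (vi) correspond exactly to the split according as the exponent $j$ of $z_2$ in $x_i^p$, and its analogue $t$, is coprime to $p$ or divisible by $p$.

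The main obstacle will be the systematic valuation bookkeeping in parts (iv) and (vi), where the sub-cases on the divisibility of $j$ and $t$ by $p$ must be organized so that every possibility is accounted for and collapses, after the generator replacement, to exactly one of the two listed structures. The impossibility arguments forcing $n=m$ in part (ii) and excluding $M_p(n+1,m+1)$ in part (i) are the other delicate points; both rest on the direct-sum relation $\langle z_1\rangle\cap\langle z_2\rangle=1$ and on the fact that an element of $\langle z_2\rangle$ of order $p$ cannot coincide with a nontrivial power of $z_1$. I expect these to become routine once the dichotomy of the preceding paragraph is in place.
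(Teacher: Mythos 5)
Your proposal is correct and takes essentially the same approach as the paper: the paper itself handles parts (i)--(iv) by mirroring the arguments of Lemma \ref{CPC1} and proves (v)--(vi) exactly by your method of expanding $x_i^p$, $y_i^p$ in the basis $z_1,z_2$, tracking the $p$-divisibility of the exponents, and absorbing central factors via generator replacements of the form $x_iz_1^{-a}z_2^{-b}$. Your key dichotomy (the commutator-generating power must land in $\langle z_2\rangle$, while high $p$-th powers collapse into $\langle z_1\rangle$ when $n>m$, with $\langle z_1\rangle\cap\langle z_2\rangle=1$ forcing the exclusions in (i)--(ii)) and the split on whether $j,t$ are units modulo $p$ in (iv) and (vi) coincide with the paper's case analysis.
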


\begin{proof}
(i) - (iv) may be obtained similar to  (i) -(iv) of  Lemma \ref{CPC1}.

(v)
Suppose that
$$G_i=\langle x_i,y_i ,c\ | \ x_i^{p^{n+1}}=y_i^p=c^p=1, [x_i,y_i]=c,[x_i,c]=[y_i,c]=1 \rangle \cong M_p(n+1,1,1).$$
 Let $ x_i^p=z_1^{l}z_2^{j}$, where
$1\leq l\leq p^n, 1\leq j\leq p^m$.
If both $l$ and $j$ can be divisible by $p$, then the order of $z_1^{l}z_2^{j}$ is less than $p^{n}$, which is a contradiction.
If $l$ can be divisible by $p$, then $(j,p)=1$ and $x_i^{p^n}=z_1^{p^{n-1}l}z_2^{p^{n-1}j}=z_2^{p^{n-1}j}$, which is impossible.
Thus  $(l,p)=1$. It follows that
$$G_i\Y N=\langle x_i, y_i,z_2\rangle\cong M_p(n+1,1,1)\Y \mathbb{Z}_{p^m}. $$

(vi)
Suppose that
$$G_i=\langle x_i,y_i ,c\ | \ x_i^{p^{w}}=y_i^p=c^p=1, [x_i,y_i]=c,[x_i,c]=[y_i,c]=1 \rangle \cong M_p(w,1,1),$$
 where $1\leq w<n+1$. Let $ x_i^p=z_1^{l}z_2^{j}$, where
$1\leq l\leq p^n, 1\leq j\leq p^m$.
 $l$ can be divisible by $p$ since $1=x_i^{p^w}=z_1^{p^{w-1}l}z_2^{p^{w-1}j}$. Let $l=pl_1$.

If $(j,p)=1$, then $G_i\Y N=\langle x_iz_1^{-l_1}, y_i,z_1\rangle\cong M_p(m+1,1)\times \mathbb{Z}_{p^n}.$

Suppose $j$ can be divisible by $p$. Let $j=pj_1$. Hence
$$G_i\Y N=\langle x_iz^{-l_1}z_2^{-j_1}, y_i, z_2, z_1\rangle\cong M_p(1,1,1)\Y \mathbb{Z}_{p^m}\times\mathbb{Z}_{p^n}.$$ $\hfill\square$

\end{proof}

According to Lemmas \ref{STG} and \ref{CPC1},  $(G_i\Y N)'s$ have the
following isomorphism types: $M_p(n+1,m+1)$, $M_p(n+1,1)\times \mathbb{Z}_{p^m}$, $M_p(m+1,1,1)\Y \mathbb{Z}_{p^n}$
 and $M_p(1,1,1)\Y \mathbb{Z}_{p^n}\times\mathbb{Z}_{p^m}$. Hence
 the central product of arbitrary two factors $G_1$ and $G_2$ may only be considered  among each other of
$M_p(n+1,m+1)$, $M_p(n+1,1)$, $M_p(m+1,1,1)$ and $M_p(1,1,1)$.

\begin{lemma} \label{GGC1}
Suppose $n\geq m\geq 1$ and $c\in \langle z_1\rangle$. Let $G_1$ and $G_2$ be arbitrary two factors which are isomorphic to  $M_p(n+1,m+1), M_p(n+1,1)$  or $ M_p(m+1,1,1)$.

(i) If $G_1$ and $G_2$ are isomorphic to $M_p(n+1,m+1)$, then $G_1\Y G_2\cong
          M_p(n+1,1)\Y M_p(m+1,1,1) $ or $M_p(n+1,m+1)\Y M_p(1,1,1)$.

(ii) If $G_1$ is isomorphic to $M_p(n+1,m+1)$ and $G_2$ is isomorphic to $M_p(n+1,1)$ or $M_p(m+1,1,1)$, then $G_1\Y G_2\cong M_p(n+1,m+1)\Y M_p(1,1,1)$.

(iii)  If both $G_1$ and $G_2$ are isomorphic to $M_p(m+1,1,1)$, then $G_1\Y G_2\cong M_p(m+1,1,1)\Y M_p(1,1,1)$ or $M_p(m+1,1,1)\Y M_p(m+1,1)$( if $n=m$).

(iv) If both $G_1$ and $G_2$ are isomorphic to $M_p(n+1,1)$, then $G_1\Y G_2\cong  M_p(n+1,1)\Y M_p(1,1,1)$ or $M_p(n+1,1)\Y M_p(m+1,1,1)$.

\end{lemma}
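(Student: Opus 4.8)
The plan is to treat each of the four cases by choosing explicit generators for $G_1$ and $G_2$ and then performing a change of generators, in the same spirit as the proofs of Lemmas \ref{CPC1} and \ref{CPC2}. Throughout one uses that $G_1'=G_2'=\langle c\rangle$, that $[G_1,G_2]=1$, and that $G_1^p,G_2^p\le N=\langle z_1\rangle\times\langle z_2\rangle$. Since $[x_i,y_i]$ generates $G_i'=\langle c\rangle$, it equals $c^{e_i}$ with $(e_i,p)=1$, so after replacing $y_i$ by $y_i^{e_i^{-1}}$ (still a generator of the same order) we may assume $[x_1,y_1]=[x_2,y_2]=c$. Because $p$ is odd and every group in sight has class at most $2$ with $|G'|=p$, the correction term $[y,x]^{p(p-1)/2}$ in the power formula vanishes, so $(uv)^{p^{k}}=u^{p^{k}}v^{p^{k}}$ for all $u,v$; combined with the bilinearity $[u_1u_2,v]=[u_1,v][u_2,v]$ this turns every order and commutator computation into linear algebra modulo powers of $p$.

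First I would record, for each factor, the expansions $x_i^{p}=z_1^{l_i}z_2^{j_i}$ and $y_i^{p}=z_1^{s_i}z_2^{t_i}$, exactly as in the proof of Lemma \ref{CPC1}; the residues of $l_i,j_i,s_i,t_i$ modulo $p$ are the data that govern the outcome. The mechanism is always the same: to split off a copy of $M_p(1,1,1)$ one seeks two elements of order $p$, built from $x_1,y_1,x_2,y_2$ and central elements of $N$, whose commutator is $c$ and which centralize the complementary factor. Concretely, using $c=x_1^{p^{n}}=x_2^{p^{n}}$ one forms cross-products such as $x_1x_2^{-1}$, whose $p$-power order is strictly smaller than that of $x_1$, and corrects them by a suitable central element $z_1^{\alpha}z_2^{\beta}$ to kill the surplus order, precisely as exponents were cleared in Lemma \ref{CPC1}(iv)--(v). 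When the relevant structure constant is a unit modulo $p$ one can drive an order down to $p$ and obtain the ``merged'' description; when it is divisible by $p$ one cannot, and the alternative description results. In each branch one then checks that the two newly chosen pairs of generators commute with one another and that the corresponding subgroups meet in $\langle c\rangle$ (equivalently, that the product of their orders accounts for $|G_1\Y G_2|$), so that $G_1\Y G_2$ is the stated central product; the nondegeneracy of the symplectic form from Lemma \ref{STG}(1) guarantees that a hyperbolic partner realizing commutator value $c$ can always be found.

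The two alternatives listed in (i), (iii) and (iv) correspond to this unit-versus-$p$-divisible dichotomy in the structure constants, and the clause ``(if $n=m$)'' in (iii) reflects the extra freedom available only when the two cyclic factors of $N$ have equal order (so that an element coming from the $z_2$-part can be traded against one from the $z_1$-part). I expect the main obstacle to be organizing this case analysis so that it is exhaustive and non-overlapping, and, within each branch, verifying that the constructed generators genuinely have the advertised orders (this is exactly where oddness of $p$, through $(uv)^{p^{k}}=u^{p^{k}}v^{p^{k}}$, is essential) and that no unintended collapse of $\langle X_1,Y_1\rangle\cap\langle X_2,Y_2\rangle$ occurs. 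Case (iii) is the delicate one, since there the sizes of the two factors and the value of $n-m$ interact; I would carry it out last, after the more rigid cases (ii) and (iv), which leave the least room for choice and are therefore the easiest to pin down.
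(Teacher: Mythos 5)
Your overall strategy does coincide with the paper's: normalize the presentations, expand the $p$-th powers of the generators of one factor over the center of the other (which is $N$, resp.\ $\langle x_1^p\rangle\times\langle z_1\rangle$ or $\langle x_1^p\rangle\times\langle z_2\rangle$, in the various cases), case-split on residues modulo $p$, and regroup the four generators into two new commuting pairs whose orders and commutators are then verified; oddness of $p$ enters exactly where you say it does.

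The gap is that the case analysis is precisely the content of the lemma, and the one concrete prediction you commit to about how it resolves is misidentified. In case (i), your ``unit versus divisible by $p$'' dichotomy does not exist: writing $y_2^p=x_1^{ps}y_1^{pt}$, the coefficient $t$ is \emph{forced} to be a unit (if $p\mid t$ then $y_2^{p^m}=x_1^{p^m s}$ would be a nontrivial element of $\langle x_2\rangle\cap\langle y_2\rangle=1$), and the actual dichotomy is whether $t\equiv -1\pmod p$. If $t\equiv -1$, regrouped generators such as $x_2^{-1}x_1^{l}y_1^{j}$ and $y_2^{-1}x_1^{s}y_1^{t}$ yield $M_p(n+1,1)\Y M_p(m+1,1,1)$; if $t\not\equiv -1$, then $x_2x_1^{t_1}$ (with $tt_1\equiv 1\pmod p$) retains full order $p^{n+1}$ and one first obtains $M_p(n+1,m+1)\Y M_p(n+1,1)$, which must then be fed through case (ii) to reach the stated $M_p(n+1,m+1)\Y M_p(1,1,1)$ --- a reduction of (i) to (ii) that your plan does not anticipate. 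In cases (iii) and (iv) your dichotomy runs backwards: writing $x_2^p=x_1^{ps}z^{t}$ for the appropriate central generator $z\in\{z_1,z_2\}$, the factor $M_p(1,1,1)$ splits off precisely when $p\mid t$ (after renormalizing $x_1$ one gets $x_1^p=x_2^p$, so $x_2x_1^{-1}$ has order $p$), whereas $(t,p)=1$ is the obstructed case that produces the second listed type (and forces $n=m$ in (iii)). Finally, case (ii) has no dichotomy at all: for every choice of coefficients the pairs $\{x_1y_2^{-j},\,y_1y_2^{l}\}$ and $\{x_2x_1^{-l}y_1^{-j},\,y_2\}$ commute and give $M_p(n+1,m+1)\Y M_p(1,1,1)$. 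Identifying the correct invariant in each case is the substance of the proof, not an organizational detail that can be deferred.
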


\begin{proof}
(i) Suppose that
$$G_i=\langle x_i,y_i\ | \ x_i^{p^{n+1}}=y_i^{p^{m+1}}=1, x_i^{y_i}=x_i^{1+p^n} \rangle\cong M_p(n+1,m+1),$$
where  $i=1,2$. By adjusting parameters,   suppose $x_i^{p^n}=c$, $i=1,2$.
Obviously $\zeta G_1=\zeta G_2=N$. Assume that $x_2^p=x_1^{pl}y_1^{pj}$ and
$y_2^p=x_1^{ps}y_1^{pt}$. If $t$ is divisible by $p$, then $1\neq y_2^{p^m}=x_1^{p^ms}y_1^{p^mt}=x_1^{p^ms}$, which is a contradiction.
From this,  we may obtain $(t, p)=1$.  Thus there exists an integer number $t_1$ such that $tt_1\equiv 1\pmod{p}$.

Assume  that   $t_1\equiv -1\pmod{p}$. Obviously  $t\equiv -1\pmod{p}$.
 If  $n>m$, then  $(l,p)=1$ and $s$ is divisible by $p$. Let $s=ps_1$.
Since  $c=x_2^{p^n}=x_1^{p^nl}y_1^{p^nj}=c^ly_1^{p^nj}$, we have $c^{1-l}=y_1^{p^nj}$, that is,
$l\equiv 1\pmod{p}$.  Since
$$[x_2^{-1}x_1^ly_1^j, y_2^{-1}x_1^sy_1^t]=[x_2,y_2][x_1,y_1]^{lt}=cc^{-1}=1,$$
 we have $$G_1\Y G_2=\langle  x_2^{-1}x_1^ly_1^j,  y_1\rangle\Y\langle x_2, y_2^{-1}x_1^sy_1^t \rangle\cong M_p(m+1,1,1)\Y M_p(n+1,1).$$

Suppose  $n=m$. Obviously,  $l$ and $j$ can not simultaneously be divisible by $p$,  nor can $s$ and $t$.
If $j$ and $p$ are  coprime, then $c= x_2^{p^m}=x_1^{p^ml}y_1^{p^mj}=c^ly_1^{p^mj}$, which
 is impossible. Hence $j$ is divisible by $p$ and $l \equiv 1\pmod{p}$. Let $j=pj_1$.
Obviously,  $t$ are coprime to $p$.  We similarly have
 $$G_1\Y G_2=\langle y_2,x_1^{l}y_1^{pj_1}x_2^{-1}\rangle\Y\langle x_1 , x_1^sy_1^{t}y_2^{-1}\rangle\cong M_p(m+1,1,1)\Y M_p(m+1,1).$$

We next suppose  $t_1+1$ is not divisible by  $p$. Obviously
 $(x_2x_1^{t_1})^{p^n}\neq 1$ and $(x_2x_1^{t_1})^{p^{n+1}}=1$, that is,
 $x_2x_1^{t_1}$ is of order $p^{n+1}$. Hence
 $$G_1\Y G_2=\langle  x_2x_1^{t_1},  y_2\rangle\Y\langle x_1, y_2^{-1}x_1^sy_1^t \rangle\cong M_p(n+1,m+1)\Y M_p(n+1,1).$$
 Utilizing the following $(ii)$ in advance,  we may obtain $G_1\Y G_2\cong M_p(n+1,m+1)\Y M_p(1,1,1).$

(ii) Suppose that
$$G_1=\langle x_1,y_1 \ | \ x_1^{p^{n+1}}=y_1^{p^{m+1}}=1, x_1^{y_1}=x_1^{1+p^n} \rangle\cong M_p(n+1,m+1),$$
$$G_2=\langle x_2,y_2\ | \ x_2^{p^{n+1}}=y_2^{p}=1, x_2^{y_2}=x_2^{1+p^n} \rangle\cong M_p(n+1,1)$$
or $$G_2=\langle x_2,y_2,c\ | \ x_2^{p^{m+1}}=y_2^{p}=c^p=1,[x_2,y_2]=c,[x_2,c]=[y_2,c]=1 \rangle\cong M_p(m+1,1,1).$$
For each of the isomorphism types of $G_2$,  obviously $\zeta G_1=N\geq \zeta G_2\geq\langle x_2^p\rangle$.
 Let $x_2^p=x_1^{pl}y_1^{pj}$.
Note that $$[x_2x_1^{-l}y_1^{-j},x_1y_2^{-j}]=[x_2,y_2^{-j}][y_1^{-j},x_1]=c^{-j}c^j=1$$
and $$[x_2x_1^{-l}y_1^{-j},y_1y_2^l]=[x_2,y_2^l][x_1^{-l},y_1]=c^lc^{-l}=1.$$
 Thus
$$G_1\Y G_2=\langle x_1y_2^{-j},y_1y_2^l \rangle\Y\langle x_2x_1^{-l}y_1^{-j}, y_2 \rangle\cong M_p(n+1,m+1)\Y M_p(1,1,1).$$

(iii) Suppose that
$$G_i=\langle x_i,y_i,c\ | \ x_i^{p^{m+1}}=y_i^{p}=c^p=1,[x_i,y_i]=c,[x_i,c]=[y_i,c]=1 \rangle\cong M_p(m+1,1,1),$$
where  $i=1,2$. Let $x_1^p=z_1^{l}z_2^{j}$. If $j$ is divisible by $p$, then $x_1^{p^m}=z_1^{p^{m-1}l}z_2^{p^{m-1}j}=z_1^{p^{m-1}l}$,
which is a contradiction. Thus $(j,p)=1$. Note that $\langle x_1^p\rangle\times\langle z_1\rangle=N=\langle x_2^p\rangle\times\langle z_1\rangle$.
Let $x_2^p=x_1^{ps}z_1^t$. Obviously, $(s,p)=1$.

If $t$ is divisible by $p$, then we may let $t=pt_1$ and  replace $x_1$ by $x_1^{s}z_1^{t_1}$. At this time,
 $ x_1^p=x_2^p$. From this, we have
$$G_1\Y G_2=\langle x_1, y_1y_2 \rangle\Y\langle x_2x_1^{-1},y_2 \rangle\cong M_p(m+1,1,1)\Y M_p(1,1,1).$$
If $(t,p)=1$, then $n=m$ and
$$G_1\Y G_2=\langle x_1, y_1y_2^s \rangle\Y\langle x_2x_1^{-s},y_2 \rangle\cong M_p(m+1,1,1)\Y M_p(m+1,1).$$

(iv) Suppose that
$$G_i=\langle x_i, y_i\ | \ x_i^{p^{n+1}}=y_i^{p}=1,x_i^{y_i}=x_i^{1+p^n} \rangle\cong M_p(n+1,1),$$
where $i=1,2$. Obviously, $\langle x_1^p\rangle\times\langle z_2\rangle=N=\langle x_2^p\rangle\times\langle z_2\rangle$.
Let $x_2^p=x_1^{ps}z_2^t$, where  $(s,p)=1$.

If $t$ is divisible by $p$, then we may let $t=pt_1$ and  replace $x_1$ by $x_1^{s}z_2^{t_1}$. At this time,
 $ x_1^p=x_2^p$. From this, we have
$$G_1\Y G_2=\langle x_1, y_1y_2 \rangle\Y\langle x_2x_1^{-1},y_2 \rangle\cong M_p(n+1,1)\Y M_p(1,1,1).$$
If $(t,p)=1$, then
$$G_1\Y G_2=\langle x_1, y_1y_2^s \rangle\Y\langle x_2x_1^{-s},y_2 \rangle\cong M_p(n+1,1)\Y M_p(m+1,1,1).$$$\hfill\square$

\end{proof}

\begin{theorem} \label{ST1}
Suppose that $n\geq m\geq 1$ and $c\in \langle z_1\rangle$. Then the isomorphism types of $G$ are as follows.

(i) $M_p(n+1,m+1)\Y \underbrace{M_p(1,1,1)\Y\cdots\Y M_p(1,1,1)}_{k-1}\times\underbrace{\mathbb{Z}_p\times\cdots\times\mathbb{Z}_p}_{r-2}$, $k\geq 1$;

(ii) $M_p(n+1,1)\Y M_p(m+1,1,1) \Y\underbrace{M_p(1,1,1)\Y\cdots\Y M_p(1,1,1)}_{k-2} \times\underbrace{\mathbb{Z}_p\times\cdots\times\mathbb{Z}_p}_{r-2}$, $k\geq 2$;

(iii) $M_p(n+1,1)\Y \underbrace{M_p(1,1,1)\Y\cdots\Y M_p(1,1,1)}_{k-1}\times \mathbb{Z}_{p^{m}}\times\underbrace{\mathbb{Z}_p\times\cdots\times\mathbb{Z}_p}_{r-2}$, $k\geq 1$;

(iv) $M_p(m+1,1,1)\Y\underbrace{M_p(1,1,1)\Y\cdots\Y M_p(1,1,1)}_{k-1}\Y \mathbb{Z}_{p^{n}}\times\underbrace{\mathbb{Z}_p\times\cdots\times\mathbb{Z}_p}_{r-2}$, $k\geq 1$;

(v) $\underbrace{M_p(1,1,1)\Y\cdots\Y M_p(1,1,1)}_{k}\Y \mathbb{Z}_{p^{n}}\times \mathbb{Z}_{p^{m}}
\times\underbrace{\mathbb{Z}_p\times\cdots\times\mathbb{Z}_p}_{r-2}$, $k\geq 1$.
\end{theorem}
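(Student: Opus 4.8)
The plan is to reduce $E\Y N$ to a canonical central product by repeatedly absorbing $N$ into the inner abelian factors via Lemma~\ref{CPC1} and merging pairs of non-abelian factors via Lemma~\ref{GGC1}, and then to check that the only stable outcomes are the five configurations (i)--(v).

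First I would separate off the elementary abelian tail. By Lemma~\ref{STG}, $G=E\Y\zeta G$ with $E=G_1\Y\cdots\Y G_k$, and in the type $A_1$ case $\zeta G=N\times\langle z_3\rangle\times\cdots\times\langle z_r\rangle$ with $N=\langle z_1\rangle\times\langle z_2\rangle$ and $\langle z_3\rangle\times\cdots\times\langle z_r\rangle\cong\mathbb{Z}_p\times\cdots\times\mathbb{Z}_p$ ($r-2$ copies). Since $z_3,\dots,z_r$ are central of order $p$, $E^p\le N$, and $N\cap(\langle z_3\rangle\times\cdots\times\langle z_r\rangle)=1$ by \eqref{3.2}, the subgroup $\langle z_3,\dots,z_r\rangle$ splits off as a direct factor, so $G=(E\Y N)\times(\mathbb{Z}_p\times\cdots\times\mathbb{Z}_p)$. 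It therefore suffices to show that $E\Y N$ is isomorphic to the part of one of (i)--(v) preceding the elementary abelian tail.

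I would prove this by induction on $k$. For $k=1$ the group $E\Y N=G_1\Y N$ is exactly Lemma~\ref{CPC1}, whose outputs $M_p(n+1,m+1)$, $M_p(n+1,1)\times\mathbb{Z}_{p^m}$, $M_p(m+1,1,1)\Y\mathbb{Z}_{p^n}$ and $M_p(1,1,1)\Y\mathbb{Z}_{p^n}\times\mathbb{Z}_{p^m}$ are precisely (i), (iii), (iv), (v) with $k=1$ (type (ii) first occurs at $k=2$). For the inductive step I would apply Lemma~\ref{CPC1} to $G_1\Y N$ to split it as an inner abelian factor $H_1\in\{M_p(n+1,m+1),M_p(n+1,1),M_p(m+1,1,1),M_p(1,1,1)\}$ together with the cyclic central piece it extrudes (a product of subgroups of $\langle z_1\rangle$ and $\langle z_2\rangle$); writing $E\Y N=(G_1\Y N)\Y(G_2\Y\cdots\Y G_k\Y N)$ and applying the induction hypothesis to the second factor, I obtain a second inner abelian factor $H_2$ from the same list, together with some copies of $M_p(1,1,1)$ and at most one further cyclic summand. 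Finally I would merge $H_1$ and $H_2$ by Lemma~\ref{GGC1}: in each case one factor collapses to $M_p(1,1,1)$, or the pair $M_p(n+1,m+1)\Y M_p(n+1,m+1)$ rebalances to $M_p(n+1,1)\Y M_p(m+1,1,1)$, so that after consolidating the extruded cyclic factors the result matches exactly one of (i)--(v).

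The main obstacle is that the single central subgroup $N$ is shared by all the $G_i$: the cyclic pieces extruded by Lemma~\ref{CPC1} and the subgroups $G_i^{\,p}\le N$ overlap inside $\langle z_1\rangle\times\langle z_2\rangle$, so every central and direct product decomposition must be justified by tracking how $\langle z_1\rangle$ and $\langle z_2\rangle$ are consumed rather than by treating the factors as independent; this same sharing is what forces at most one $\mathbb{Z}_{p^n}$ and one $\mathbb{Z}_{p^m}$ to survive. Two further points need attention: deleting $G_1$ may leave a sub-instance whose center is of type $A_2$, $A_3$ or $A_4$, so the induction must really run over the whole family of center types at once; and several branches of Lemma~\ref{GGC1} behave differently when $n=m$ (for instance the reduction of $M_p(m+1,1,1)\Y M_p(m+1,1,1)$), so I would verify that each such branch still lands in the list. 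Termination is transparent once one observes that the two heights $p^{n+1}$ and $p^{m+1}$ admitted by $N$ can be distributed among non-abelian and cyclic factors in only the five ways recorded, every other arrangement being reducible by one more application of Lemma~\ref{GGC1}.
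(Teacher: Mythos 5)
Your proposal is correct and is essentially the same argument as the paper's: both decompose $G$ as $(G_1\Y N)\Y\cdots\Y(G_k\Y N)\Y A_1$, normalize each factor $G_i\Y N$ by Lemma \ref{CPC1}, and then merge the non-abelian factors pairwise by Lemma \ref{GGC1} until one of the five forms remains, your explicit induction on $k$ being just a formalization of the iteration that the paper instead organizes as a case analysis on $\mathrm{Exp}(E)$ and on which of the four normalized types occurs among the $G_i\Y N$. One remark: your worry about sub-instances of center type $A_2$, $A_3$ or $A_4$ is unnecessary, since $\zeta(G_2\Y\cdots\Y G_k\Y N)=N$ exactly (each $\zeta G_i=\mathrm{Frat}\,G_i\leq N$ because $G_i$ is inner abelian and $G_i^p G_i'\leq N$), so the inductive sub-instance is always again of type $A_1$ with the same $n$, $m$ and $c\in\langle z_1\rangle$.
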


\begin{proof}
We rewrite $G$ by $(G_1\Y N)\Y (G_2\Y N)\Y\cdots\Y (G_k\Y N)\Y A_1$. According to Lemmas \ref{STG} and \ref{CPC1},
the isomorphism types of $( G_i\Y N)'s$ are $M_p(n+1,m+1)$, $M_p(n+1,1)\times \mathbb{Z}_{p^m}$, $M_p(m+1,1,1)\Y \mathbb{Z}_{p^n}$ and $M_p(1,1,1)\Y \mathbb{Z}_{p^n} \times \mathbb{Z}_{p^m}$.

Suppose that $n>m$.
If $\mathrm{Exp}(E)$ is equal to $p^{n+1}$, then there exists some $G_i$ such that $\mathrm{Exp}(G_i)=p^{n+1}$. Without loss of generality,
we assume that $\mathrm{Exp}(G_1)=p^{n+1}$. Hence $G_1\Y N\cong M_p(n+1,m+1)$ or $M_p(n+1,1)\times \mathbb{Z}_{p^{m}}$ by Lemma \ref{CPC1}.
If there exists some $G_i\Y N$, which  is isomorphic to $ M_p(n+1,m+1)$, then
we may obtain the isomorphism types (i) and (ii) of $G$ by Lemma \ref{GGC1}.
Assume that every one of  $(G_i\Y N)'s$ is not isomorphic to $ M_p(n+1,m+1)$.  Since  $M_p(n+1,1)\Y M_p(m+1,1,1)$
will produce $M_p(n+1,m+1)$, avoiding  repeating the above, we may  obtain  the isomorphism type (iii) by (iv) in Lemma \ref{GGC1}.

Assume that $\mathrm{Exp}(E)$ is less than $p^{n+1}$. At this case, the isomorphism types of $(G_i\Y N)'s$ are $M_p(m+1,1,1)\Y \mathbb{Z}_{p^n}$ and $M_p(1,1,1)\Y \mathbb{Z}_{p^n} \times \mathbb{Z}_{p^m}$.
If there exists some $G_i\Y N$ which is isomorphic to  $M_p(m+1,1,1)\Y \mathbb{Z}_{p^n}$, then we may obtain isomorphism type (iv) of $G$ according to (iii) of Lemma \ref{GGC1}.  Otherwise, (v)  may be obtained.

We next assume that $n=m$. If $\mathrm{Exp}(E)$ is equal to $p^{m+1}$, then we may
suppose $\mathrm{Exp}(G_1)=p^{m+1}$. Hence $G_1\Y N\cong M_p(m+1,m+1)$, $M_p(m+1,1,1)\Y \mathbb{Z}_{p^{m}}$ or  $M_p(m+1,1)\times \mathbb{Z}_{p^m}$ by Lemma \ref{CPC1}.
If  there exists some  $G_i\Y N$, which is isomorphic to $ M_p(m+1,m+1)$, then
we may obtain the isomorphism types (i) and (ii) of $G$ by Lemma \ref{GGC1}.
 Assume that every one of  $(G_i\Y N)'s$ is not isomorphic to $ M_p(m+1,m+1)$.  Obviously, $M_p(m+1,1,1)$ and $M_p(m+1,1)$ can not appear simultaneously.
If $M_p(m+1,1)$  comes into being and $ M_p(m+1,m+1)$ can not be produced,  then we may obtain the isomorphism type (iii) of $G$. If $M_p(m+1,1,1)$  appears and  $ M_p(m+1,m+1)$ can not be produced,
then (iv) may be obtained.
Assume  that $\mathrm{Exp}(E)$ is less than $p^{m+1}$.  At this case, (v) may be obtained. $\hfill\square$

\end{proof}

Avoiding  repeating the above, according to Lemma \ref{CPC2}, we only consider the case $n>m\geq 1$ and the central products among $M_p(m+1,n+1), M_p(m+1,1), M_p(n+1,1,1)$ and $ M_p(1,1,1)$.

\begin{lemma} \label{GGC2}
Suppose that $n> m\geq 1$ and $c\in \langle z_2\rangle$. Let $G_1$ and $G_2$ be arbitrary two factors which are isomorphic to $M_p(m+1,n+1), M_p(m+1,1)$ or $ M_p(n+1,1,1)$.

(i) If both $G_1$ and $G_2$ are isomorphic to $M_p(m+1,n+1)$, then
$G_1\Y G_2\cong M_p(m+1,n+1)\Y M_p(1,1,1) \ or\  M_p(m+1,1)\Y M_p(n+1,1,1)$.

(ii) If $G_1$ is  isomorphic to $M_p(m+1,n+1)$ and $G_2$ is isomorphic to $M_p(m+1,1)$ or $M_p(n+1,1,1)$, then $G_1\Y G_2\cong M_p(m+1,n+1)\Y M_p(1,1,1)$.

(iii)  If both $G_1$ and $G_2$ are isomorphic to $M_p(n+1,1,1)$, then $G_1\Y G_2\cong M_p(n+1,1,1)\Y M_p(1,1,1)$ or $ M_p(m+1,1)\Y M_p(n+1,1,1)$.

(iv) If both $G_1$ and $G_2$ are isomorphic to $M_p(m+1,1)$, then $G_1\Y G_2\cong  M_p(m+1,1)\Y M_p(1,1,1)$.

\end{lemma}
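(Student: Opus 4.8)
The plan is to follow the template of the proof of Lemma~\ref{GGC1}, exploiting the duality that interchanges $z_1\leftrightarrow z_2$ and the exponents $n\leftrightarrow m$; the two structural differences from that situation are that here $c=z_2^{p^{m-1}}$ lies in the \emph{smaller} cyclic factor and that $n>m$ is assumed strictly. I would fix the presentations from Lemma~\ref{INA1}: for a factor isomorphic to $M_p(m+1,n+1)$ write $G_i=\langle x_i,y_i\mid x_i^{p^{m+1}}=y_i^{p^{n+1}}=1,\ x_i^{y_i}=x_i^{1+p^m}\rangle$, so that $[x_i,y_i]=x_i^{p^m}$ and, after adjusting parameters, $x_i^{p^m}=c$; here $\zeta G_i=\langle x_i^p\rangle\times\langle y_i^p\rangle\cong\mathbb{Z}_{p^m}\times\mathbb{Z}_{p^n}=N$, with $x_i^p$ of order $p^m$ and $y_i^p$ of order $p^n$. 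For the smaller types $M_p(m+1,1)$ and $M_p(n+1,1,1)$ the cyclic group $\langle x_i^p\rangle\leq N$ has order $p^m$ and $p^n$ respectively. Because $N\leq\zeta G$, every $x_i^p$ and $y_i^p$ is central in the whole group and $G_1,G_2$ centralise each other, so the only non-trivial commutators to track are $[x_1,y_1]=[x_2,y_2]=c$, and each generator substitution amounts to a symplectic change of basis over $\mathbb{F}_p$, exactly as in Lemma~\ref{GGC1}.

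For case (ii) the factor $G_1\cong M_p(m+1,n+1)$ carries the full centre $N$ while $G_2$ has the smaller centre, so I only expand the single relevant power $x_2^p=x_1^{pl}y_1^{pj}$ in the basis $\{x_1^p,y_1^p\}$ of $N$; then $u_2=x_2x_1^{-l}y_1^{-j}$ has order $p$, it pairs with $y_2$ via $[u_2,y_2]=c$, and after translating the generators of $G_1$ by powers of $y_2$ the two pairs commute, yielding $M_p(m+1,n+1)\Y M_p(1,1,1)$ in every instance, as in Lemma~\ref{GGC1}(ii). Cases (iii) and (iv) share one mechanism. With both factors of the same type I take $\langle x_1^p\rangle$ as one direct factor of $N$ and write $x_2^p=x_1^{ps}z^{t}$, where $z$ is the complementary central generator ($z=z_2$ of order $p^m$ in (iii), $z=z_1$ of order $p^n$ in (iv)) and $(s,p)=1$. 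If $p\mid t$, a translation of $x_1$ by a power of $z$ gives $x_1^p=x_2^p$, whence $x_2x_1^{-1}$ has order $p$ and splits off an $M_p(1,1,1)$, leaving the original type; if $(t,p)=1$, the element $x_2x_1^{-s}$ has order $p^{m+1}$ and yields an $M_p(m+1,1)$ factor. In (iii), where $|z|=p^m\le p^n=|x_2^p|$, both alternatives for $t$ occur, giving $M_p(n+1,1,1)\Y M_p(1,1,1)$ and $M_p(m+1,1)\Y M_p(n+1,1,1)$. In (iv), however, $|x_2^p|=p^m$ while $|z_1|=p^n$ with $n>m$ forces the $z_1$-exponent to be divisible by $p$, so only the first alternative survives and $G_1\Y G_2\cong M_p(m+1,1)\Y M_p(1,1,1)$ with no alternative; this is the one place where the strict inequality $n>m$ is used.

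The main obstacle is case (i), where both factors carry the full centre $N$ and therefore interact genuinely. Here I would write $x_2^p=x_1^{pl}y_1^{pj}$ and $y_2^p=x_1^{ps}y_1^{pt}$; the condition $x_2^{p^m}=c=x_1^{p^m}$ together with $|y_1^p|=p^n>p^m$ forces $l\equiv1\pmod p$ and $p\mid j$, while $|y_2|=p^{n+1}$ forces $(t,p)=1$. The delicate point is to perform the symplectic basis change so that the two new pairs both have commutator $c$ \emph{and} mutually commute, and then to read off the type from the exact orders of the substituted generators. The dichotomy is governed by whether $t\equiv-1\pmod p$, equivalently by whether the commutator form restricts non-degenerately to the subspace of $(G_1\Y G_2)/N$ killed by the $p$-power map: when $t\not\equiv-1$ that subspace is hyperbolic, so the order-$p$ directions pair among themselves to split off an $M_p(1,1,1)$ and leave $M_p(m+1,n+1)$; when $t\equiv-1$ the subspace is isotropic and the order-$p$ directions instead pair with the $p$-power-carrying directions, producing the separated type $M_p(m+1,1)\Y M_p(n+1,1,1)$. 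Once this case is settled, the bookkeeping in the remaining cases is routine, following Lemma~\ref{GGC1} under the relabelling.
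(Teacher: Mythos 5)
Your proposal tracks the paper's proof almost step for step: the same presentations from Lemma \ref{INA1}, the same expansion of the $p$-th powers $x_2^p$, $y_2^p$ in a basis of $N$, and the same generator substitutions in (ii)--(iv). Your symplectic reformulation of case (i) is not really a different route: the nondegeneracy of the commutator form on the kernel $K$ of the $p$-power map is measured by $f(u,v)=1+lt$ for $u=\overline{x_2x_1^{-l}}$, $v=\overline{y_2x_1^{-s}y_1^{-t}}$, so your criterion ``$t\not\equiv-1$ versus $t\equiv-1\pmod p$'' (after your normalization $l\equiv 1$) is exactly the paper's dichotomy $y_1^{p^n}\neq y_2^{-p^nl}$ versus equality, i.e.\ $lt\not\equiv -1$ versus $lt\equiv-1\pmod p$, and your outcomes in all four cases agree with the paper's (the paper reaches $M_p(m+1,n+1)\Y M_p(n+1,1,1)$ first and then invokes (ii); your direct splitting via $K^{\perp}$ also works).

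The one place your argument is genuinely short is (iv). The order comparison $|x_2^p|=p^m<p^n=|z_1|$ yields only $p^{n-m}\mid t$, and by itself it does not even force $(s,p)=1$ (orders allow $p\mid s$ together with $p^{n-m}$ exactly dividing $t$). This is not enough for ``leaving the original type'': after the translation $x_1\mapsto x_1' =x_1^{s}z_1^{t/p}$ you need $(x_1')^{p^m}\in\langle c\rangle$, for otherwise $x_1'$ together with its order-$p$ partner generates $M_p(m+1,1,1)$, not $M_p(m+1,1)$, and the claimed isomorphism type would be wrong. What closes the hole is the constraint the paper computes with: both $x_1^{p^m}$ and $x_2^{p^m}$ generate $G'=\langle c\rangle\leq\langle z_2\rangle$, so writing $x_2^{p^m}=x_1^{p^ms}z_1^{p^{m-1}t}$ and using $\langle z_1\rangle\cap\langle z_2\rangle=1$ forces $z_1^{p^{m-1}t}=1$ (that is, $p^{n-m+1}\mid t$) and $(s,p)=1$, indeed $s\equiv 1 \pmod p$ after normalizing $x_i^{p^m}=c$. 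With that one line inserted, your case (iv) --- and with it the whole proof --- is complete and matches the paper's.
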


\begin{proof}
(i) Suppose that
$$G_i=\langle x_i,y_i \ | \ x_i^{p^{m+1}}=y_i^{p^{n+1}}=1, x_i^{y_i}=x_i^{1+p^m} \rangle\cong M_p(m+1,n+1),$$
where  $i=1,2$.
Obviously,  $\zeta G_1=\zeta G_2=N$. Hence we may assume that $x_2^p=x_1^{pl}y_1^{pj}$ and
$y_2^p=x_1^{ps}y_1^{pt}$. From this,  $j$ is divisible by $p$ and let $j=pj_1$.  If $l$ is divisible by $p$, then $c= x_2^{p^m}=x_1^{p^ml}y_1^{p^mj}=y_1^{p^mj}$, which is a contradiction.
Thus  $(l,p)=1$.
If $t$ is divisible by $p$, then $1\neq y_2^{p^n}=x_1^{p^ns}y_1^{p^nt}=1$ since $n>m\geq 1$, which is a contradiction.
It follows that $t$ is coprime to $p$.

If $y_1^{p^n}\neq y_2^{-p^nl}$, then $y_1y_2^l$ has  order $p^{n+1}$.
Since
$$[x_1^ly_1^jx_2^{-1}, y_1y_2^l]=[x_1,y_1]^{l}[x_2,y_2]^{-l}=c^lc^{-l}=1, $$
we have
$$G_1\Y G_2=\langle x_1,y_1y_2^l\rangle\Y \langle y_2,x_1^ly_1^jx_2^{-1}\rangle
\cong M_p(m+1,n+1)\Y M_p(n+1,1,1),$$
By (ii) in advance,  we may obtain  $G_1\Y G_2\cong M_p(m+1,n+1)\Y M_p(1,1,1).$

Assume that  $y_1^{p^n}= y_2^{-p^nl}$.
Since
$y_2^{-p^nl}=x_1^{-p^nsl}y_1^{-p^nlt}=y_1^{p^n},$
$lt\equiv -1\pmod{p}.$ From this, we have
$$[x_1^ly_1^jx_2^{-1},x_1^sy_1^ty_2^{-1}]=[x_1,y_1]^{lt}[x_2,y_2]=c^{lt}c=1.$$
Hence
$$G_1\Y G_2=\langle x_1,x_1^sy_1^ty_2^{-1}\rangle\Y \langle y_2,x_1^ly_1^jx_2^{-1}\rangle
\cong M_p(m+1,1)\Y M_p(n+1,1,1).$$

(ii) Suppose that
$$G_1=\langle x_1,y_1 \ | \ x_1^{p^{m+1}}=y_1^{p^{n+1}}=1, x_1^{y_1}=x_1^{1+p^n} \rangle\cong M_p(m+1,n+1),$$
 $$G_2=\langle x_2,y_2\ | \ x_2^{p^{m+1}}=y_2^{p}=1, x_2^{y_2}=x_2^{1+p^m} \rangle\cong M_p(m+1,1)$$
or  $$G_2=\langle x_2,y_2,c\ | \ x_2^{p^{n+1}}=y_2^{p}=c^p=1,[x_2,y_2]=c,[x_2,c]=[y_2,c]=1 \rangle\cong M_p(n+1,1,1).$$
For two isomorphism types of $G_2$,   $\zeta G_1=N\geq \zeta G_2\geq\langle x_2^p\rangle$. Hence we may let $x_2^p=x_1^{pl}y_1^{pj}$.

For the first type of  $G_{2}$,  we have $p|j$ and
$$G_1\Y G_2=\langle x_1, y_1y_2^l \rangle\Y\langle x_2x_1^{-l}y_1^{-j}, y_2 \rangle\cong M_p(m+1,n+1)\Y M_p(1,1,1).$$

For the second type of $G_2$,  we have $(j,p)=1$ and
$$G_1\Y G_2=\langle x_1y_2^{-j}, y_1y_2^l \rangle\Y\langle x_2x_1^{-l}y_1^{-j}, y_2 \rangle\cong M_p(m+1,n+1)\Y M_p(1,1,1).$$

(iii) Suppose that
$$G_i=\langle x_i,y_i,c\ | \ x_i^{p^{n+1}}=y_i^{p}=c^p=1,[x_i,y_i]=c,[x_i,c]=[y_i,c]=1 \rangle\cong M_p(n+1,1,1),$$
where  $i=1, 2$.  Let $x_1^p=z_1^{l}z_2^{j}$.  Obviously, $(l, p)=1$  and
 $\langle x_1^p\rangle\times\langle z_2\rangle=N=\langle x_2^p\rangle\times\langle z_2\rangle$.
Let $x_2^p=x_1^{ps}z_2^t$. Obviously, $(s, p)=1$.

If $t$ is divisible by $p$, then we may let $t=pt_1$ and  replace $x_1$ by $x_1^{s}z_1^{t_1}$. At this time,
 $ x_1^p=x_2^p$. From this, we have
$$G_1\Y G_2=\langle x_1, y_1y_2 \rangle\Y\langle x_2x_1^{-1},y_2 \rangle\cong M_p(n+1,1,1)\Y M_p(1,1,1).$$
If $(t,p)=1$, then
$$G_1\Y G_2=\langle x_1, y_1y_2^s \rangle\Y\langle x_2x_1^{-s},y_2 \rangle\cong M_p(n+1,1,1)\Y M_p(m+1,1).$$

(iv) Suppose that
$$G_i=\langle x_i, y_i\ | \ x_i^{p^{m+1}}=y_i^{p}=1,x_i^{y_i}=x_i^{1+p^m} \rangle\cong M_p(m+1,1),$$
where $i=1,2$. Since  $\langle x_1^p\rangle\times\langle z_1\rangle=N=\langle x_2^p\rangle\times\langle z_1\rangle$,
we may let $x_2^p=x_1^{ps}z_1^t$.
Note that $c=x_2^{p^n}=x_1^{p^ns}z_1^{p^{n-1}t}=c^sz_1^{p^{n-1}t}$. Hence  $s\equiv 1\pmod{p}$ and $p|t$.
Let $t=pt_1$ and  replace $x_1$ by $x_1^{s}z_1^{t_1}$. At this time,
 $ x_1^p=x_2^p$. From this, we have
$$G_1\Y G_2=\langle x_1, y_1y_2 \rangle\Y\langle x_2x_1^{-1},y_2 \rangle\cong M_p(m+1,1)\Y M_p(1,1,1).$$ $\hfill\square$

\end{proof}

\begin{theorem} \label{ST2}
Suppose that $n>m\geq 1$ and $c\in \langle z_2\rangle$. Then the isomorphism types of $G$ are as follows.

(i) $M_p(m+1,n+1)\Y \underbrace{M_p(1,1,1)\Y\cdots\Y M_p(1,1,1)}_{k-1}\times\underbrace{\mathbb{Z}_p\times\cdots\times\mathbb{Z}_p}_{r-2}$, $k\geq 1$;

(ii) $M_p(m+1,1)\Y M_p(n+1,1,1) \Y\underbrace{M_p(1,1,1)\Y\cdots\Y M_p(1,1,1)}_{k-2} \times\underbrace{\mathbb{Z}_p\times\cdots\times\mathbb{Z}_p}_{r-2}$, $k\geq 2$;

(iii) $M_p(n+1,1,1)\Y \underbrace{M_p(1,1,1)\Y\cdots\Y M_p(1,1,1)}_{k-1}\Y \mathbb{Z}_{p^{m}}\times\underbrace{\mathbb{Z}_p\times\cdots\times\mathbb{Z}_p}_{r-2}$, $k\geq 1$;

(iv) $M_p(m+1,1)\Y\underbrace{M_p(1,1,1)\Y\cdots\Y M_p(1,1,1)}_{k-1}\times \mathbb{Z}_{p^{n}}\times\underbrace{\mathbb{Z}_p\times\cdots\times\mathbb{Z}_p}_{r-2}$, $k\geq 1$;

(v) $\underbrace{M_p(1,1,1)\Y\cdots\Y M_p(1,1,1)}_{k}\Y \mathbb{Z}_{p^{m}}\times \mathbb{Z}_{p^{n}}
\times\underbrace{\mathbb{Z}_p\times\cdots\times\mathbb{Z}_p}_{r-2}$, $k\geq 1$.

\end{theorem}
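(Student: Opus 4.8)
The plan is to run the argument of Theorem~\ref{ST1} mutatis mutandis, with Lemmas~\ref{CPC2} and \ref{GGC2} playing the roles that \ref{CPC1} and \ref{GGC1} played there. First I would write
$$G=(G_1\Y N)\Y(G_2\Y N)\Y\cdots\Y(G_k\Y N)\Y A_1,$$
where by Lemma~\ref{STG} the $G_i$ are the $k$ inner abelian factors of $E$ and $A_1=\zeta G$ is the amalgamating centre. The cyclic factors $\langle z_3\rangle,\ldots,\langle z_r\rangle$ are central of order $p$ and split off as the direct summand $(\mathbb{Z}_p)^{r-2}$ present in every candidate type, so the whole problem reduces to controlling how the $G_i\Y N$ amalgamate over $N=\langle z_1\rangle\times\langle z_2\rangle$. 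By Lemma~\ref{CPC2}, since $n>m$ the only possible isomorphism types of $G_i\Y N$ are $M_p(m+1,n+1)$, $M_p(n+1,1,1)\Y\mathbb{Z}_{p^m}$, $M_p(m+1,1)\times\mathbb{Z}_{p^n}$ and $M_p(1,1,1)\Y\mathbb{Z}_{p^m}\times\mathbb{Z}_{p^n}$; the type $M_p(n+1,v)$ of Lemma~\ref{CPC2}(ii) cannot occur because it would force $n=m$.

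Next I would branch on $\mathrm{Exp}(E)$, noting that among the four factors above exactly $M_p(m+1,n+1)$ and $M_p(n+1,1,1)$ have exponent $p^{n+1}$. Suppose first $\mathrm{Exp}(E)=p^{n+1}$, so some $G_i\Y N$ is $M_p(m+1,n+1)$ or $M_p(n+1,1,1)\Y\mathbb{Z}_{p^m}$. If an $M_p(m+1,n+1)$ occurs, Lemma~\ref{GGC2}(ii) lets it absorb each neighbouring $M_p(m+1,1)$, $M_p(n+1,1,1)$ or $M_p(1,1,1)$ into an $M_p(1,1,1)$, and Lemma~\ref{GGC2}(i) collapses any surplus $M_p(m+1,n+1)$: retaining one copy gives type (i), while trading the last pair through the second option of Lemma~\ref{GGC2}(i) gives $M_p(m+1,1)\Y M_p(n+1,1,1)$, i.e.\ type (ii) (which needs $k\geq2$). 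If instead no $M_p(m+1,n+1)$ occurs but an $M_p(n+1,1,1)$ does, I would reduce using only the first option of Lemma~\ref{GGC2}(iii)---the second option produces $M_p(m+1,1)\Y M_p(n+1,1,1)\cong M_p(m+1,n+1)\Y M_p(1,1,1)$ and so re-enters the previous case---thereby leaving one $M_p(n+1,1,1)$, the amalgamated $\mathbb{Z}_{p^m}$, and $M_p(1,1,1)$'s: type (iii).

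Finally, if $\mathrm{Exp}(E)<p^{n+1}$ only the factors $M_p(m+1,1)\times\mathbb{Z}_{p^n}$ and $M_p(1,1,1)\Y\mathbb{Z}_{p^m}\times\mathbb{Z}_{p^n}$ arise; when some $M_p(m+1,1)$ is present Lemma~\ref{GGC2}(iv) reduces all but one to $M_p(1,1,1)$ and the direct factor $\mathbb{Z}_{p^n}$ survives, giving type (iv), and otherwise every factor is $M_p(1,1,1)$, giving type (v). The abelian bookkeeping is forced at each step: $\langle z_1\rangle\cong\mathbb{Z}_{p^n}$ is absorbed as some $x^p$ precisely when a factor of exponent $p^{n+1}$ is present and otherwise survives as $\times\mathbb{Z}_{p^n}$, while $\langle z_2\rangle\cong\mathbb{Z}_{p^m}$ (carrying $c=z_2^{p^{m-1}}$) is either absorbed or remains as the centrally amalgamated $\Y\mathbb{Z}_{p^m}$. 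I expect the main obstacle to be not any individual computation but the combinatorial exhaustiveness---in each branch I must check that the option chosen in Lemma~\ref{GGC2} is consistent with staying in that branch, i.e.\ that I never inadvertently manufacture an $M_p(m+1,n+1)$ where none was assumed, which is exactly the point flagged by the phrase ``avoiding repeating the above'' in Theorem~\ref{ST1}.
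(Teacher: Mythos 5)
Your overall strategy is exactly the paper's own proof: the same decomposition $G=(G_1\Y N)\Y\cdots\Y(G_k\Y N)\Y A_1$, the same four possible types of $G_i\Y N$ extracted from Lemma \ref{CPC2} (with $M_p(n+1,v)$ excluded since $n>m$), and the same branching on $\mathrm{Exp}(E)$ and on whether an $M_p(m+1,n+1)$ factor occurs, with Lemma \ref{GGC2} doing the pairwise collapsing. However, the parenthetical isomorphism you use to close the middle branch is false: $M_p(m+1,1)\Y M_p(n+1,1,1)\ncong M_p(m+1,n+1)\Y M_p(1,1,1)$ (both central products amalgamated over $\langle c\rangle$). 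These two groups are precisely types (ii) and (i) of the theorem with $k=2$, $r=2$, so if your claim were true the classification you are proving would list the same group twice. To see that they differ, write $M_p(m+1,n+1)=\langle a,b\rangle$ with $c=a^{p^m}$, $M_p(m+1,1)=\langle a_1,b_1\rangle$ with $c=a_1^{p^m}$, and $M_p(n+1,1,1)=\langle a_2,b_2,c\rangle$. For odd $p$ these groups have class $2$ and $\binom{p}{2}\equiv 0\pmod p$, so $p$-th powers multiply componentwise across a central product. In $M_p(m+1,n+1)\Y M_p(1,1,1)$ an element $uh$ satisfies $(uh)^p=u^p$ because $M_p(1,1,1)$ has exponent $p$; hence $\Omega_1=\langle a^{p^m},b^{p^n}\rangle\cdot M_p(1,1,1)$ contains $M_p(1,1,1)$ and is non-abelian. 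In $M_p(m+1,1)\Y M_p(n+1,1,1)$ one has $(uh)^p=u^ph^p$ with $u^p\in\langle a_1^p\rangle$, $h^p\in\langle a_2^p\rangle$, and $\langle a_1^p\rangle\cap\langle a_2^p\rangle=1$; hence $\Omega_1=\langle b_1,\,b_2,\,a_2^{p^n},\,c\rangle\cong \mathbb{Z}_p^4$ is abelian.

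The conclusion of that branch is nevertheless correct, for a different reason, and this is what the paper's phrase ``$M_p(n+1,1,1)\Y M_p(m+1,1)$ will produce $M_p(m+1,n+1)$'' actually means. When the second option of Lemma \ref{GGC2}(iii) occurs, the group $G$ is literally of type (ii) — $M_p(m+1,1)\Y M_p(n+1,1,1)$ together with the remaining factors collapsed to $M_p(1,1,1)$'s — which is already on the list from the first branch; equivalently, $M_p(m+1,1)\Y M_p(n+1,1,1)$ re-decomposes as a central product of \emph{two copies of} $M_p(m+1,n+1)$ amalgamated over their full common centre (read Lemma \ref{GGC2}(i), second case, backwards; explicitly $\langle a_1,\,a_2b_1\rangle\Y\langle a_1b_2,\,a_2\rangle$), so such a $G$ falls under the case already treated. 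It does \emph{not} re-decompose as $M_p(m+1,n+1)\Y M_p(1,1,1)$. Replacing your claimed isomorphism by this observation, your argument closes and coincides with the paper's proof.
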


\begin{proof}
By Lemma \ref{STG},
$$G=E\Y A_1=G_1\Y G_2\Y\cdots\Y G_k\Y A_1=(G_1\Y N)\Y (G_2\Y N)\Y\cdots\Y (G_k\Y N)\Y A_1.$$
By Lemmas \ref{STG} and  \ref{CPC2},
  the isomorphism types of $ (G_i\Y N)'s$ are $M_p(m+1,n+1)$, $M_p(n+1,1,1)\Y \mathbb{Z}_{p^m}$, $M_p(m+1,1)\times \mathbb{Z}_{p^n}$ and $M_p(1,1,1)\Y \mathbb{Z}_{p^m} \times \mathbb{Z}_{p^n}$.

If $\mathrm{Exp}(E)=p^{n+1}$, then there exists some $G_i$ such that $\mathrm{Exp}(G_i)=p^{n+1}$. Without loss of generality,
assume that $\mathrm{Exp}(G_1)=p^{n+1}$. Hence $G_1\Y N\cong M_p(m+1,n+1)$ or $M_p(n+1,1,1)\Y \mathbb{Z}_{p^{m}}$ by Lemma \ref{CPC2}.
If  there exists some $ G_i\Y N$, which  are isomorphic to $ M_p(m+1,n+1)$, then
we may obtain the isomorphism types (i) and (ii) of $G$ by Lemma \ref{GGC2}.
Assume that every one of  $ (G_i\Y N)'s$ is not isomorphic to $ M_p(m+1,n+1)$.  Note that   $M_p(n+1,1,1)\Y M_p(m+1,1)$ will produce $M_p(m+1,n+1)$.   $M_p(n+1,1,1)$  comes into being and $ M_p(m+1,m+1)$ can not be produced in order to avoid  repeating the above, thus we may  obtain  the isomorphism type (iii) by (iii) in Lemma \ref{GGC2}.

Assume that $\mathrm{Exp}(E)$ is less than $p^{n+1}$. At this time, the isomorphism types of $(G_i\Y N)'s$ are $M_p(m+1,1)\times \mathbb{Z}_{p^n}$ and $M_p(1,1,1)\Y \mathbb{Z}_{p^m} \times \mathbb{Z}_{p^n}$.
If there exists some $G_i\Y N$ satisfying $G_i\Y N\cong M_p(m+1,1)\times \mathbb{Z}_{p^n}$, then we may obtain isomorphism type (iv) of  $G$ according to (iv) of  Lemma \ref{GGC2}.  Otherwise, (v)  may be obtained. $\hfill\square$

\end{proof}

\subsection{ The isomorphism types $A_2$  and $A_3$ of $\zeta G$}

In the section,  we first consider the isomorphic type $A_2$ of $\zeta G$. Let
$$\zeta G=\langle z_1\rangle\times\langle z_2\rangle\times\cdots\times\langle z_r\rangle\cong\mathbb{Z}_{p^{n+1}}\times \mathbb{Z}_{p^{m}}\times\mathbb{Z}_p\times\cdots\times\mathbb{Z}_p, N=\langle z_1^p\rangle\times\langle z_2\rangle,$$
 Without loss of generality, we may always  let $c\in \langle z_1^p\rangle$ or $c\in \langle z_2\rangle$.
$E$ is the central product of $G_1,G_2,\ldots, G_k$ and the isomorphism types of $(G_i)'s$  are  $M_p(u,v)$ and $M_p(w,1,1)$, where $u,v,w\leq n+1$ in (2) of Lemma \ref{STG}.

\begin{lemma} \label{CPC3}
Suppose that $n\geq m\geq 1$ and $c\in \langle z_1\rangle$.

(i) If $G_i\cong M_p(n+1,v)$, then $1\leq v\leq m+1$ and $G_i\Y  \langle z_1,z_2\rangle\cong M_p(m+1,1,1)\Y \mathbb{Z}_{p^{n+1}}$ or $M_p(1,1,1)\Y \mathbb{Z}_{p^{n+1}}\times \mathbb{Z}_{p^{m}}$.

(ii) If $n=m$ and $G_i\cong M_p(u, m+1)$, where $2\leq u<m+1$, then $G_i\Y  \langle z_1,z_2\rangle\cong M_p(m+1,1,1)\Y \mathbb{Z}_{p^{m+1}}$. If $n>m$, then $G_i$ is not isomorphic to $M_p(u,n+1)$, where $2\leq u\leq m+1$.

(iii) If $G_i\cong M_p(u,v)$, where $2\leq u<n+1$ and $1\leq v<n+1$, then $G_i\Y  \langle z_1,z_2\rangle\cong M_p(m+1,1,1)\Y \mathbb{Z}_{p^{n+1}}$ or $M_p(1,1,1)\Y \mathbb{Z}_{p^{n+1}}\times \mathbb{Z}_{p^{m}}$.

(iv) If $G_i\cong M_p(w,1,1)$, then  $1\leq w<n+1$ (If $n>m$) or $1\leq w\leq m+1$ (If $n=m$).  Further, $G_i\Y  \langle z_1,z_2\rangle\cong M_p(m+1,1,1)\Y \mathbb{Z}_{p^{n+1}}$ or $M_p(1,1,1)\Y \mathbb{Z}_{p^{n+1}}\times \mathbb{Z}_{p^{m}}$.

\end{lemma}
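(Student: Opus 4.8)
The plan is to run, almost verbatim, the same argument as in Lemma \ref{CPC1}, the only structural change being that here $\langle z_1\rangle\cong\mathbb{Z}_{p^{n+1}}$ has order one higher than its counterpart in the $A_1$ case, so the cyclic group that is split off will be $\mathbb{Z}_{p^{n+1}}$ rather than $\mathbb{Z}_{p^n}$. Throughout I work inside $H:=G_i\Y\langle z_1,z_2\rangle=\langle x_i,y_i,z_1,z_2\rangle$. Two facts are recorded at the outset. First, since $c\in\langle z_1^p\rangle$ and $|z_1|=p^{n+1}$, the subgroup $\langle c\rangle$ is exactly $\Omega_1(\langle z_1\rangle)=\langle z_1^{p^n}\rangle$; after adjusting by a unit I may assume $c=z_1^{p^n}$. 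Second, by Lemma \ref{STG} and $G_i^p\leq\mathrm{Frat}\,G\leq N=\langle z_1^p\rangle\times\langle z_2\rangle$, every $p$-th power of a generator of $G_i$ lies in $N$, so when such a power is written as $z_1^{a}z_2^{b}$ the exponent $a$ is automatically divisible by $p$.

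First I would dispose of the order bounds and the non-existence claims, which are pure counting. For (i), $\zeta G_i=\langle x_i^p\rangle\langle y_i^p\rangle\leq N$ has order $p^{\,n+(v-1)}\leq|N|=p^{n+m}$, forcing $v\leq m+1$; the analogous inequality in (iv) bounds $w$. The non-existence statement in (ii), and the restriction $w<n+1$ in (iv) when $n>m$, are the exact analogues of Lemma \ref{CPC1}(iii) and (v): writing $y_i^p=z_1^{ps}z_2^{t}$ for a putative $M_p(u,n+1)$ and computing $y_i^{p^n}=z_1^{p^n s}z_2^{p^{n-1}t}$, the hypothesis $n>m$ makes $z_2^{p^{n-1}t}=1$, so the order-$p$ central element $y_i^{p^n}$ collapses into $\langle z_1^{p^n}\rangle=\langle c\rangle=G_i'$; but in $M_p(u,n+1)$ the element $y_i^{p^n}$ lies in the complementary factor $\langle y_i^p\rangle$ of the centre and is independent of $G_i'$, a contradiction. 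When $n=m$ the term $z_2^{p^{n-1}t}$ survives and no contradiction arises, which is why the type persists for $n=m$ and produces $M_p(m+1,1,1)\Y\mathbb{Z}_{p^{m+1}}$.

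The main work is the straightening step. In each of (i), (iii), (iv) I write $x_i^p=z_1^{pl}z_2^{j}$ and $y_i^p=z_1^{ps}z_2^{t}$ (in the $M_p(w,1,1)$ case only the single relation for $x_i^p$), and use $c=x_i^{p^n}=z_1^{p^n}$, respectively the commutator relation $[x_i,y_i]=c$, to decide which of $l,j,s,t$ are units modulo $p$. I then replace $x_i$ and $y_i$ by suitable products $x_iz_1^{-\alpha}z_2^{-\beta}$ and $y_iz_1^{-\gamma}z_2^{-\delta}$ that annihilate the central parts, exactly as in Lemma \ref{CPC1}; the replacements preserve the fact that the commutator of the new generators equals $c$, because $z_1,z_2$ are central. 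The outcome is a clean inner abelian group on the new generators with commutator $c$, together with the central cyclic group $\langle z_1\rangle$. Splitting $\langle z_1\rangle\cong\mathbb{Z}_{p^{n+1}}$ off as a central product over $\langle c\rangle=\langle z_1^{p^n}\rangle$ gives the first listed type $M_p(m+1,1,1)\Y\mathbb{Z}_{p^{n+1}}$; in the subcase where the residual $z_2$-exponent is divisible by $p$, the element $z_2$ also detaches as a genuine direct factor $\mathbb{Z}_{p^m}$, giving the second type $M_p(1,1,1)\Y\mathbb{Z}_{p^{n+1}}\times\mathbb{Z}_{p^m}$.

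The delicate point, and the only place where care beyond Lemma \ref{CPC1} is needed, is the verification that $\langle z_1\rangle$ meets the reconstructed inner abelian factor precisely in $\langle c\rangle$, so that the decomposition really is a central product over $\langle c\rangle$ and the split-off group has the full order $p^{n+1}$ rather than $p^n$. This is exactly where the $A_2$ case differs from $A_1$, and it forces the separate treatment of $n>m$ versus $n=m$: only when $n=m$ can a generator on the $y$-side attain order $p^{n+1}=p^{m+1}$, which is the source both of the surviving type $M_p(u,m+1)$ in (ii) and of the extended range $1\leq w\leq m+1$ in (iv). Once this order bookkeeping is checked, the dichotomy between the two output types is governed by the single divisibility condition on the $z_2$-exponent already isolated above, and the four parts follow.
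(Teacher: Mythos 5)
Your proposal follows essentially the same route as the paper's proof: write the $p$-th powers of the $G_i$-generators in the basis $z_1,z_2$ (with the $z_1$-exponent automatically divisible by $p$), use the collapse of the order-$p$ element $y_i^{p^n}$ (resp.\ $x_i^{p^n}$) into $\langle c\rangle=\langle z_1^{p^n}\rangle$ when $n>m$ to get the non-existence claims in (ii) and the bound on $w$ in (iv), and then clear central parts by generator replacements before splitting off $\langle z_1\rangle\cong\mathbb{Z}_{p^{n+1}}$ as a central product over $\langle c\rangle$. This is the paper's argument, including the dichotomy governed by whether the residual $z_2$-exponent is a unit modulo $p$ (which decides between $M_p(m+1,1,1)$ and $M_p(1,1,1)\times\mathbb{Z}_{p^m}$), so the proposal is correct and not a different method.
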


\begin{proof}
(i) Suppose that
$$G_i=\langle x_i,y_i \ | \ x_i^{p^{n+1}}=y_i^{p^{v}}=1, x_i^{y_i}=x_i^{1+p^n} \rangle \cong M_p(n+1,v).$$
Obviously,  $\zeta G_i=\langle x_i^p, y_i^p\rangle\leq N$. From this,  we have $p^{n+v-1}\leq |N|=p^{n+m}$ and
$v\leq m+1$. Let $ x_i^p=z_1^{pl}z_2^{j}$ and $ y_i^p=z_1^{ps}z_2^{t}$, where
$1\leq l,s\leq p^{n}, 1\leq j,t\leq p^m$.

Suppose that $v=m+1$. Obviously, $\zeta G_i=N$.
If $t$ is divisible by $p$, then  $1\neq y_i^{p^m}=z_1^{p^{m}s}z_2^{p^{m-1}t}=z_1^{p^{m}s}$, which is a contradiction.
From this, we have  $t$ and $p$ are coprime.

 If $j$ and $p$ are coprime,  then we may let $x_i^p=z_1^{pl}z_2$ and
$y_i^p=z_1^{ps}z_2$ without loss of generality. At this case,
$$G_i\Y\langle z_1,z_2\rangle=\langle y_iz_1^{-s}, x_iy_i^{-1}z_1^{s-l}\rangle\Y\langle z_1\rangle\cong M_p(m+1,1,1)\Y \mathbb{Z}_{p^{n+1}}.$$
Assume that  $j$ is divisible by $p$ and  $j=pj_1$. Hence
$$G_i\Y\langle z_1,z_2\rangle=\langle y_iz_1^{-s}, x_iz_1^{-l}z_2^{-j_1} \rangle\Y\langle z_1\rangle\cong M_p(m+1,1,1)\Y \mathbb{Z}_{p^{n+1}}.$$

We next suppose $1\leq v<m+1$. Since $1=y_i^{p^v}=z_1^{p^{v}s}z_2^{p^{v-1}t}$, we have
$t$ is divisible by  $p^{m-v+1}$. Let $t=pt_1$.
If $j$ and $p$ are coprime, then $$ G_i\Y\langle z_1,z_2\rangle=\langle x_iz_1^{-l},y_iz_1^{-s}z_2^{-t_1},z_1\rangle\cong M_p(m+1,1,1)\Y\mathbb{Z}_{p^{n+1}}.$$
Assume that $j$ is divisible by $p$ and $j=pj_1$. It follows that
 $$ G_i\Y\langle z_1,z_2\rangle=\langle x_iz_1^{-l}z_2^{-j_1},y_iz_1^{-s}z_2^{-t_1}\rangle\Y\langle z_1,z_2 \rangle\cong M_p(1,1,1)\Y\mathbb{Z}_{p^{n+1}}\times\mathbb{Z}_{p^m}.$$

(ii)  Suppose that $$ G_i=\langle x_i,y_i \ | \ x_i^{p^{u}}=y_i^{p^{n+1}}=1, x_i^{y_i}=x_i^{1+p^{u-1}} \rangle \cong M_p(u,n+1),$$
where  $2\leq u\leq m+1$. Note that $\zeta G_i=\langle x_i^p,y_i^p\rangle\leq\langle z_1^p,z_2\rangle$. At this case,  let $x_i^p=z_1^{pl}z_2^{j}$ and $y_i^p=z_1^{ps}z_2^{t}$.

If $n=m$, then we only consider the case  $2\leq u< m+1$ in order to avoid repeating (i).  Since $1= x_i^{p^u}=z_1^{p^ul}z_2^{p^{u-1}j}$,
both $l$ and $j$ are divisible by $p^{m+1-u}$.  Let $j=pj_1$.
Since $1\neq y_i^{p^m}=z_1^{p^ms}z_2^{p^{m-1}t}=c^sz_2^{p^{m-1}t}$,  we have $t$ and $p$ are coprime.
From this, we have
$$G_i\Y\langle z_1,z_2\rangle=\langle y_iz_1^{-s}, x_iz_1^{-l}z_2^{-j_1},z_1\rangle\cong M_p(m+1,1,1)\Y \mathbb{Z}_{p^{m+1}}.$$
Assume that  $n>m$. At this case,
 $1\neq y_i^{p^n}=z_1^{p^ns}z_2^{p^{n-1}t}=z_1^{p^ns}=c^s$, a contradiction.

(iii)
Suppose that
$$G_i=\langle x_i,y_i \ | \ x_i^{p^{u}}=y_i^{p^{v}}=1, x_i^{y_i}=x_i^{1+p^{u-1}} \rangle \cong M_p(u,v),$$
where $u, v\leq n$.
Let $ x_i^p=z_1^{pl}z_2^{j}$ and $ y_i^p=z_1^{ps}z_2^{t}$, where
$1\leq l,s\leq p^n, 1\leq j,t\leq p^m$.

If $(j,p)=1=(t,p)$, then  we may assume that $ x_i^p=z_1^{pl}z_2$ and $ y_i^p=z_1^{ps}z_2$ without loss of generality. At this case, we have
$$G_i\Y\langle z_1,z_2\rangle=\langle x_iz_1^{-l}, x_iy_i^{-1}z_1^{s-l}, z_1\rangle\cong M_p(m+1,1,1)\Y \mathbb{Z}_{p^{n+1}}.$$

Suppose that  $(j,p)=1$ and $p|t$.  Let $t=pt_1$. From this, we have
$$G_i\Y\langle z_1,z_2\rangle=\langle x_iz_1^{-l}, y_iz_1^{-s}z_2^{-t_1},z_1\rangle\cong M_p(m+1,1,1)\Y \mathbb{Z}_{p^{n+1}}.$$
Similarly, we may obtain the same result for the case $p|j$ and $(t,p)=1$.

Assume that  $p|j$ and $p|t$. Let $j=pj_2$ and $t=pt_2$.  From this, we have
$$G_i\Y\langle z_1,z_2\rangle=\langle x_iz_1^{-l}z_2^{-j_2}, y_iz_1^{-s}z_2^{-t_2},z_1,z_2\rangle\cong M_p(1,1,1)\Y \mathbb{Z}_{p^{n+1}}\times \mathbb{Z}_{p^m}. $$

(iv)
Suppose that
$$G_i=\langle x_i,y_i ,c\ | \ x_i^{p^{w}}=y_i^p=c^p=1, [x_i,y_i]=c,[x_i,c]=[y_i,c]=1 \rangle \cong M_p(w,1,1).$$
Note that $\langle x_i^p,c\rangle\leq \langle z_1^p,z_2\rangle$.  Let $ x_i^p=z_1^{pl}z_2^{j}$, where
$1\leq l\leq p^n, 1\leq j\leq p^m$. Obviously,  $1\leq w\leq n+1$. If $w=n+1$ and $n>m$, then
$1\neq x_i^{p^n}=z_1^{p^nl}z_2^{p^{n-1}j}=z_1^{p^nl}=c^l$, which is a contradiction. From this, we have
$1\leq w\leq n+1$ (If $ n=m$) or $1\leq w< n+1$ (If $ n>m$).

If $j$ and $p$ are coprime, then
 $$G_i\Y\langle z_1,z_2\rangle=\langle x_iz_1^{-l}, y_i,z_1\rangle\cong M_p(m+1,1,1)\Y \mathbb{Z}_{p^{n+1}}. $$
 Assume that $j$ is divisible by $p$ and  $j=pj_1$. From this, we have
 $$G_i\Y\langle z_1,z_2\rangle=\langle x_iz_1^{-l}z_2^{-j_1}, y_i,z_1,z_2\rangle\cong M_p(1,1,1)\Y \mathbb{Z}_{p^{n+1}}\times \mathbb{Z}_{p^{m}}. $$ $\hfill\square$

\end{proof}

According to  Lemmas  \ref{STG} and \ref{CPC3},  the factors  of the central product of $E$ have only  two types
$M_p(m+1,1,1), M_p(1,1,1)$.  Since  $G_1\Y G_2$ is only isomorphic to $M_p(m+1,1,1)\Y M_p(1,1,1)$ if  both $G_1$
and $G_2$ are isomorphic to $M_p(m+1,1,1)$  according to  (iii) of  Lemma \ref{GGC1},  we may obtain the following result
similar to Theorem \ref{ST1}.

\begin{theorem} \label{ST3}
Suppose that $n\geq m\geq 1$ and $c\in \langle z_1\rangle$. Then the isomorphism types of $G$ are as follows.

(i) $M_p(m+1,1,1)\Y\underbrace{M_p(1,1,1)\Y\cdots\Y M_p(1,1,1)}_{k-1}\Y \mathbb{Z}_{p^{n+1}}\times\underbrace{\mathbb{Z}_p\times\cdots\times\mathbb{Z}_p}_{r-2}$, $k\geq 1$;

(ii) $\underbrace{M_p(1,1,1)\Y\cdots\Y M_p(1,1,1)}_{k}\Y \mathbb{Z}_{p^{n+1}}\times \mathbb{Z}_{p^{m}}
\times\underbrace{\mathbb{Z}_p\times\cdots\times\mathbb{Z}_p}_{r-2}$, $k\geq 1$.
\end{theorem}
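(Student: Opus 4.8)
The plan is to mirror the argument of Theorem \ref{ST1}, exploiting the central factorization $G=E\Y\zeta G$ from Lemma \ref{STG} to reduce everything to a bookkeeping of inner abelian factor types. First I would write
$$G=E\Y\zeta G=(G_1\Y\langle z_1,z_2\rangle)\Y\cdots\Y(G_k\Y\langle z_1,z_2\rangle)\Y\langle z_3\rangle\times\cdots\times\langle z_r\rangle,$$
where $E=G_1\Y\cdots\Y G_k$ with each $G_i$ inner abelian and $G_i^p\leq N=\langle z_1^p\rangle\times\langle z_2\rangle$. By \eqref{3.2} the trailing elementary abelian part $\langle z_3\rangle\times\cdots\times\langle z_r\rangle\cong\mathbb{Z}_p\times\cdots\times\mathbb{Z}_p$ meets the rest trivially, so it splits off as a direct factor and may be carried along untouched.

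Next I would invoke Lemma \ref{CPC3}: every $G_i\Y\langle z_1,z_2\rangle$ is isomorphic to $M_p(m+1,1,1)\Y\mathbb{Z}_{p^{n+1}}$ or to $M_p(1,1,1)\Y\mathbb{Z}_{p^{n+1}}\times\mathbb{Z}_{p^m}$, the $\mathbb{Z}_{p^{n+1}}$ factor always being $\langle z_1\rangle$. The decisive structural point is that the order-$p^{m+1}$ generator of an $M_p(m+1,1,1)$ factor absorbs $z_2$, whereas an $M_p(1,1,1)$ factor leaves both $z_1$ and $z_2$ as independent central cyclic direct factors. Hence $z_2$ survives as a $\mathbb{Z}_{p^m}$ summand precisely when none of the $G_i$ contributes an $M_p(m+1,1,1)$.

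I would then collapse the inner abelian factors. Since the only types produced are $M_p(m+1,1,1)$ and $M_p(1,1,1)$, the central product of any two of them is governed by (iii) of Lemma \ref{GGC1}. Here I would argue that in the $A_2$ configuration only the first alternative $M_p(m+1,1,1)\Y M_p(1,1,1)$ can occur: either by re-running the coordinate computation of Lemma \ref{GGC1}(iii) in the present setting, where the first factor of $N$ is $\langle z_1^p\rangle$ rather than $\langle z_1\rangle$, so the exponent of $z_1$ appearing in $x_2^p$ is forced to be divisible by $p$; or by simply observing that the outcome $M_p(m+1,1)$ is not among the admissible factor types of Lemma \ref{CPC3}. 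Iterating, any two copies of $M_p(m+1,1,1)$ may be replaced by $M_p(m+1,1,1)\Y M_p(1,1,1)$, so all but one copy of $M_p(m+1,1,1)$ can be converted into $M_p(1,1,1)$.

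Combining these reductions yields the two cases: if some $G_i$ gives $M_p(m+1,1,1)$, exactly one such factor remains, $z_2$ has been absorbed, and $z_1$ survives as $\mathbb{Z}_{p^{n+1}}$, giving type (i); if every $G_i$ gives $M_p(1,1,1)$, both $z_1$ and $z_2$ survive, giving type (ii). The hard part will be the step just flagged, namely cleanly ruling out the $n=m$ alternative of Lemma \ref{GGC1}(iii), together with the bookkeeping that a single $\langle z_2\rangle\cong\mathbb{Z}_{p^m}$ is consumed by the first $M_p(m+1,1,1)$ and none thereafter; once these are settled, the remaining identifications are routine central-product manipulations of the kind carried out in Lemma \ref{GGC1}.
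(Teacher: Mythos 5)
Your proposal is correct and follows essentially the same route as the paper, whose own (very brief) argument for this theorem likewise reduces via Lemma \ref{CPC3} to factors of types $M_p(m+1,1,1)$ and $M_p(1,1,1)$, collapses pairs of $M_p(m+1,1,1)$'s by (iii) of Lemma \ref{GGC1}, and concludes as in Theorem \ref{ST1}. Of your two suggested justifications for excluding the $M_p(m+1,1)$ alternative, rely on the first one (re-running the computation: since $N\cap\langle z_1\rangle=\langle z_1^p\rangle$, the exponent $t$ of $z_1$ in $x_2^p=x_1^{ps}z_1^{t}$ is forced to be divisible by $p$, and the central element $z_1^{t/p}$ then makes the first-branch substitution of Lemma \ref{GGC1}(iii) available), because the second is not conclusive as stated: Lemma \ref{CPC3} classifies single factors $G_i\Y\langle z_1,z_2\rangle$ and does not by itself constrain what the central product of two such factors can look like, so the absence of $M_p(m+1,1)$ from its list forbids nothing about pairwise products without a further absorption argument.
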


\begin{lemma} \label{CPC4}
Assume that $n\geq m\geq 1$ and $c\in \langle z_2\rangle$.

(i) If $G_i\cong M_p(u,n+1)$, then $2\leq u\leq m+1$ and $G_i\Y  \langle z_1,z_2\rangle\cong
         M_p(m+1,1)\times \mathbb{Z}_{p^{n+1}}$  or $ M_p(1,1,1)\Y\mathbb{Z}_{p^m}\times\mathbb{Z}_{p^{n+1}}. $

(ii) If $n=m$ and $G_i\cong M_p(m+1, v)$, where $1\leq v<m+1$, then $G_i\Y  \langle z_1,z_2\rangle\cong M_p(m+1,1)\times \mathbb{Z}_{p^{m+1}}$. If $n>m$, then $G_i$ is not isomorphic to $M_p(n+1,v)$, where $1\leq v\leq m+1$.

(iii) If $G_i\cong M_p(u,v)$, where $2\leq u<n+1$ and $1\leq v<n+1$, then  we may obtain the same results of $(i)$.

(iv) If $G_i\cong M_p(w,1,1)$, where $1\leq w\leq n+1$, then  $G_i\Y  \langle z_1,z_2\rangle\cong M_p(m+1,1)\times\mathbb{Z}_{p^{n+1}}$ or $D_8\Y  \mathbb{Z}_{p^{m}}\times\mathbb{Z}_{p^{n+1}}$.
\end{lemma}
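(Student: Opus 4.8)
The plan is to run, for the type $A_2$ center with $c\in\langle z_2\rangle$, the same generator-adjustment scheme already used in Lemmas \ref{CPC1}--\ref{CPC3}. Here $\langle z_1,z_2\rangle\cong\mathbb{Z}_{p^{n+1}}\times\mathbb{Z}_{p^m}$, $N=\langle z_1^p\rangle\times\langle z_2\rangle$, and the hypothesis $c\in\langle z_2\rangle$ means $c=z_2^{p^{m-1}w}$ with $(w,p)=1$. For each inner abelian type I fix the presentation from Lemma \ref{INA1}, recall $G_i^p\leq N$ from Lemma \ref{STG}, and write $x_i^p=z_1^{pl}z_2^{j}$ and, for the types with $y_i^p\neq 1$, $y_i^p=z_1^{ps}z_2^{t}$. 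The relations $x_i^{p^u}=y_i^{p^v}=1$ give the parameter bounds, while computing the generator of $G_i'$ as a power of $x_i^p$ (or $y_i^p$) and forcing it into $\langle z_2\rangle$ pins down the $p$-adic valuations of $j,t$ (and of $l,s$). A substitution $x_i\mapsto x_iz_1^{-l}z_2^{-\ast}$, $y_i\mapsto y_iz_1^{-s}z_2^{-\ast}$ then absorbs the central contributions, exactly as in the parallel case $c\in\langle z_1\rangle$ treated in Lemma \ref{CPC3}.

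The structural point specific to $c\in\langle z_2\rangle$ is that, since $\langle z_1\rangle\cap\langle z_2\rangle=1$, the cyclic group $\langle z_1\rangle\cong\mathbb{Z}_{p^{n+1}}$ carries no commutator and must detach as a genuine \emph{direct} factor (this is what replaces the central product $\Y\mathbb{Z}_{p^{n+1}}$ of Lemma \ref{CPC3}). For parts (i) and (iii) I would first read off $u\leq m+1$ from $|\zeta G_i|\leq|N|=p^{n+m}$, as in Lemma \ref{CPC1}. The remaining factor $\langle x_i,y_i\rangle\,\langle z_2\rangle$ then resolves in one of two ways according to the valuations of $j$ and $t$: when these allow an adjusted generator of order $p^{m+1}$ to be paired with an order-$p$ generator so that $c$ is preserved, one obtains $M_p(m+1,1)\times\mathbb{Z}_{p^{n+1}}$; otherwise both adjusted generators drop to order $p$, the inner abelian part becomes $M_p(1,1,1)$, and $\langle z_2\rangle$ survives as a central-product factor sharing $\langle c\rangle$, giving $M_p(1,1,1)\Y\mathbb{Z}_{p^m}\times\mathbb{Z}_{p^{n+1}}$. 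Part (iv) is the cleanest instance, with a single generator to adjust: writing $x_i^p=z_1^{pl}z_2^{j}$ and replacing $x_i$ by $x_iz_1^{-l}$ when $(j,p)=1$ makes $x_iz_1^{-l}$ an element of order $p^{m+1}$, yielding $M_p(m+1,1)\times\mathbb{Z}_{p^{n+1}}$, whereas replacing $x_i$ by $x_iz_1^{-l}z_2^{-j_1}$ when $j=pj_1$ produces an order-$p$ generator and hence $M_p(1,1,1)\Y\mathbb{Z}_{p^m}\times\mathbb{Z}_{p^{n+1}}$ (the factor printed as $D_8$ in the statement is its odd-order analogue $M_p(1,1,1)$).

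Part (ii) requires isolating the dichotomy on $n$ versus $m$. When $n=m$ the type $M_p(m+1,v)$ with $v<m+1$ has $c=x_i^{p^m}$ realized by the order-$p^{m+1}$ generator $x_i$ itself, so after reducing $y_i$ to order $p$ one gets the single form $M_p(m+1,1)\times\mathbb{Z}_{p^{m+1}}$; no $M_p(1,1,1)$ branch appears here, because collapsing $x_i$ to order $p$ would kill $c$. When $n>m$ I would rule out $M_p(n+1,v)$ by computing $c=x_i^{p^n}=z_1^{p^nl}z_2^{p^{n-1}j}$; since $n-1\geq m$ the factor $z_2^{p^{n-1}j}$ is trivial, so $c\in\langle z_1\rangle$, contradicting both $c\in\langle z_2\rangle$ and $\langle z_1\rangle\cap\langle z_2\rangle=1$.

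The main obstacle I anticipate is not the arithmetic of the valuations but the bookkeeping that certifies the claimed decomposition: after each substitution I must verify that the adjusted generators still realize the commutator $c$ and the advertised orders, and---crucially for telling a direct product from a central product---that the inner abelian factor meets $\langle z_1\rangle$ (resp. $\langle z_2\rangle$) in the intended way. Concretely this reduces to locating the center of the adjusted inner abelian factor inside $\langle z_2\rangle$, checking it is disjoint from $\langle z_1\rangle$, and confirming by an order count---one checks $|G_i\Y\langle z_1,z_2\rangle|=p^{n+m+3}$ throughout---that the listed factors already exhaust the group. This is precisely where the placement of $c$ in the short cyclic factor $\langle z_2\rangle$ rather than in $\langle z_1\rangle$ does the real work, and where sign and exponent slips are most likely.
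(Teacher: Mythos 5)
Your proposal is correct and follows essentially the same route as the paper: the same parametrization $x_i^p=z_1^{pl}z_2^{j}$, $y_i^p=z_1^{ps}z_2^{t}$, the same valuation analysis on $j,t$ forced by the relations and by $c\in\langle z_2\rangle$ (including the bound $u\leq m+1$ from $|\zeta G_i|\leq|N|$ and the $n>m$ contradiction $c=x_i^{p^n}=z_1^{p^nl}$), the same generator substitutions, and the same dichotomy producing $M_p(m+1,1)\times\mathbb{Z}_{p^{n+1}}$ versus $M_p(1,1,1)\Y\mathbb{Z}_{p^m}\times\mathbb{Z}_{p^{n+1}}$. You also correctly identify that the $D_8$ in part (iv) of the statement is a typo for $M_p(1,1,1)$, consistent with the paper's own proof.
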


\begin{proof}
(i) Suppose that
$$G_i=\langle x_i,y_i \ | \ x_i^{p^{u}}=y_i^{p^{n+1}}=1, x_i^{y_i}=x_i^{1+p^{u-1}} \rangle \cong M_p(u,n+1).$$
Obviously,   $\zeta G_i=\langle x_i^p, y_i^p\rangle\leq N$. From this, we have $p^{n+u-1}\leq |N|=p^{n+m}$ and
$2\leq u\leq m+1$. Let $ x_i^p=z_1^{pl}z_2^{j}$ and $ y_i^p=z_1^{ps}z_2^{t}$, where
$1\leq l,s\leq p^{n}, 1\leq j,t\leq p^m$.

Suppose that $u=m+1$. Obviously, $\zeta G_i=N$.
If $j$ is divisible by $p$, then  $c= x_i^{p^m}=z_1^{p^{m}l}z_2^{p^{m-1}j}=z_1^{p^{m}l}$, which is a contradiction.
Thus $j$ and $p$ are coprime.

 Assume that  $t$ is divisible by $p$ and  $t=pt_1$.  It follows that
 $$G_i\Y\langle z_1,z_2\rangle=\langle x_iz_1^{-l}, y_iz_1^{-s}z_2^{-t_1}, z_1\rangle\cong M_p(m+1,1)\times \mathbb{Z}_{p^{n+1}}.$$
 If $t$ and $p$ are coprime, then we may assume that $x_i^p=z_1^{pl}z_2$ and
$y_i^p=z_1^{ps}z_2$ without loss of generality. It follows that
$$G_i\Y\langle z_1,z_2\rangle=\langle x_iz_1^{-l}, x_iy_i^{-1}z_1^{s-l}, z_1\rangle\cong M_p(m+1,1)\times \mathbb{Z}_{p^{n+1}}.$$

We next suppose $2\leq u<m+1$. Note that $1=x_i^{p^u}=z_1^{p^{u}l}z_2^{p^{u-1}j}$. Thus
$j$ is divisible by  $p^{m-u+1}$. Let $j=pj_1$.
If $t$ and $p$ are coprime, then $$ G_i\Y\langle z_1,z_2\rangle=\langle y_iz_1^{-s}, x_iz_1^{-l}z_2^{-j_1},z_1\rangle\cong M_p(m+1,1)\times\mathbb{Z}_{p^{n+1}}.$$
Assume that $t$ is divisible by $p$ and $t=pt_2$. It follows that
 $$ G_i\Y\langle z_1,z_2\rangle=\langle x_iz_1^{-l}z_2^{-j_1},y_iz_1^{-s}z_2^{-t_2}\rangle\Y\langle z_1,z_2 \rangle\cong M_p(1,1,1)\Y\mathbb{Z}_{p^m}\times\mathbb{Z}_{p^{n+1}}.$$

(ii)  Suppose that  $$G_i=\langle x_i,y_i \ | \ x_i^{p^{n+1}}=y_i^{p^{v}}=1, x_i^{y_i}=x_i^{1+p^{n}} \rangle \cong M_p(n+1,v),$$
where  $1\leq v\leq m+1$. Obviously, $\zeta G_i=\langle x_i^p,y_i^p\rangle\leq N=\langle z_1^p,z_2\rangle$. At this case,  assume that $x_i^p=z_1^{pl}z_2^{j}$ and $y_i^p=z_1^{ps}z_2^{t}$.

If $n=m$, then we only consider the case  $1\leq v< m+1$.  Since $c= x_i^{p^m}=z_1^{p^ml}z_2^{p^{m-1}j}=z_1^{p^ml}c^j$,
$j$ and $p$ are coprime.
Since $1= y_i^{p^v}=z_1^{p^{v}s}z_2^{p^{v-1}t}$,   $t$ are divisible by $p^{m-v+1}$. From this,  we have $p|t$ and assume that $t=pt_1$.
It follows that
$$G_i\Y\langle z_1,z_2\rangle=\langle x_iz_1^{-l}, y_iz_1^{-s}z_2^{-t_1},z_1\rangle\cong M_p(m+1,1)\times \mathbb{Z}_{p^{m+1}}.$$
If  $n>m$, then $c= x_i^{p^n}=z_1^{p^nl}z_2^{p^{n-1}j}=z_1^{p^nl}$, which is a contradiction.

(iii)
Suppose that
$$G_i=\langle x_i,y_i \ | \ x_i^{p^{u}}=y_i^{p^{v}}=1, x_i^{y_i}=x_i^{1+p^{u-1}} \rangle \cong M_p(u,v),$$
where $u,v\leq n$.
 Let $ x_i^p=z_1^{pl}z_2^{j}$ and $ y_i^p=z_1^{ps}z_2^{t}$, where
$1\leq l,s\leq p^n, 1\leq j,t\leq p^m$.

Assume that $(j,p)=1$ and $p|t$. Let $t=pt_1$. Hence
$$G_i\Y\langle z_1,z_2\rangle=\langle x_iz_1^{-l}, y_iz_1^{-s}z_2^{-t_1},z_1\rangle\cong M_p(m+1,1)\times \mathbb{Z}_{p^{n+1}}.$$
Similarly, we may obtain the same result for the case $p|j$ and $(t,p)=1$.

Assume that  $p|j$ and $p|t$. Let $j=pj_2$ and $t=pt_2$. Hence
$$G_i\Y\langle z_1,z_2\rangle=\langle x_iz_1^{-l}z_2^{-j_2}, y_iz_1^{-s}z_2^{-t_2},z_1,z_2\rangle\cong M_p(1,1,1)\Y\mathbb{Z}_{p^m}\times \mathbb{Z}_{p^{n+1}}. $$

If $(j,p)=1=(t,p)$, then  we may let $ x_i^p=z_1^{pl}z_2$ and $ y_i^p=z_1^{ps}z_2$ without loss of generality. At this case,
$$G_i\Y
\langle z_1,z_2\rangle=\langle x_iz_1^{-l}, x_iy_i^{-1}z_1^{s-l}, z_1\rangle\cong M_p(m+1,1)\times \mathbb{Z}_{p^{n+1}}.$$

(iv)
Suppose that
$$G_i=\langle x_i,y_i ,c\ | \ x_i^{p^{w}}=y_i^p=c^p=1, [x_i,y_i]=c,[x_i,c]=[y_i,c]=1 \rangle \cong M_p(w,1,1).$$
Since $\langle x_i^p,c\rangle\leq \langle z_1^p,z_2\rangle$,  we may let $ x_i^p=z_1^{pl}z_2^{j}$, where
$1\leq l\leq p^n, 1\leq j\leq p^m$.
If $j$ and $p$ are coprime, then
 $$G_i\Y\langle z_1,z_2\rangle=\langle x_iz_1^{-l}, y_i,z_1\rangle\cong M_p(m+1,1)\times \mathbb{Z}_{p^{n+1}}. $$
 Assume that $j$ is divisible by $p$ and  $j=pj_1$. It follows that
 $$G_i\Y
 \langle z_1,z_2\rangle=\langle x_iz_1^{-l}z_2^{-j_1}, y_i,z_1,z_2\rangle\cong M_p(1,1,1)\Y \mathbb{Z}_{p^{m}}\times\mathbb{Z}_{p^{n+1}}. $$$\hfill\square$

\end{proof}

By Lemmas  \ref{STG} and \ref{CPC4},  the factors  of the central product of $E$ have only  two types
$M_p(m+1,1), M_p(1,1,1)$.
Since  $G_1\Y G_2$ is only isomorphic to $M_p(m+1,1)\Y M_p(1,1,1)$ if  both $G_1$
and $G_2$ are isomorphic to $M_p(m+1,1)$,  we may obtain the following results.

\begin{theorem} \label{ST4}
Suppose that $n\geq m\geq 1$ and $c\in \langle z_2\rangle$. Then the isomorphism types of $G$ are as follows.

(i) $M_p(m+1,1)\Y\underbrace{M_p(1,1,1)\Y\cdots\Y M_p(1,1,1)}_{k-1}\times \mathbb{Z}_{p^{n+1}}\times\underbrace{\mathbb{Z}_p\times\cdots\times\mathbb{Z}_p}_{r-2}$, $k\geq 1$;

(ii) $\underbrace{M_p(1,1,1)\Y\cdots\Y M_p(1,1,1)}_{k}\Y  \mathbb{Z}_{p^{m}}\times\mathbb{Z}_{p^{n+1}}
\times\underbrace{\mathbb{Z}_p\times\cdots\times\mathbb{Z}_p}_{r-2}$, $k\geq 1$.

\end{theorem}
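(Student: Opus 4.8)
The plan is to follow the pattern of Theorems \ref{ST1}--\ref{ST3}. By Lemma \ref{STG} I would write $G=E\Y\zeta G$ with $E=G_1\Y\cdots\Y G_k$, and peel off the elementary abelian tail $\langle z_3\rangle\times\cdots\times\langle z_r\rangle\cong\mathbb{Z}_p^{\,r-2}$ of $\zeta G$. Since $N=\langle z_1^p\rangle\times\langle z_2\rangle\leq\langle z_1,z_2\rangle$ and $G_i^p\leq N$ for every $i$, this lets me rewrite $G$ as
$$G=(G_1\Y\langle z_1,z_2\rangle)\Y\cdots\Y(G_k\Y\langle z_1,z_2\rangle)\Y\bigl(\langle z_3\rangle\times\cdots\times\langle z_r\rangle\bigr),$$
with the central subgroup $\langle z_1,z_2\rangle$ identified across all the factors.

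First I would apply Lemma \ref{CPC4}, by which each augmented factor $G_i\Y\langle z_1,z_2\rangle$ is isomorphic either to $M_p(m+1,1)\times\mathbb{Z}_{p^{n+1}}$ or to $M_p(1,1,1)\Y\mathbb{Z}_{p^m}\times\mathbb{Z}_{p^{n+1}}$. The decisive point is that $\langle c\rangle\leq\langle z_2\rangle$ meets $\langle z_1\rangle\cong\mathbb{Z}_{p^{n+1}}$ trivially, so the shared copy of $\langle z_1\rangle$ commutes with, and intersects trivially, the $M_p(\cdot)$-part of every factor; it should therefore detach as a single direct factor $\mathbb{Z}_{p^{n+1}}$ of $G$ rather than being fused through the central product.

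Next I would collapse the repeated nonabelian factors by means of part (iv) of Lemma \ref{GGC2}: whenever two augmented factors each contribute a copy of $M_p(m+1,1)$, their central product is $M_p(m+1,1)\Y M_p(1,1,1)$, so iterating this reduction should leave at most one $M_p(m+1,1)$ summand while all the others degrade to $M_p(1,1,1)$. It then remains to split on $\mathrm{Exp}(E)$. If $\mathrm{Exp}(E)=p^{m+1}$, some factor is $M_p(m+1,1)\times\mathbb{Z}_{p^{n+1}}$ and, after the reduction, exactly one $M_p(m+1,1)$ survives with $\langle z_2\rangle$ absorbed into it, giving type (i); if $\mathrm{Exp}(E)<p^{m+1}$, every augmented factor is of the second kind, so $\langle z_2\rangle\cong\mathbb{Z}_{p^m}$ persists as a central factor carrying the shared commutator $c$, giving type (ii).

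The step I expect to be the main obstacle is the bookkeeping of the shared subgroup $\langle z_1,z_2\rangle$ across the factors. One must check that $\langle z_1\rangle$ genuinely separates as a direct factor $\mathbb{Z}_{p^{n+1}}$ (which rests on $c\notin\langle z_1\rangle$), that $\langle z_2\rangle$ is either swallowed by the surviving $M_p(m+1,1)$ or retained as $\mathbb{Z}_{p^m}$, and that the reduction furnished by Lemma \ref{GGC2}\,(iv) stays valid in the boundary case $n=m$, where the two cyclic summands of $N$ can no longer be told apart by their orders.
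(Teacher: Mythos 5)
Your proposal is correct and follows essentially the same route as the paper: the paper's (very terse) proof likewise uses Lemma \ref{CPC4} to reduce each augmented factor $G_i\Y\langle z_1,z_2\rangle$ to $M_p(m+1,1)\times\mathbb{Z}_{p^{n+1}}$ or $M_p(1,1,1)\Y\mathbb{Z}_{p^m}\times\mathbb{Z}_{p^{n+1}}$, then collapses repeated $M_p(m+1,1)$ factors by the product rule $M_p(m+1,1)\Y M_p(m+1,1)\cong M_p(m+1,1)\Y M_p(1,1,1)$, and splits according to whether such a factor occurs. The boundary worry you flag for $n=m$ resolves exactly as you suspect: since $c\in\langle z_2\rangle$, the complement $\langle z_1^p\rangle$ of $\langle x_1^p\rangle$ in $N$ meets $\langle c\rangle$ trivially, so the collapse is unambiguous (no $M_p(m+1,1,1)$ alternative as in Lemma \ref{GGC1}(iv)) — this is precisely the assertion the paper itself states without proof in the sentence preceding the theorem.
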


Suppose that
$N=\langle z_1\rangle\times\langle z_2^p\rangle.$
 Similar to the case $A_2$, we have the following the results for the case $A_3$.

\begin{corollary} \label{ST5}
Suppose that $n\geq m\geq 1$ and $c\in \langle z_1\rangle$. Then the isomorphism types of $G$ are as follows.

(i) $M_p(n+1,1)\Y\underbrace{M_p(1,1,1)\Y\cdots\Y M_p(1,1,1)}_{k-1}\times \mathbb{Z}_{p^{m+1}}\times\underbrace{\mathbb{Z}_p\times\cdots\times\mathbb{Z}_p}_{r-2}$, $k\geq 1$;

(ii) $\underbrace{M_p(1,1,1)\Y\cdots\Y M_p(1,1,1)}_{k}\Y \mathbb{Z}_{p^{n}}\times \mathbb{Z}_{p^{m+1}}
\times\underbrace{\mathbb{Z}_p\times\cdots\times\mathbb{Z}_p}_{r-2}$, $k\geq 1$.

\end{corollary}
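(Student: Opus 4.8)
The plan is to deduce this corollary from Theorem~\ref{ST4} by exploiting the symmetry between the two cyclic factors of $N$. In the $A_3$ configuration we have $N=\langle z_1\rangle\times\langle z_2^p\rangle$ with $|z_1|=p^n$ and $|z_2|=p^{m+1}$, so the generator that is \emph{enlarged} inside $\zeta G$ is $z_2$, while the commutator $c$ lies in the \emph{non-enlarged} generator $z_1$. This is precisely the mirror image of the $A_2$ configuration of Theorem~\ref{ST4}, where $N=\langle z_1^p\rangle\times\langle z_2\rangle$, the enlarged generator is $z_1$ (of order $p^{n+1}$) and $c\in\langle z_2\rangle$ (of order $p^m$). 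Interchanging the two central generators $z_1\leftrightarrow z_2$ and simultaneously the parameters $n\leftrightarrow m$ carries the $A_2$ setup with $c\in\langle z_2\rangle$ to the present $A_3$ setup with $c\in\langle z_1\rangle$, and it sends the two types of Theorem~\ref{ST4} to the two types claimed here. First I would record this relabeling precisely, noting that since $N\cong\mathbb{Z}_{p^n}\times\mathbb{Z}_{p^m}$ is the same abstract group in both cases, every defining relation and order computation is preserved under the swap.

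Next I would establish the analogue of Lemma~\ref{CPC4} for the present case. Starting from the decomposition $G=E\Y\zeta G$ of Lemma~\ref{STG}, with $E=G_1\Y\cdots\Y G_k$ and each $G_i$ inner abelian of type $M_p(u,v)$ or $M_p(w,1,1)$, I would run the same order bookkeeping as in Lemma~\ref{CPC4}, now with the roles of the two generators interchanged: $z_2$ (order $p^{m+1}$) plays the part of $z_1$ there, and $z_1$ (order $p^n$, carrying $c$) plays the part of $z_2$. The outcome is that each $G_i\Y\langle z_1,z_2\rangle$ is isomorphic either to $M_p(n+1,1)\times\mathbb{Z}_{p^{m+1}}$ or to $M_p(1,1,1)\Y\mathbb{Z}_{p^n}\times\mathbb{Z}_{p^{m+1}}$. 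In particular, after collecting the central cyclic parts into the abelian direct factor, the inner abelian factors of the central product reduce to the two types $M_p(n+1,1)$ and $M_p(1,1,1)$.

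Finally I would assemble $G$ exactly as in the proof of Theorem~\ref{ST4}. The key combinatorial input is the analogue of Lemma~\ref{GGC2}(iv): the central product of two factors isomorphic to $M_p(n+1,1)$ is isomorphic to $M_p(n+1,1)\Y M_p(1,1,1)$, which follows from the same commutator computation with $n$ and $m$ interchanged. Consequently at most one factor of type $M_p(n+1,1)$ survives, and two cases arise: if some $G_i\Y\langle z_1,z_2\rangle\cong M_p(n+1,1)\times\mathbb{Z}_{p^{m+1}}$ occurs we obtain type~(i), and otherwise every inner abelian factor is $M_p(1,1,1)$ and we obtain type~(ii).

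The main obstacle I anticipate is bookkeeping rather than conceptual: I must ensure that the hypothesis $n\geq m$, which is \emph{not} symmetric under $n\leftrightarrow m$, is threaded correctly when the enlarged generator is the smaller one, $z_2$ of order $p^{m+1}$. The delicate point is the boundary case $n=m$, where the canonical ordering $n_1\geq n_2$ of $\zeta G$ in \eqref{3.1} forces the $A_2$ and $A_3$ labelings to coincide after relabeling; there one checks directly that Theorem~\ref{ST4} at $n=m$ already yields exactly the two types $M_p(m+1,1)=M_p(n+1,1)$ with $\mathbb{Z}_{p^{m+1}}$, so the symmetry argument is consistent and produces no new configurations. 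For $n>m$ the generator $z_1$ remains the factor of largest order, the canonical form is respected, and the transported arguments of Lemma~\ref{CPC4} and Theorem~\ref{ST4} apply verbatim.
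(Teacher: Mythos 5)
Your proposal is correct and is essentially the paper's own route: the paper works out the $A_2$ case in detail (Lemma \ref{CPC4}, Theorem \ref{ST4}) and then justifies Corollary \ref{ST5} only by the remark ``similar to the case $A_2$'', which is precisely the $z_1\leftrightarrow z_2$, $n\leftrightarrow m$ transport you make explicit. Your added care about the asymmetric hypothesis $n\geq m$ is warranted (strictly speaking the transported arguments are not ``verbatim'', since which generator forces elements of order $p^{n+1}$ changes), but the intermediate claims you isolate --- each $G_i\Y\langle z_1,z_2\rangle$ being $M_p(n+1,1)\times\mathbb{Z}_{p^{m+1}}$ or $M_p(1,1,1)\Y\mathbb{Z}_{p^n}\times\mathbb{Z}_{p^{m+1}}$, and $M_p(n+1,1)\Y M_p(n+1,1)\cong M_p(n+1,1)\Y M_p(1,1,1)$ --- are true and yield exactly types (i) and (ii).
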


\begin{corollary} \label{ST6}
Suppose that $n\geq m\geq 1$ and $c\in \langle z_2\rangle$. Then the isomorphism types of $G$ are as follows.

(i) $M_p(n+1,1,1)\Y\underbrace{M_p(1,1,1)\Y\cdots\Y M_p(1,1,1)}_{k-1}\Y \mathbb{Z}_{p^{m+1}}\times\underbrace{\mathbb{Z}_p\times\cdots\times\mathbb{Z}_p}_{r-2}$, $k\geq 1$;

(ii) $\underbrace{M_p(1,1,1)\Y\cdots\Y M_p(1,1,1)}_{k}\Y  \mathbb{Z}_{p^{m+1}}\times\mathbb{Z}_{p^{n}}
\times\underbrace{\mathbb{Z}_p\times\cdots\times\mathbb{Z}_p}_{r-2}$, $k\geq 1$.
\end{corollary}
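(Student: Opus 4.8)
The plan is to deduce Corollary~\ref{ST6} from the type $A_2$ analysis by exploiting the symmetry of the underlying extension. The crucial observation is that the classification problem is invariant under the involution $\sigma$ that interchanges the two cyclic direct factors of $N\cong\mathbb{Z}_{p^n}\times\mathbb{Z}_{p^m}$, equivalently the two central generators $z_1,z_2$, together with the relabelling $n\leftrightarrow m$. In type $A_2$ we had $N=\langle z_1^p\rangle\times\langle z_2\rangle$ with the ``doubled'' generator $z_1$ of order $p^{n+1}$; applying $\sigma$ moves the doubled generator to the second factor, producing exactly the type $A_3$ configuration $N=\langle z_1\rangle\times\langle z_2^p\rangle$ with $z_2$ of order $p^{m+1}$. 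Under $\sigma$ the hypothesis $c\in\langle z_2\rangle$ of Corollary~\ref{ST6} corresponds to the hypothesis $c\in\langle z_1\rangle$ of Theorem~\ref{ST3}, and the two conclusions of Theorem~\ref{ST3} are carried term by term onto those of Corollary~\ref{ST6}: the inner abelian factor $M_p(m+1,1,1)$ becomes $M_p(n+1,1,1)$, the surviving cyclic factor $\mathbb{Z}_{p^{n+1}}=\langle z_1\rangle$ becomes $\mathbb{Z}_{p^{m+1}}=\langle z_2\rangle$, and the plain factor $\mathbb{Z}_{p^m}$ becomes $\mathbb{Z}_{p^n}$. This correspondence is precisely why the statement is recorded as a corollary.

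To make this rigorous I would re-run, with $z_1$ and $z_2$ interchanged, the two computational steps that produced Theorem~\ref{ST3}. First I would establish the $A_3$ analogue of Lemma~\ref{CPC3}: for each inner abelian factor $G_i$ arising in Lemma~\ref{STG}, write $x_i^p$ and $y_i^p$ in terms of $z_1$ and $z_2^p$ (so that now the factor of $p$ sits on the $z_2$ component), and repeat the divisibility case analysis on the resulting exponents, absorbing the redundant central elements into new generators of the form $x_iz_1^{-l}z_2^{-j}$ and $y_iz_1^{-s}z_2^{-t}$ exactly as in the proof of Lemma~\ref{CPC3}. This shows that every $G_i\Y\langle z_1,z_2\rangle$ is isomorphic to $M_p(n+1,1,1)\Y\mathbb{Z}_{p^{m+1}}$ or to $M_p(1,1,1)\Y\mathbb{Z}_{p^{n}}\times\mathbb{Z}_{p^{m+1}}$, so that only the two inner abelian types $M_p(n+1,1,1)$ and $M_p(1,1,1)$ occur. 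Second, since two copies of $M_p(n+1,1,1)$ combine, by the analogue of Lemma~\ref{GGC1}(iii) with $z_1,z_2$ swapped, to $M_p(n+1,1,1)\Y M_p(1,1,1)$ (the alternative product that can arise when $n=m$ being disposed of as in the type $A_2$ case), assembling the central product $E\Y\zeta G$ yields exactly the two isomorphism types (i) and (ii).

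The main obstacle is that the standing convention $n\geq m$ is reversed by $\sigma$, so one cannot simply quote Theorem~\ref{ST3} as a black box: after the swap the plain generator $z_1$ has order $p^{n}$, which is no longer smaller than the order $p^{m+1}$ of the doubled generator $z_2$ when $n>m$. Consequently the delicate part of the argument is to verify that the divisibility deductions of Lemma~\ref{CPC3} --- for instance the steps that force an exponent to be coprime to $p$ in order to prevent $c=z_2^{p^{m}}$ from lying in the wrong cyclic subgroup --- remain valid when it is the non-$c$ generator $z_1$ that may carry the top exponent $p^{n}$. In particular the subcase split between $n>m$ and $n=m$ must be redone in this reversed regime, since now the factor not containing $c$ can realise the largest order. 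Once this bookkeeping is checked, the remainder is a verbatim translation of the type $A_2$ argument, and the stated isomorphism types follow.
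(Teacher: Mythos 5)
Your proposal is correct and coincides with the paper's own treatment: the paper offers no separate argument for Corollary~\ref{ST6}, only the remark that the type $A_3$ case is ``similar to the case $A_2$,'' i.e., one re-runs the Lemma~\ref{CPC3}/Theorem~\ref{ST3} analysis with $z_1$ and $z_2$ (hence $n$ and $m$) interchanged, which is exactly your plan, including the correct pairing of Theorem~\ref{ST3} (where $c$ lies in the doubled factor) with Corollary~\ref{ST6} and the use of the Lemma~\ref{GGC1}(iii) analogue to rule out a third type. Your additional caution --- that the standing convention $n\geq m$ is not preserved by the swap, so the divisibility deductions and the $n>m$ versus $n=m$ splits must be re-verified rather than Theorem~\ref{ST3} being quoted as a black box --- is precisely the bookkeeping implicit in the paper's ``similar'' remark.
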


\subsection{ The isomorphism type $A_4$ of $\zeta G$}

In the section,
$$\zeta G=\langle z_1\rangle\times\langle z_2\rangle\times\cdots\times\langle z_r\rangle\cong\mathbb{Z}_{p^{n+1}}\times \mathbb{Z}_{p^{m+1}}\times\mathbb{Z}_p\times\cdots\times\mathbb{Z}_p, N=\langle z_1^p\rangle\times\langle z_2^p\rangle.$$
 Without loss of generality, we may always  let $c\in \langle z_1^p\rangle$ or $c\in \langle z_2^p\rangle$.

\begin{lemma} \label{CPC7}
Suppose $n\geq m\geq 1$ and $c\in \langle z_1\rangle$ or $\langle z_2\rangle$.

(i) If $G_i\cong M_p(u,v)$, where $2\leq u\leq n+1$ and $1\leq v\leq n+1$, then $G_i\Y  \langle z_1,z_2\rangle\cong M_p(1,1,1)\Y (\mathbb{Z}_{p^{n+1}}\times \mathbb{Z}_{p^{m+1}})$.

(ii) If $G_i\cong M_p(w,1,1)$, where $1\leq w\leq n+1$,  then $G_i\Y  \langle z_1,z_2\rangle\cong M_p(1,1,1)\Y (\mathbb{Z}_{p^{n+1}}\times \mathbb{Z}_{p^{m+1}})$.
\end{lemma}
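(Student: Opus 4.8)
The plan is to exploit the defining feature of type $A_4$, namely that $N=\langle z_1^p\rangle\times\langle z_2^p\rangle=(\zeta G)^p$. Since $G/N\cong\mathbb{Z}_p\times\cdots\times\mathbb{Z}_p$ is elementary abelian, the $p$-th power of every element of $G$ lies in $N$; in particular the $p$-th powers of the generators of $G_i$ are genuine $p$-th powers of central elements. This is precisely what lets every inner abelian factor collapse to $M_p(1,1,1)$ after a change of generators, with none of the divisibility case analysis needed for $A_1$, $A_2$, $A_3$ in Lemmas \ref{CPC1}--\ref{CPC4}. Throughout I use that $G_i'=\langle c\rangle$, which holds because $G'=\langle c\rangle$ and $G_i$ is nonabelian.

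For (i) I would start from the presentation
$$G_i=\langle x_i,y_i \mid x_i^{p^{u}}=y_i^{p^{v}}=1,\ x_i^{y_i}=x_i^{1+p^{u-1}}\rangle\cong M_p(u,v),$$
for which $G_i'=\langle x_i^{p^{u-1}}\rangle=\langle c\rangle$. Writing $x_i^p=z_1^{pl}z_2^{pj}$ and $y_i^p=z_1^{ps}z_2^{pt}$ (possible since $x_i^p,y_i^p\in N=\langle z_1^p\rangle\times\langle z_2^p\rangle$), I would set $x_i'=x_iz_1^{-l}z_2^{-j}$ and $y_i'=y_iz_1^{-s}z_2^{-t}$. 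Because $z_1,z_2$ are central, $(x_i')^p=x_i^p(z_1^{-l}z_2^{-j})^p=1$ and likewise $(y_i')^p=1$, while $[x_i',y_i']=[x_i,y_i]$ generates $\langle c\rangle$ and is unchanged. Hence $\langle x_i',y_i'\rangle\cong M_p(1,1,1)$, and since $\langle x_i,y_i,z_1,z_2\rangle=\langle x_i',y_i',z_1,z_2\rangle$ with $z_1,z_2$ centralizing $x_i',y_i'$, one obtains
$$G_i\Y\langle z_1,z_2\rangle=\langle x_i',y_i'\rangle\Y\langle z_1,z_2\rangle\cong M_p(1,1,1)\Y(\mathbb{Z}_{p^{n+1}}\times\mathbb{Z}_{p^{m+1}}).$$

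For (ii) the argument is shorter since $y_i$ already has order $p$. Starting from
$$G_i=\langle x_i,y_i,c\mid x_i^{p^{w}}=y_i^p=c^p=1,\ [x_i,y_i]=c,\ [x_i,c]=[y_i,c]=1\rangle,$$
only $x_i$ carries a nontrivial $p$-th power; writing $x_i^p=z_1^{pl}z_2^{pj}\in N$ and replacing $x_i$ by $x_i'=x_iz_1^{-l}z_2^{-j}$ gives $(x_i')^p=1$ and $[x_i',y_i]=c$, so again $\langle x_i',y_i\rangle\cong M_p(1,1,1)$ and the central product factors exactly as above. The only point requiring care---mild here---is identifying the amalgamated subgroup: one checks $\langle x_i',y_i'\rangle\cap\langle z_1,z_2\rangle=\langle c\rangle$, which holds because the center of $M_p(1,1,1)$ is $\langle c\rangle$ and, by hypothesis, $c\in\langle z_1\rangle$ or $c\in\langle z_2\rangle$, so $c=z_1^{p^n}$ or $c=z_2^{p^m}$ lies in $\langle z_1,z_2\rangle$. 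Since $\langle z_1,z_2\rangle\cong\mathbb{Z}_{p^{n+1}}\times\mathbb{Z}_{p^{m+1}}$, this confirms the stated isomorphism type uniformly in both cases.
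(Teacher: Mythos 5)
Your proof is correct and follows essentially the same route as the paper: in the $A_4$ case one writes $x_i^p=z_1^{pl}z_2^{pj}$, $y_i^p=z_1^{ps}z_2^{pt}$ using $N=\langle z_1^p\rangle\times\langle z_2^p\rangle$, and the paper uses exactly your substitutions $x_iz_1^{-l}z_2^{-j}$, $y_iz_1^{-s}z_2^{-t}$ to exhibit the $M_p(1,1,1)$ factor and the central product decomposition. Your added remarks (why the modified generators are nontrivial of order $p$, and the identification of the amalgamated subgroup $\langle c\rangle$) are details the paper leaves implicit, not a different argument.
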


\begin{proof}
(i) Suppose that
$$G_i=\langle x_i,y_i \ | \ x_i^{p^{u}}=y_i^{p^{v}}=1, x_i^{y_i}=x_i^{1+p^{u-1}} \rangle \cong M_p(u,v).$$
Since $\zeta G_i=\langle x_i^p, y_i^p\rangle\leq N$,   $ x_i^p=z_1^{pl}z_2^{pj}$ and $ y_i^p=z_1^{ps}z_2^{pt}$, where
$1\leq l,s\leq p^{n}, 1\leq j,t\leq p^m$.  It follows that
 $$G_i\Y\langle z_1,z_2\rangle=\langle x_iz_1^{-l}z_2^{-j},y_iz_1^{-s}z_2^{-t},z_1,z_2\rangle\cong M_p(1,1,1)\Y (\mathbb{Z}_{p^{n+1}}\times \mathbb{Z}_{p^{m+1}}).$$

(ii)
Suppose that
$$G_i=\langle x_i,y_i ,c\ | \ x_i^{p^{w}}=y_i^p=c^p=1, [x_i,y_i]=c,[x_i,c]=[y_i,c]=1 \rangle \cong M_p(w,1,1).$$
where $1\leq w\leq n+1$.
 Let $ x_i^p=z_1^{pl}z_2^{pj}$, where
$1\leq l\leq p^n, 1\leq j\leq p^m$. Hence
 $$G_i\Y\langle z_1,z_2\rangle=\langle x_iz_1^{-l}z_2^{-j},y_i,z_1,z_2\rangle\cong M_p(1,1,1)\Y (\mathbb{Z}_{p^{n+1}}\times \mathbb{Z}_{p^{m+1}}).$$ $\hfill\square$

\end{proof}

According to Lemmas \ref{STG} and \ref{CPC7},  it is easy to obtain the following result.

\begin{theorem} \label{ST7}
Suppose that $n\geq m\geq 1$ and $c\in \langle z_1\rangle$ or $\langle z_2\rangle$. Then the isomorphism type of $G$ is
$$\underbrace{M_p(1,1,1)\Y\cdots\Y M_p(1,1,1)}_{k}\Y (\mathbb{Z}_{p^{n+1}}\times \mathbb{Z}_{p^{m+1}})
\times\underbrace{\mathbb{Z}_2\times\cdots\times\mathbb{Z}_2}_{r-2},  k\geq 1.$$

\end{theorem}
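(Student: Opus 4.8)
The plan is to argue exactly as in the proofs of Theorems \ref{ST3} and \ref{ST4}, exploiting the fact that in the type $A_4$ situation Lemma \ref{CPC7} leaves no room for case analysis. First I would invoke Lemma \ref{STG} to write
\[
G=E\Y\zeta G=(G_1\Y N)\Y(G_2\Y N)\Y\cdots\Y(G_k\Y N)\Y\zeta G,
\]
where each $G_i$ is inner abelian of type $M_p(u,v)$ or $M_p(w,1,1)$ subject to the exponent bounds of Lemma \ref{STG}. Since here $\langle z_1,z_2\rangle\cong\mathbb{Z}_{p^{n+1}}\times\mathbb{Z}_{p^{m+1}}$ already contains $N=\langle z_1^p\rangle\times\langle z_2^p\rangle$ and swallows every power $x_i^p,y_i^p$ of the generators, I would replace $N$ by $\langle z_1,z_2\rangle$ throughout and then peel off the central direct factor $\langle z_3\rangle\times\cdots\times\langle z_r\rangle\cong\mathbb{Z}_p\times\cdots\times\mathbb{Z}_p$. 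This factor commutes with everything and, because $c\in\langle z_1\rangle$ or $\langle z_2\rangle$, meets $G'$ trivially, so it splits off and accounts for the trailing $\underbrace{\mathbb{Z}_p\times\cdots\times\mathbb{Z}_p}_{r-2}$ in the conclusion.

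The decisive step is that Lemma \ref{CPC7} produces a single outcome for every factor: whether $G_i$ is of type $M_p(u,v)$ or $M_p(w,1,1)$, one has $G_i\Y\langle z_1,z_2\rangle\cong M_p(1,1,1)\Y(\mathbb{Z}_{p^{n+1}}\times\mathbb{Z}_{p^{m+1}})$, realized by modified generators $\bar{x}_i,\bar{y}_i$ (of the shape $x_iz_1^{-l}z_2^{-j}$, $y_i$, or similar) whose commutator is the fixed element $c$. Since the abelian factor $\langle z_1,z_2\rangle$ is common to all of the pieces $G_i\Y\langle z_1,z_2\rangle$, forming the central product over $i=1,\ldots,k$ retains just one copy of $\mathbb{Z}_{p^{n+1}}\times\mathbb{Z}_{p^{m+1}}$ while stacking $k$ copies of $M_p(1,1,1)$. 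As all these $M_p(1,1,1)$ blocks share the same commutator subgroup $\langle c\rangle=G'$, their central product is again a central product of $M_p(1,1,1)$'s amalgamated over $\langle c\rangle$, yielding
\[
G\cong\underbrace{M_p(1,1,1)\Y\cdots\Y M_p(1,1,1)}_{k}\Y(\mathbb{Z}_{p^{n+1}}\times\mathbb{Z}_{p^{m+1}})\times\underbrace{\mathbb{Z}_p\times\cdots\times\mathbb{Z}_p}_{r-2}.
\]

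The one point requiring care — the analogue of the ``main obstacle'' of the earlier theorems — is verifying that this decomposition is genuinely direct away from $\langle c\rangle$: one must confirm that the subgroups $\langle\bar{x}_i,\bar{y}_i\rangle$ pairwise intersect only in $\langle c\rangle$, that each meets $\langle z_1,z_2\rangle$ only in $\langle c\rangle$, and that together they generate $G$. This reduces to a straightforward order count, since $|G'|=p$ forces $\langle c\rangle$ to be the sole overlap and $|G|=p^{2k}\,|\zeta G|=p^{\,2k+n+m+r}$ matches the order of the claimed central product. Unlike the types $A_1$, $A_2$, $A_3$, no branching into several isomorphism types arises here, precisely because $\langle z_1,z_2\rangle$ carries the maximal exponents $p^{n+1}$ and $p^{m+1}$ and therefore absorbs the full height of every $G_i$; this is why the result drops out immediately once Lemmas \ref{STG} and \ref{CPC7} are in hand, and why a single isomorphism type appears in the conclusion.
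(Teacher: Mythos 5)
Your proposal is correct and follows exactly the paper's route: the paper's own proof of Theorem \ref{ST7} is the single observation that Lemmas \ref{STG} and \ref{CPC7} together force every factor $G_i\Y\langle z_1,z_2\rangle$ into the unique type $M_p(1,1,1)\Y(\mathbb{Z}_{p^{n+1}}\times\mathbb{Z}_{p^{m+1}})$, leaving no case analysis. Your additional verifications (the order count and the splitting off of the elementary abelian factor) merely make explicit what the paper leaves as ``easy to obtain,'' so there is nothing to correct.
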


\section{ The normalized unit groups}

Suppose that $G$ is a  finite $p$-group given by a
central extension of the form
$$1\longrightarrow N=\mathbb{Z}_{p^n}\times \mathbb{Z}_{p^m} \longrightarrow G \longrightarrow \mathbb{Z}_p\times
\cdots\times \mathbb{Z}_p \longrightarrow 1$$ and $G'=\langle c\rangle\cong
\mathbb{Z}_p$, $n\geq m\geq 1$ and $p>2$.  According to Lemma \ref{STG}, we still assume that $x_1\zeta G,\ldots, x_{k}\zeta G, y_1\zeta G,\ldots, y_{k}\zeta G$ are the generated elements of  $G/\zeta G$, $z_1,\ldots, z_r$ are the generated elements of $\zeta G$ for the convenience.

\begin{lemma} \label{Conclass} The conjugacy class of every noncentral element of $G$ contains $p$ elements.
\end{lemma}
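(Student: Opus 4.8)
The plan is to compute the conjugacy class of a noncentral element directly via the commutator map, exploiting that $G$ has nilpotency class $2$. First I would recall from the setup that $G'=\langle c\rangle\cong\mathbb{Z}_p$ and that $G'\leq\mathrm{Frat}\,G\leq N\leq\zeta G$; in particular $c$ is central, so $G$ has class $2$ and every commutator lies in the central subgroup $\langle c\rangle$.

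Next, fix a noncentral element $g\in G$. For any $x\in G$ one has $g^x=x^{-1}gx=g[g,x]$ with $[g,x]\in\langle c\rangle$, so the conjugacy class $C_g=\{g^x:x\in G\}$ is contained in the coset $g\langle c\rangle$, whence $|C_g|\leq p$. To pin down the exact size, I would consider the map $\phi_g:G\to\langle c\rangle$ given by $\phi_g(x)=[g,x]$. Since $[g,x]$ is central, the commutator identity $[g,xy]=[g,y]\,[g,x]^{y}=[g,x][g,y]$ shows that $\phi_g$ is a group homomorphism, and clearly $C_g=g\cdot\mathrm{Im}(\phi_g)$, so that $|C_g|=|\mathrm{Im}(\phi_g)|$.

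Finally, because $g\notin\zeta G$ there exists $y\in G$ with $[g,y]\neq 1$, so $\mathrm{Im}(\phi_g)$ is a nontrivial subgroup of $\langle c\rangle\cong\mathbb{Z}_p$; as $\mathbb{Z}_p$ has no proper nontrivial subgroups, $\mathrm{Im}(\phi_g)=\langle c\rangle$ and therefore $|C_g|=p$. There is no serious obstacle here: the only point requiring care is the verification that $\phi_g$ is a homomorphism, which rests entirely on $G'\leq\zeta G$ (equivalently, on $G$ having class $2$), and everything else is an immediate consequence of $|G'|=p$.
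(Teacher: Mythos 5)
Your proof is correct, and it takes a genuinely different route from the paper's. The paper proves this lemma by invoking the generating set established in Lemma \ref{STG}: it writes both $g$ and an arbitrary conjugating element $h$ explicitly as words $\prod_i x_i^{a_{1i}}y_i^{b_{1i}}\prod_j z_j^{c_{1j}}$ in the symplectic-basis generators, and computes directly that $g^h=gc^{s}$ with $s=\sum_i(a_{1i}b_{2i}-b_{1i}a_{2i})$, concluding that the class is $\{g,gc,\ldots,gc^{p-1}\}$. You instead argue abstractly: since $G'=\langle c\rangle$ is central of order $p$, the map $\phi_g(x)=[g,x]$ is a homomorphism $G\to\langle c\rangle$ (your verification via $[g,xy]=[g,y][g,x]^y=[g,x][g,y]$ is sound), $C_g=g\cdot\mathrm{Im}(\phi_g)$, and noncentrality of $g$ forces the image to be all of $\mathbb{Z}_p$. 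Your argument is more elementary and more general: it uses only $|G'|=p$ and $G'\leq\zeta G$ (the latter is in fact automatic for a finite $p$-group with $|G'|=p$), so it applies to any such group and is independent of the structure theory of Section 3, whereas the paper's computation is tied to the specific generator decomposition. What the paper's concrete calculation buys is the explicit bilinear form $s$, which echoes the symplectic-space picture of Lemma \ref{STG}; but both proofs equally yield the fact actually needed later (in Lemma \ref{Centernu}), namely $C_g=gG'$, i.e. $\widehat{C}_g=g\widehat{G'}$, so nothing downstream is lost by your approach.
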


\begin{proof}
If $g\notin \zeta G$, then there exists an element $h$ such that $[g,h]\neq 1$.
We may suppose that
$$g=\prod\limits_{i=1}^{k}x_i^{a_{1i}}y_i^{b_{1i}}\prod\limits_{j=1}^{r}z_j^{c_{1j}},
h=\prod\limits_{i=1}^{k}x_i^{a_{2i}}y_i^{b_{2i}}\prod\limits_{j=1}^{r}z_j^{c_{2j}},$$
then
\begin{eqnarray*}
\begin{split}
g^h&=\prod\limits_{i=1}^{k}\left(x_{i}^{a_{1i}}\cdot y_{i}^{b_{1i}}\right)^{x_{i}^{a_{2i}}\cdot y_{i}^{b_{2i}}}\prod\limits_{j=1}^{r}z_j^{c_j}\\
&=\prod\limits_{i=1}^{k}\left(x_{i}^{a_{1i}}\right)^{y_{i}^{b_{2i}}}\cdot\left(y_{i}^{b_{1i}}\right)^{x_{i}^{a_{2i}}}\prod\limits_{j=1}^{r}z_j^{c_j}\\
&=\prod\limits_{i=1}^{k}\left(x_{i}^{a_{1i}}\cdot c^{a_{1i}b_{2i}}\right)\cdot\left(y_{i}^{b_{1i}}\cdot c^{-b_{1i}a_{2i}}\right)\prod\limits_{j=1}^{r}z_j^{c_j}\\
&=gc^{s}
\end{split}
\end{eqnarray*}
where $s=\sum\limits_{i=1}^k(a_{1i}b_{2i}-b_{1i}a_{2i})$.
From this,  the conjugacy class containing $g$ is $\{g, gc, \ldots, gc^{p-1}\}$.$\hfill\qedsymbol$
\end{proof}

According to  Lemma \ref{Conclass}, we know that the number of the conjugacy classes of $G$ which contain $p$ elements is $(|G|-|\zeta G|)/p$ and is denoted by $t$.
Suppose that these conjugacy classes are $C_{g_1},C_{g_2},\ldots,C_{g_t}$. Let $C:=\langle 1+\widehat{C}_{g_i}\ |\ i=1,2, \ldots t\rangle$ and $I:=\{\sum\limits_{i=1}^ta_i\widehat{C}_{g_i}\ |\ a_i\in \mathbb{F}_p,i=1,2,\ldots,t\}$.

\begin{lemma} \label{Centernu}  $\zeta(V(\mathbb{F}_pG))=V(\mathbb{F}_p\zeta G)\times C$ and $|\zeta(V(\mathbb{F}_pG))|=p^{\frac{|G|+(p-1)|\zeta G|-p}{p}}$.
\end{lemma}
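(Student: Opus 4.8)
The plan is to identify the center of the unit group with the normalized units lying in the center of the algebra, and then to decompose that center using the class-sum basis. Since every $g\in G$ satisfies $\varepsilon(g)=1$, we have $G\subseteq V(\mathbb{F}_pG)$, and $G$ spans $\mathbb{F}_pG$ as an algebra; hence an element $u\in V(\mathbb{F}_pG)$ is central in $V(\mathbb{F}_pG)$ if and only if it commutes with every $g\in G$, i.e. if and only if $u$ lies in the center $Z(\mathbb{F}_pG)$ of the algebra. Thus $\zeta(V(\mathbb{F}_pG))=V(\mathbb{F}_pG)\cap Z(\mathbb{F}_pG)$, and it suffices to describe the normalized elements of $Z(\mathbb{F}_pG)$.

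Next I would use that $Z(\mathbb{F}_pG)$ has the class sums $\widehat{C}_h$ as an $\mathbb{F}_p$-basis. By Lemma \ref{Conclass} the conjugacy classes are the singletons $\{z\}$ with $z\in\zeta G$ together with the $p$-element classes $C_{g_1},\dots,C_{g_t}$; moreover each such class has the form $C_{g_i}=\{g_i,g_ic,\dots,g_ic^{p-1}\}$, so $\widehat{C}_{g_i}=g_i\widehat{G'}$. Consequently $Z(\mathbb{F}_pG)=\mathbb{F}_p\zeta G\oplus I$ as $\mathbb{F}_p$-spaces. Applying the augmentation and using $\varepsilon(\widehat{C}_{g_i})=p=0$ shows that for $u=v+w$ with $v\in\mathbb{F}_p\zeta G$ and $w\in I$ one has $\varepsilon(u)=\varepsilon(v)$; hence $u$ is normalized iff $v\in V(\mathbb{F}_p\zeta G)$. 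Therefore $\zeta(V(\mathbb{F}_pG))=\{v+w\mid v\in V(\mathbb{F}_p\zeta G),\ w\in I\}$.

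The decisive computation is $\widehat{G'}^{\,2}=p\,\widehat{G'}=0$, which forces $\widehat{C}_{g_i}\widehat{C}_{g_j}=g_ig_j\widehat{G'}^{\,2}=0$ and hence $I^2=0$. This has two consequences. First, $(1+x)(1+y)=1+(x+y)$ for $x,y\in I$, so $C=\langle 1+\widehat{C}_{g_i}\rangle=1+I$ is a subgroup isomorphic to $(I,+)$, of order $p^t$. Second, $I$ is a module over $\mathbb{F}_p\zeta G$ (because $zg_i$ is again noncentral for central $z$), so for $u=v+w\in\zeta(V(\mathbb{F}_pG))$ I can write $u=v(1+v^{-1}w)$ with $v\in V(\mathbb{F}_p\zeta G)$ and $1+v^{-1}w\in 1+I=C$. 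This yields $\zeta(V(\mathbb{F}_pG))=V(\mathbb{F}_p\zeta G)\,C$, while comparing supports (the nontrivial part of any element of $C$ is supported off $\zeta G$) gives $V(\mathbb{F}_p\zeta G)\cap C=1$; since the center is abelian, the product is direct.

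Finally, the order follows by counting: $|V(\mathbb{F}_p\zeta G)|=p^{|\zeta G|-1}$ and $|C|=p^{t}=p^{(|G|-|\zeta G|)/p}$, whence
$$|\zeta(V(\mathbb{F}_pG))|=p^{(|\zeta G|-1)+(|G|-|\zeta G|)/p}=p^{\frac{|G|+(p-1)|\zeta G|-p}{p}}.$$
The only real subtlety is the passage from the additive decomposition of $Z(\mathbb{F}_pG)$ to the multiplicative direct product, and this is exactly what the nilpotence $I^2=0$ (equivalently $\widehat{G'}^{\,2}=0$) resolves; everything else is bookkeeping with the class-sum basis and the augmentation.
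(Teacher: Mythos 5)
Your proof is correct and follows essentially the same route as the paper: decompose $\zeta(\mathbb{F}_pG)$ via the class-sum basis as $\mathbb{F}_p\zeta G\oplus I$, use $\widehat{C}_{g_i}\widehat{C}_{g_j}=0$ (from the nilpotence of $\widehat{G'}$) to turn the additive splitting into the multiplicative factorization $b\left(1+b^{-1}\sum_i a_i\widehat{C}_{g_i}\right)\in V(\mathbb{F}_p\zeta G)\cdot C$, and then count orders. If anything, your write-up is slightly more complete than the paper's, since you explicitly justify $\zeta(V(\mathbb{F}_pG))=V(\mathbb{F}_pG)\cap\zeta(\mathbb{F}_pG)$ and the triviality of $V(\mathbb{F}_p\zeta G)\cap C$ (via supports), both of which the paper leaves implicit.
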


\begin{proof} By Lemma \ref{Conclass}, we have that $\widehat{C}_{g_i}=g_i\widehat{G'}$, $i=1,2,\ldots,t$.
Since $\widehat{G'}^p=0$,
 $\widehat{C}_{g_i}\widehat{C}_{g_j}=0$ for any $ i,j\in \{1,2,\ldots,t\}$.
From this,  $I$  is an ideal of $\zeta(\mathbb{F}_pG)$. Note that  all conjugacy classes of $G$
form a basis for $\zeta(\mathbb{F}_pG)$. Hence,  for any $a\in \zeta(V(\mathbb{F}_pG))$, there exist $b\in\mathbb{F}_p \zeta G$, $a_i\in \mathbb{F}_p$($i=1,2,\ldots,t$) such that
$a=b+\sum\limits_{i=1}^ta_i\widehat{C}_{g_i}$. Obviously, $1=\varepsilon(a)=\varepsilon(b)$, which  implies that $b\in V(\mathbb{F}_p \zeta G)$.
From this,  there exist $b_i\in \mathbb{F}_p$($i=1,2,\ldots,t$) such that
$$a=b\left(1+b^{-1}\sum\limits_{i=1}^ta_i\widehat{C}_{g_i}\right)=b\left(1+\sum\limits_{i=1}^tb_i\widehat{C}_{g_i}\right)=b\prod\limits_{i=1}^t(1+\widehat{C}_{g_i})^{b_i}.$$
Obviously $\langle 1+\widehat{C}_{g_i}\ |\ i=1,2, \ldots t\rangle$ is an elementary abelian $p$-group with order $p^t$.
The structure of $\zeta(V(\mathbb{F}_pG))$  is desired. Since $V(\mathbb{F}_p\zeta G))$ is of order $p^{|\zeta G|-1}$,
 $$|\zeta(V(\mathbb{F}_pG))|=p^{|\zeta G|-1+t}=p^{\frac{|G|+(p-1)|\zeta G|-p}{p}}.$$ $\hfill\qedsymbol$
\end{proof}

\begin{theorem} \label{ExpV}
$V(\mathbb{F}_pG)^{p^l}=V(\mathbb{F}_pG^{p^l})$, where $l\geq 2$.
 \end{theorem}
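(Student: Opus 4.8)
The plan is to reduce everything to the single pointwise formula
$$x^{p^{l}}=\sum_{g\in G}\alpha_g\,g^{p^{l}}\qquad(l\ge 2)$$
for an arbitrary normalized unit $x=\sum_{g\in G}\alpha_g g\in V(\mathbb{F}_pG)$, and then read off both inclusions from it. The starting observation is that, since $G$ has nilpotency class $2$, $|G'|=p$ and $p$ is odd, the commutator correction in $(gh)^p=g^ph^p[h,g]^{\binom{p}{2}}$ is trivial ($p\mid\binom p2$ and $[h,g]^p=1$); hence the $p$-power map on $G$ is an endomorphism, $G^{p^l}=\{g^{p^l}:g\in G\}\le \zeta G$ is an abelian subgroup, and the $p^l$-power map $G\to G^{p^l}$ is onto.

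First I would establish the formula. Applying Lemma~\ref{Brauer}(iii) repeatedly and using $\alpha_g^p=\alpha_g$ gives $x^p\equiv\sum_g\alpha_g g^p\pmod{[\mathbb{F}_pG,\mathbb{F}_pG]_p}$, so $x^p=\sum_g\alpha_g g^p+\eta$ with $\eta\in[\mathbb{F}_pG,\mathbb{F}_pG]_p$. By Lemma~\ref{Vpc}, $x^p$ is central in $\mathbb{F}_pG$, and $\sum_g\alpha_g g^p$ is central because $G^p\le\zeta G$; therefore $\eta$ is central as well. A central element of $[\mathbb{F}_pG,\mathbb{F}_pG]_p$ must, by Lemma~\ref{Brauer}(i)--(ii) together with the description of conjugacy classes in Lemma~\ref{Conclass}, vanish on every central (singleton) class and be constant on every non-central class $C_{g_i}=\{g_ic^j\}$; thus $\eta=\sum_i\gamma_i\widehat{C}_{g_i}=\sum_i\gamma_i g_i\widehat{G'}$ lies in $I$. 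Since $\widehat{G'}$ is central and $\widehat{G'}^{\,2}=p\widehat{G'}=0$, we get $\eta^2=0$, hence $\eta^p=0$ for $p\ge 2$. Now raise to the $p$-th power inside the commutative subalgebra generated by the two central elements $\sum_g\alpha_g g^p$ and $\eta$: the binomial identity (all middle coefficients $\binom pr\equiv0$) gives $x^{p^2}=\bigl(\sum_g\alpha_g g^p\bigr)^p+\eta^p=\sum_g\alpha_g g^{p^2}$, using $\eta^p=0$ and Frobenius. Iterating the Frobenius inside the commutative algebra $\mathbb{F}_p\zeta G$ yields the displayed formula for all $l\ge 2$.

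With the formula in hand both inclusions are short. For $V(\mathbb{F}_pG)^{p^l}\subseteq V(\mathbb{F}_pG^{p^l})$, note that $\sum_g\alpha_g g^{p^l}$ is supported on $G^{p^l}$ and has augmentation $\sum_g\alpha_g=1$. For the reverse inclusion, take $v=\sum_{h\in G^{p^l}}\beta_h h\in V(\mathbb{F}_pG^{p^l})$, choose for each $h$ a preimage $g_h\in G$ with $g_h^{p^l}=h$ (possible by surjectivity of the $p^l$-power map), and set $x=\sum_h\beta_h g_h$; distinct $h$ give distinct $g_h$, so $\varepsilon(x)=\sum_h\beta_h=1$ and the formula gives $x^{p^l}=\sum_h\beta_h g_h^{p^l}=v$.

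I expect the main obstacle to be the middle step: proving that the error term $\eta$ lies exactly in $I$ (so that it squares to zero), which is precisely what forces $l\ge 2$. For $l=1$ this term genuinely survives, and only the weaker statement $V(\mathbb{F}_pG)^pC=V(\mathbb{F}_pG^p)\times C$ holds; the gain at $l=2$ comes exactly from $\widehat{G'}^{\,2}=0$ collapsing $\eta$ after one further $p$-th power. It is worth noting that this argument uses only the standing hypotheses (class $2$, $|G'|=p$, $p$ odd) and the quoted lemmas, so the detailed classification of Theorem~\ref{StrG} is not needed here.
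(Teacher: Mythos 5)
Your proposal is correct and follows essentially the same route as the paper's proof: both write $x^p=\sum_g\alpha_g g^p+\eta$ via Lemma~\ref{Brauer}, use Lemma~\ref{Vpc} to force $\eta$ into the span of the non-central class sums so that $\eta^p=0$, and then apply Frobenius in the central subalgebra to get $x^{p^2}=\sum_g\alpha_g g^{p^2}$ and iterate. The only difference is that you spell out details the paper leaves implicit, notably the reverse inclusion via surjectivity of the $p^l$-power map $G\to G^{p^l}$ (valid since $p$ is odd and $G$ has class $2$), which is a welcome clarification rather than a change of method.
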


\begin{proof}
Assume that $\alpha=\sum\limits_{g\in G}\alpha_gg$
is the element of $V(\mathbb{F}_pG)$.  By Lemma \ref{Brauer},
$\alpha^p=\sum\limits_{g\in G}\alpha_gg^p+\delta$, where $\delta\in [\mathbb{F}_pG,\mathbb{F}_pG]$. Since $\alpha^p$ and $g^p$ are elements of $\zeta(V(\mathbb{F}_pG))$,
$\delta$ is an element of $\zeta(\mathbb{F}_pG)$. Also, by  Lemma  \ref{Brauer}, $\delta$ is a linear combination of the $\widehat{C_{g_i}}$. From this,  we have  $\delta^p=0$. It follows that
$$\alpha^{p^2}=\sum\limits_{g\in G}\alpha_gg^{p^2}+\delta^p=\sum\limits_{g\in G}\alpha_gg^{p^2}.$$
Hence $V(\mathbb{F}_pG)^{p^l}=V(\mathbb{F}_pG^{p^l})$, where $l\geq 2$.
$\hfill\qedsymbol$
\end{proof}

\begin{theorem}\label{Vp}
$V(\mathbb{F}_pG)^{p}C=V(\mathbb{F}_pG^{p})\times C$.
\end{theorem}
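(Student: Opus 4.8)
The plan is to realise both $V(\mathbb{F}_pG)^pC$ and $V(\mathbb{F}_pG^p)\times C$ as subgroups of the abelian group $\zeta(V(\mathbb{F}_pG))$ containing the common subgroup $C$, and then to force equality by projecting out $C$. First I would record the structural preliminaries. Since $p$ is odd and $G'=\langle c\rangle$ is central of order $p$, the group $G$ has nilpotency class $2$, so the $p$-power map $g\mapsto g^p$ is an endomorphism of $G$; its image is $G^p$, and because $G/\zeta G$ is elementary abelian we get $G^p\le\zeta G$. Consequently $\mathbb{F}_pG^p\subseteq\mathbb{F}_p\zeta G\subseteq\zeta(\mathbb{F}_pG)$, so $V(\mathbb{F}_pG^p)\le V(\mathbb{F}_p\zeta G)\le\zeta(V(\mathbb{F}_pG))$. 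By Lemma \ref{Vpc} we also have $V(\mathbb{F}_pG)^p\le\zeta(V(\mathbb{F}_pG))$, and by Lemma \ref{Centernu}, $\zeta(V(\mathbb{F}_pG))=V(\mathbb{F}_p\zeta G)\times C$. In particular $V(\mathbb{F}_pG^p)\cap C\le V(\mathbb{F}_p\zeta G)\cap C=1$, so the right-hand side is an internal direct product, and both sides of the asserted identity are subgroups of $\zeta(V(\mathbb{F}_pG))$ that contain $C$.

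Next I would introduce the projection $\pi\colon\zeta(V(\mathbb{F}_pG))=V(\mathbb{F}_p\zeta G)\times C\to V(\mathbb{F}_p\zeta G)$ onto the first factor, whose kernel is exactly $C$. Since both $V(\mathbb{F}_pG)^pC$ and $V(\mathbb{F}_pG^p)\times C$ contain $\ker\pi=C$, the desired equality is equivalent to the equality of their images under $\pi$. As $\pi(C)=1$, this reduces the entire problem to the single identity $\pi\bigl(V(\mathbb{F}_pG)^p\bigr)=V(\mathbb{F}_pG^p)$.

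To compute $\pi$ on $p$-th powers I would reuse the calculation in the proof of Theorem \ref{ExpV}: for $\alpha=\sum_{g\in G}\alpha_gg\in V(\mathbb{F}_pG)$, Lemma \ref{Brauer} gives $\alpha^p=\beta_\alpha+\delta_\alpha$ with $\beta_\alpha:=\sum_{g\in G}\alpha_gg^p$ and $\delta_\alpha\in I$. Recalling from the proof of Lemma \ref{Centernu} that all products $\widehat{C}_{g_i}\widehat{C}_{g_j}$ vanish, one has $C=1+I$ and $I^2=0$. Since $\varepsilon(\beta_\alpha)=\varepsilon(\alpha)=1$ and $\beta_\alpha$ is supported on $G^p$, it is a unit of the commutative algebra $\mathbb{F}_p\zeta G\subseteq\zeta(\mathbb{F}_pG)$, whence $\beta_\alpha^{-1}\in\zeta(\mathbb{F}_pG)$; as $I$ is an ideal of $\zeta(\mathbb{F}_pG)$ we obtain the factorization $\alpha^p=\beta_\alpha\bigl(1+\beta_\alpha^{-1}\delta_\alpha\bigr)$ with $\beta_\alpha\in V(\mathbb{F}_p\zeta G)$ and $1+\beta_\alpha^{-1}\delta_\alpha\in 1+I=C$. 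Reading off the $V(\mathbb{F}_p\zeta G)$-component gives $\pi(\alpha^p)=\beta_\alpha$, so $\pi\bigl(V(\mathbb{F}_pG)^p\bigr)=\{\beta_\alpha\mid\alpha\in V(\mathbb{F}_pG)\}$.

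It then remains to identify this set with $V(\mathbb{F}_pG^p)$. The inclusion into $V(\mathbb{F}_pG^p)$ is immediate since each $\beta_\alpha$ is supported on $G^p$ with augmentation $1$. For the reverse inclusion, given $\beta=\sum_{h\in G^p}\beta_hh\in V(\mathbb{F}_pG^p)$ I would use the surjectivity of the power endomorphism $G\to G^p$ to choose, for each $h\in G^p$, a preimage $t_h$ with $t_h^p=h$, and set $\alpha:=\sum_{h\in G^p}\beta_ht_h$; then $\varepsilon(\alpha)=\sum_h\beta_h=1$, so $\alpha\in V(\mathbb{F}_pG)$, and $\beta_\alpha=\sum_h\beta_ht_h^p=\beta$. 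This yields $\pi\bigl(V(\mathbb{F}_pG)^p\bigr)=V(\mathbb{F}_pG^p)$ and hence the theorem. The only genuinely delicate point is the factorization $\alpha^p=\beta_\alpha\bigl(1+\beta_\alpha^{-1}\delta_\alpha\bigr)$: it is what converts the additive decomposition $\alpha^p=\beta_\alpha+\delta_\alpha$ coming from the Brauer-type congruence into a multiplicative splitting along $V(\mathbb{F}_p\zeta G)\times C$, and it relies crucially on $\delta_\alpha$ lying in the ideal $I$ with $I^2=0$. Everything else is bookkeeping with the projection $\pi$ and the surjectivity of the $p$-power map.
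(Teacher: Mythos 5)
Your proposal is correct and follows essentially the same route as the paper: both arguments hinge on the Brauer decomposition $\alpha^p=\sum_{g\in G}\alpha_g g^p+\delta$, on centrality forcing the error term $\delta$ into the ideal $I$ spanned by the noncentral class sums, and on the resulting multiplicative factorization $\alpha^p=\beta_\alpha\bigl(1+\beta_\alpha^{-1}\delta\bigr)$ whose second factor lies in $C=1+I$. The only differences are organizational, and in your favor: packaging the argument through the projection $\pi$ with kernel $C$, together with the surjectivity of the $p$-power map $g\mapsto g^p$ (valid here since $G$ has class $2$, $|G'|=p$ and $p$ is odd), makes explicit the reverse inclusion $V(\mathbb{F}_pG^{p})\times C\leq V(\mathbb{F}_pG)^{p}C$ that the paper compresses into ``and vice versa.''
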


\begin{proof}
For any $\alpha\in V(\mathbb{F}_pG)$, suppose that $\alpha=\sum\limits_{g\in G}\alpha_gg$.
According to Lemma \ref{Brauer}, there exists an element $\gamma\in [\mathbb{F}_pG,\mathbb{F}_pG]$
such that $\alpha^p=\sum\limits_{g\in G}\alpha_gg^p+\gamma$. Note that $\sum\limits_{g\in G}\alpha_gg^p$
is an element of $V(\mathbb{F}_pG^{p})$ and $V(\mathbb{F}_pG^{p})\leq \zeta(\mathbb{F}_pG)$.
Since $I$  is an ideal of $\zeta(\mathbb{F}_pG)$, there exists  $\gamma_1\in C$ such that
$$\alpha^p=(\sum\limits_{g\in G}\alpha_gg^p)(1+(\sum\limits_{g\in G}\alpha_gg^p)^{-1}\gamma)=(\sum\limits_{g\in G}\alpha_gg^p)\gamma_1\in V(\mathbb{F}_pG^{p})\times C. $$
Hence $V(\mathbb{F}_pG)^{p}C\leq V(\mathbb{F}_pG^{p})\times C$ and vice versa.$\hfill\qedsymbol$
\end{proof}

\begin{remark}
According to Theorem \ref{StrG},  the isomorphism types (1.5) and (1.10) of  $G$ have exponent $p$  when  $m=n=1$.
At this time, $V(\mathbb{F}_pG)^{p}>V(\mathbb{F}_pG^p)=1$ since $(x_1+y_1-1)^p=(x_1+y_1)^p-1\neq 1$ by Lemma \ref{abp}.
Further $V(\mathbb{F}_pG)^{p}=C$ by Lemma 5 in \cite{BB1}.
\end{remark}

\begin{corollary}
$G\bigcap V(\mathbb{F}_pG)^{p}=G^p$.
\end{corollary}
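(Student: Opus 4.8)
The plan is to prove the two inclusions separately, the routine one being $G^p\subseteq G\cap V(\mathbb{F}_pG)^p$ and the substantive one being $G\cap V(\mathbb{F}_pG)^p\subseteq G^p$. The first is immediate: every element of $G^p$ is the $p$-th power of a group element, hence lies in $V(\mathbb{F}_pG)^p$, and trivially $G^p\subseteq G$.

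For the reverse inclusion I would feed an arbitrary $g\in G\cap V(\mathbb{F}_pG)^p$ into Theorem \ref{Vp}. Since $V(\mathbb{F}_pG)^p\subseteq V(\mathbb{F}_pG)^pC=V(\mathbb{F}_pG^p)\times C$, we may write $g=uw$ with $u\in V(\mathbb{F}_pG^p)$ and $w\in C$. As the class sums satisfy $\widehat{C}_{g_i}\widehat{C}_{g_j}=0$ (Lemma \ref{Centernu}), every element of $C$ has the form $w=1+\sum_i b_i\widehat{C}_{g_i}$ with $b_i\in\mathbb{F}_p$, so that $g=u+u\bigl(\sum_i b_i\widehat{C}_{g_i}\bigr)$.

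The heart of the argument is a support comparison. Because $G^p\leq\mathrm{Frat}(G)\leq\zeta G$, the unit $u$ is supported inside $\zeta G$, whereas each $\widehat{C}_{g_i}$ is supported on a noncentral conjugacy class; since a central element times a noncentral element is again noncentral, the term $u\bigl(\sum_i b_i\widehat{C}_{g_i}\bigr)$ is supported entirely off $\zeta G$. Comparing this decomposition with the single group element $g$: if $g$ were noncentral its central component would vanish, forcing $u=0$, which contradicts $\varepsilon(u)=1$; hence $g\in\zeta G$, its noncentral component vanishes, and $u\bigl(\sum_i b_i\widehat{C}_{g_i}\bigr)=0$. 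As $u$ is a unit this yields $\sum_i b_i\widehat{C}_{g_i}=0$, and the linear independence of the distinct class sums (being sums over disjoint conjugacy classes, cf.\ Lemma \ref{Conclass}) forces every $b_i=0$, i.e.\ $w=1$. Thus $g=u\in V(\mathbb{F}_pG^p)$, which equates the basis element $g$ with an element of $\mathbb{F}_pG^p$; its support $\{g\}$ therefore lies in $G^p$, so $g\in G^p$.

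I expect the main obstacle to be the support bookkeeping rather than any deep structural input: one must check cleanly that $\mathrm{supp}\bigl(u\sum_i b_i\widehat{C}_{g_i}\bigr)\cap\zeta G=\emptyset$, which reduces to the facts $G^p\leq\zeta G$ and $\zeta G\cdot(G\setminus\zeta G)\subseteq G\setminus\zeta G$, together with the orthogonality $\widehat{C}_{g_i}\widehat{C}_{g_j}=0$. Lemma \ref{Vpc} may be invoked at the outset to note that $g$ is automatically central in $V(\mathbb{F}_pG)$; this is a useful consistency check, but it is in fact subsumed by the support argument above, so it is not strictly needed.
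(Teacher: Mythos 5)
Your proof is correct, and its skeleton coincides with the paper's: both arguments funnel everything through Theorem \ref{Vp}, writing a group element of $V(\mathbb{F}_pG)^{p}\subseteq V(\mathbb{F}_pG)^{p}C=V(\mathbb{F}_pG^{p})\times C$ as a product $uw$ with $u\in V(\mathbb{F}_pG^{p})$, $w\in C$, and then showing $w=1$. Where you part ways is in how that last step is executed. The paper stays at the level of the already-established structure theory: by Lemma \ref{Vpc} (or simply because $V(\mathbb{F}_pG^{p})\times C$ lies in $\zeta(V(\mathbb{F}_pG))$) the element $x$ is central, hence $x\in\zeta G$, and then $w=u^{-1}x\in V(\mathbb{F}_p\zeta G)\cap C=1$, the trivial intersection being exactly what the direct product in Lemma \ref{Centernu} asserts; finally it sandwiches $G^p\leq G\cap V(\mathbb{F}_pG)^{p}\leq G\cap V(\mathbb{F}_pG)^{p}C=G^p$. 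You instead unwind $C$ explicitly as $1+\sum_i b_i\widehat{C}_{g_i}$ (legitimate, by the orthogonality $\widehat{C}_{g_i}\widehat{C}_{g_j}=0$) and run a support comparison: $u$ is supported in $\zeta G$ because $G^p\leq\mathrm{Frat}(G)\leq\zeta G$, the term $u\sum_i b_i\widehat{C}_{g_i}$ is supported off $\zeta G$, and matching against the single basis element $g$ forces $g\in\zeta G$, then $w=1$ by linear independence of the class sums, then $\mathrm{supp}(g)\subseteq G^p$. In effect you re-prove, inline and from scratch, the disjointness-of-supports fact that underlies the directness of the product in Lemma \ref{Centernu}, and you correctly observe that Lemma \ref{Vpc} becomes dispensable. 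What your route buys is self-containment and transparency (the mechanism killing $w$ is visible rather than delegated to a lemma); what the paper's route buys is brevity, since the needed structural facts were already paid for. One cosmetic caveat: your justification of the easy inclusion ("every element of $G^p$ is the $p$-th power of a group element") is not needed and not true for arbitrary groups, though it happens to hold here since $p$ is odd and $G$ has class $2$; the clean argument is simply that $V(\mathbb{F}_pG)^{p}$, being the subgroup generated by $p$-th powers, contains the subgroup $G^p$ generated by the elements $g^p$.
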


\begin{proof}
For any element $x\in G\bigcap (V(\mathbb{F}_pG^{p})\times C)$, we suppose $x=yz$, where $y\in V(\mathbb{F}_pG^{p})$
and $z\in C$. By Lemma \ref{Vpc}, $x$ is included in the center of $V(\mathbb{F}_pG)$, that is, $x\in \zeta G$.
From this, $y^{-1}x$ is included in $V(\mathbb{F}_p\zeta G)$, that is, $z$ is included in the intersection of $V(\mathbb{F}_p\zeta G)$
and $C$. By Lemma \ref{Centernu},  $z$ is equal to 1.  It follows that
$$G\bigcap (V(\mathbb{F}_pG^{p})\times C)=G\bigcap V(\mathbb{F}_pG^{p})= G^p.$$
Since $$G^p\leq G\bigcap V(\mathbb{F}_pG)^{p}\leq G\bigcap V(\mathbb{F}_pG)^{p}C, $$
 the result is true by Theorem \ref{Vp}.
$\hfill\qedsymbol$
\end{proof}

\begin{theorem}\label{O1V}
$\Omega_1(V(\mathbb{F}_pG))\leq 1+\triangle(G,\Omega_1(G))$ and
$\Omega_l(V(\mathbb{F}_pG))= 1+\triangle(G,\Omega_l(G))$, where $l\geq 2.$
\end{theorem}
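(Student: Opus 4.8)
The plan is to reduce both statements to a single computation of the $p^l$-th power of a general normalized unit, combined with the fact that the $p^l$-power map on $G$ is a group homomorphism. First I would record that, since $G'=\langle c\rangle$ is central of order $p$ and $p$ is odd, $G$ has nilpotency class $2$, so the class-two power formula gives $(gh)^{p^l}=g^{p^l}h^{p^l}[h,g]^{\binom{p^l}{2}}$ for all $g,h\in G$; because $[h,g]\in G'$ has order dividing $p$ and $\binom{p^l}{2}\equiv 0\pmod p$ for $l\geq 1$ with $p$ odd, the map $\psi_l\colon g\mapsto g^{p^l}$ is an endomorphism of $G$. Its image is $G^{p^l}$ and its kernel is exactly $\{g\in G\mid g^{p^l}=1\}=\Omega_l(G)$, so the fibres of $\psi_l$ are precisely the cosets of $\Omega_l(G)$, and these are in bijection with the elements of $G^{p^l}\subseteq\zeta G$.

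Next I would translate membership in $1+\triangle(G,\Omega_l(G))$ into a coset condition on coefficients. Since $\Omega_l(G)\unlhd G$, the ideal $\triangle(G,\Omega_l(G))$ is the kernel of the natural projection $\mathbb{F}_pG\to\mathbb{F}_p(G/\Omega_l(G))$ (cf. Lemma \ref{subbas}), so a unit $u=\sum_{g}\alpha_g g$ lies in $1+\triangle(G,\Omega_l(G))$ if and only if $\sum_{g\in q\Omega_l(G)}\alpha_g$ equals $1$ when $q\in\Omega_l(G)$ and equals $0$ otherwise.

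Now I treat $l\geq 2$. Following Theorem \ref{ExpV}, repeated use of Lemma \ref{Brauer} together with $G^p\le\zeta G$ and $V(\mathbb{F}_pG)^p\le\zeta(V(\mathbb{F}_pG))$ (Lemma \ref{Vpc}) shows that the error term in $u^p=\sum_g\alpha_g g^p+\delta$ satisfies $\delta\in[\mathbb{F}_pG,\mathbb{F}_pG]$, is central, is a combination of the noncentral class sums $\widehat{C_{g_i}}=g_i\widehat{G'}$, and hence squares to zero; applying the Frobenius map in the commutative ring $\zeta(\mathbb{F}_pG)$ then yields $u^{p^l}=\sum_g\alpha_g g^{p^l}$ for every $l\geq 2$. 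Grouping by fibres of $\psi_l$ as $u^{p^l}=\sum_{h\in G^{p^l}}\bigl(\sum_{g\in\psi_l^{-1}(h)}\alpha_g\bigr)h$, I see that $u^{p^l}=1$ holds exactly when the coset sums meet the condition of the previous paragraph; thus $\{u\in V(\mathbb{F}_pG)\mid u^{p^l}=1\}=1+\triangle(G,\Omega_l(G))$. Since the right-hand side is a subgroup, the $p^l$-torsion units already form a subgroup, giving $\Omega_l(V(\mathbb{F}_pG))=1+\triangle(G,\Omega_l(G))$.

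Finally, for $l=1$ only the inclusion survives, and pinpointing why is the \emph{main obstacle}. Here $u^p=\sum_g\alpha_g g^p+\delta$ with $\delta$ central; by Lemma \ref{Brauer}(i), $\delta$ has zero coefficient on every central element, whereas $\sum_g\alpha_g g^p$ is supported on $G^p\subseteq\zeta G$. If $u^p=1$, comparing the noncentral parts forces $\delta=0$ and hence $\sum_g\alpha_g g^p=1$, which is exactly the coset condition, so every $p$-torsion unit lies in $1+\triangle(G,\Omega_1(G))$; as this is a subgroup, $\Omega_1(V(\mathbb{F}_pG))\le 1+\triangle(G,\Omega_1(G))$. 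The reason equality now fails is that the reverse inclusion breaks down: a unit satisfying the coset condition only yields $u^p=1+\delta$, and $\delta$ need not vanish (the Remark exhibits exponent-$p$ situations where such a $\delta$ genuinely appears), so equality is available only for $l\geq 2$, where the power map carries no error term.
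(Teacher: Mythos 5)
Your proof is correct, and its core mechanism is the same as the paper's: establish the termwise power formula $\alpha^{p^l}=\sum_g\alpha_g g^{p^l}$ for $l\ge 2$ (and $\alpha^p=\sum_g\alpha_g g^p+\delta$ for $l=1$, forcing $\delta=0$ when $\alpha^p=1$), then read $\sum_g\alpha_g g^{p^l}=1$ as the coset-sum condition characterizing membership in $1+\triangle(G,\Omega_l(G))$. You differ in two points of execution, both to your credit. First, you isolate the fact that $g\mapsto g^{p^l}$ is an endomorphism of $G$ with kernel exactly $\Omega_l(G)$ (class $2$, $p$ odd, $p\mid p^l(p^l-1)/2$); the paper uses this silently, both when it writes $(q_ih_j)^{p^l}=q_i^{p^l}h_j^{p^l}$ over a transversal and when it concludes $\sum_j r_{ij}=0$ for $i\neq 1$ from $\sum_i(\sum_j r_{ij})(q_i^{p^l}-1)=0$ --- a step that needs the elements $q_i^{p^l}$, $i\neq 1$, to be pairwise distinct and different from $1$, which is precisely injectivity of the induced map on cosets of $\Omega_l(G)$. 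Your fibre formulation makes these hidden steps rigorous. Second, for $l=1$ you kill the error term by a support comparison: Lemma \ref{Brauer}(i) forces $\delta\in[\mathbb{F}_pG,\mathbb{F}_pG]$ to have zero coefficient at every central group element, while $\sum_g\alpha_g g^p$ is supported in $\zeta G$ because $G^p\le\zeta G$; the paper instead routes through Lemma \ref{Vpc} and the decomposition $\zeta(V(\mathbb{F}_pG))=V(\mathbb{F}_p\zeta G)\times C$ of Lemma \ref{Centernu}, writing $\sum_g\alpha_g g^p=1-\beta\in V(\mathbb{F}_p\zeta G)\bigcap C=1$. Your version is more elementary, as it bypasses the structure of $\zeta(V(\mathbb{F}_pG))$ entirely. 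Finally, note that the paper goes further for $l=1$: it shows the inclusion is strict by exhibiting, for each isomorphism type of Theorem \ref{StrG}, an element of $1+\triangle(G,\Omega_1(G))$ whose $p$-th power is not $1$ (via Lemma \ref{abp}); you only gesture at this through the Remark, but since the statement claims only the inclusion when $l=1$, this is not a gap.
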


\begin{proof}
Suppose that $\alpha=\sum\limits_{g\in G}\alpha_gg\in \Omega_l(V(\mathbb{F}_pG))$.
If $l=1$,  according to Lemma  \ref{Brauer}, there exists $\beta\in [\mathbb{F}_pG,\mathbb{F}_pG]$ such that
$\alpha^p=\sum\limits_{g\in G}\alpha_gg^p+\beta.$
By Lemma  \ref{Vpc}, we have
$$\sum\limits_{g\in G}\alpha_gg^p=1-\beta\in V(\mathbb{F}_p\zeta G)\bigcap C=1,$$
that is, $\sum\limits_{g\in G}\alpha_gg^p=1$ and $\beta=0$. If $l\geq 2$, then we obviously $1=\alpha^{p^i}=\sum\limits_{g\in G}\alpha_gg^{p^i}.$

Let $T$ be a transversal of $\Omega_l(G)$ in $G$. Then every element $\alpha\in \mathbb{F}_pG$
can be written as a finite sum $\alpha=\sum\limits_{i,j}r_{ij}q_ih_j$, where $q_1=1$ and $q_i\in T$, $h_j\in \Omega_l(G)$, $r_{ij}\in \mathbb{F}_p$. For $\alpha\in \Omega_l(V(\mathbb{F}_pG))$, according to the above result, we have
$$1=\alpha^{p^l}=\sum\limits_{i,j}r_{ij}q_i^{p^l}h_j^{p^l}=\sum\limits_{i}(\sum\limits_{j}r_{ij})q_i^{p^l}.$$
Hence $$0=\sum\limits_{i}(\sum\limits_{j}r_{ij})q_i^{p^l}-1
=\sum\limits_{i}(\sum\limits_{j}r_{ij})q_i^{p^l}-\sum\limits_{i}(\sum\limits_{j}r_{ij})
=\sum\limits_{i}(\sum\limits_{j}r_{ij})(q_i^{p^l}-1).$$
From this,
$\sum\limits_{j}r_{1j}=1$, $\sum\limits_{j}r_{ij}=0$ if $i\neq 1$.
It follows that
\begin{eqnarray*}
\begin{split}
\alpha &=\sum\limits_{j}r_{1j}h_j+\sum\limits_{i\neq 1}(\sum\limits_{j}r_{ij}h_j)q_i \\
 &=1+\sum\limits_{j}r_{1j}(h_j-1)+\sum\limits_{i\neq 1}(\sum\limits_{j}r_{ij}h_j)q_i-\sum\limits_{i\neq 1}(\sum\limits_{j}r_{ij})q_i \\
 &=1+\sum\limits_{i,j}r_{ij}q_i(h_j-1)
\end{split}
\end{eqnarray*}
By Lemma \ref{subbas}, we have $\alpha\in 1+ \triangle(G,\Omega_l(G)).$
It follows that $\Omega_l(V(\mathbb{F}_pG))\leq 1+\triangle(G,\Omega_l(G))$.

Conversely, for any element $\alpha\in 1+\triangle(G,\Omega_l(G))$, suppose that
$\alpha=1+\sum\limits_{i,j}r_{ij}q_i(h_j-1)$, where $q_i\in T$, $h_j\in \Omega_l(G)$ and $r_{ij}\in \mathbb{F}_p$.
We first suppose that $l=1$. For all isomorphism types of $G$ except for the types  $(1.1)$, $(1.5)$,  $(1.6)$, $(1.10)$, $(1.12)$,
$(1.14)$, $(1.16)$,  $(1.18)$ and $(1.19)$,
there exist two elements $x$ and $y$ such that $x\in T$, $y\in \Omega_1(G)$  and $[x,y]\neq 1$. At this time,
$1+x(y-1)\in 1+\triangle(G,\Omega_1(G))$ and $(1+x(y-1))^p\neq 1$ by Lemma \ref{abp}.
For the case $(1.1)$,  there exist $a, b$ satisfying $|a|=p^{n+1}, |b|=p^{m+1}$  and $[a,b]\neq 1$,
then by Lemma \ref{abp}
$$1+a(a^{p^n}b^{p^m}-1)+b(b^{p^m}-1)\in 1+\triangle(G,\Omega_1(G))$$ and
$$(1+a(a^{p^n}b^{p^m}-1)+b(b^{p^m}-1))^p\neq 1.$$
For the case $(1.6)$, we may obtain the similar result.
For the other types, there exist $a_1, b_1\in \Omega_1(G)$ such that $[a_1, b_1]\neq 1$. At this time,
$1+(a_1-1)+(b_1-1)\in 1+\triangle(G,\Omega_1(G))$ and $(a_1+b_1-1)^p\neq 1$.
In a word, $\Omega_1(V(\mathbb{F}_pG))$ is a proper subgroup of $1+\triangle(G,\Omega_1(G))$.

Suppose that $l>1$, then,  by Theorem \ref{ExpV}, we have
$$\alpha^{p^l}=1+\sum\limits_{i,j}r_{ij}q_i^{p^l}(h_j^{p^l}-1)=1.$$
Hence $1+\triangle(G,\Omega_l(G))\leq \Omega_l(V(\mathbb{F}_pG))$, which is desired.
$\hfill\qedsymbol$
\end{proof}

\begin{theorem}\label{CG1}
$\zeta \Omega_l(V(\mathbb{F}_pG))=\Omega_l(\zeta V(\mathbb{F}_pG))=\Omega_l(V(\mathbb{F}_p\zeta G))\times C$, where $l\geq 1$.
\end{theorem}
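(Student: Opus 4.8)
The plan is to prove the two equalities separately: the middle one, $\Omega_l(\zeta V(\mathbb{F}_pG))=\Omega_l(V(\mathbb{F}_p\zeta G))\times C$, is essentially formal, while the outer one, $\zeta\Omega_l(V(\mathbb{F}_pG))=\Omega_l(\zeta V(\mathbb{F}_pG))$, carries the real content. Throughout I write $V=V(\mathbb{F}_pG)$ and $Z=\zeta V$. By Lemma \ref{Centernu} we have $Z=V(\mathbb{F}_p\zeta G)\times C$, and the same lemma shows $C$ is elementary abelian, so $\Omega_l(C)=C$ for every $l\geq 1$. Since $Z$ is abelian and $\Omega_l$ is multiplicative on direct products of abelian groups, $\Omega_l(Z)=\Omega_l(V(\mathbb{F}_p\zeta G))\times\Omega_l(C)=\Omega_l(V(\mathbb{F}_p\zeta G))\times C$. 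This settles the middle equality for all $l\geq 1$. For the outer equality the inclusion $\Omega_l(Z)\leq\zeta\Omega_l(V)$ is immediate: an element of $\Omega_l(Z)$ is central in $V$ and has order dividing $p^l$, hence lies in $\Omega_l(V)$ and centralizes it.

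The substance is the reverse inclusion $\zeta\Omega_l(V)\leq\Omega_l(Z)$. Fix $a\in\zeta\Omega_l(V)$. Since $a\in\Omega_l(V)$ we have $a^{p^l}=1$, so it remains to prove $a\in Z$; because the $\mathbb{F}_p$-span of $V$ is all of $\mathbb{F}_pG$, this is equivalent to $a\in\zeta(\mathbb{F}_pG)$, i.e.\ $[a,g]=ag-ga=0$ for every $g\in G$. For $l\geq 2$, Theorem \ref{O1V} gives $\Omega_l(V)=1+\triangle(G,\Omega_l(G))$, so $a$ commutes with $1+x$ for every $x$ in the two-sided ideal $\triangle(G,\Omega_l(G))$; equivalently $a$ ring-centralizes this ideal. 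Feeding in the elements $(h-1)g$ and $g(h-1)$ with $h\in\Omega_l(G)$, $g\in G$, and using $ah=ha$ together with the fact that $[w,g]\in\langle c\rangle$ is central (Lemma \ref{Conclass}), a short computation yields $(h-1)[a,g]=0$ and $[a,g](h-1)=0$ for all $h\in\Omega_l(G)$. Thus each $[a,g]$ is left- and right-invariant under $\Omega_l(G)$, i.e.\ it lies in $\widehat{\Omega_l(G)}\,\mathbb{F}_pG\cap\mathbb{F}_pG\,\widehat{\Omega_l(G)}$.

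On the other hand, Lemma \ref{Conclass} forces $[a,g]$ to be supported on cosets of $\langle c\rangle$, with the explicit shape $\sum_w a_w\,gw\,(c^{f(\bar w,\bar g)}-1)$. The plan is to combine these two descriptions, exploiting $c\in\Omega_l(G)$ and the central-product structure of $G$ from Lemma \ref{STG} (so that $\Omega_l(G)$ is understood factor by factor), to conclude $[a,g]=0$ for all $g$. This gives $a\in\zeta(\mathbb{F}_pG)\cap V=Z$ and hence $a\in\Omega_l(Z)$, completing the outer equality.

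The hard part is exactly this last step --- upgrading ``$[a,g]$ is $\Omega_l(G)$-biinvariant'' to ``$[a,g]=0$'' --- since biinvariance alone only pins down the coefficients of $[a,g]$ up to the structure of the group algebra $\mathbb{F}_p\langle c\rangle$, and one must use how large $\Omega_l(G)$ is inside each inner-abelian factor to kill the residual freedom. A second difficulty is the case $l=1$, where Theorem \ref{O1V} supplies only $\Omega_1(V)\leq 1+\triangle(G,\Omega_1(G))$; here the auxiliary units $1+q(h-1)$ need not be $p$-torsion, so I would instead lean on Lemma \ref{Vpc} (that is, $V^p\leq\zeta V$) to recover centrality, checking separately the low-exponent types such as $(1.5)$ and $(1.10)$ with $m=n=1$ flagged in the Remark.
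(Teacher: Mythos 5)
Your middle equality and the easy inclusion $\Omega_l(\zeta V(\mathbb{F}_pG))\leq\zeta\Omega_l(V(\mathbb{F}_pG))$ are correct and agree with the paper. The problem is the reverse inclusion, where your proposal stops exactly where the actual proof has to happen. Writing $V=V(\mathbb{F}_pG)$ as you do: for $l\geq 2$ you correctly derive that $a$ centralizes the ideal $\triangle(G,\Omega_l(G))$, hence $(h-1)[a,g]=0=[a,g](h-1)$ for all $h\in\Omega_l(G)$, $g\in G$; but this biinvariance genuinely cannot yield $[a,g]=0$ by itself, and you concede you have not closed the step. To see that the residual freedom is real: on a coset $gw\langle c\rangle$ the commutator contributes $gw\,u_w(c^{f}-1)$ with $u_w\in\mathbb{F}_p\langle c\rangle$, and since $\widehat{\langle c\rangle}=(c-1)^{p-1}$, the equation $u_w(c^{f}-1)=\lambda\widehat{\langle c\rangle}$ has nonzero solutions $u_w$ for every $\lambda\in\mathbb{F}_p$; such contributions are annihilated by $c-1$ on both sides, so they survive all the constraints you have written down. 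Eliminating them requires playing the invariance against the size of $\Omega_l(G)$ in each factor and against the support of $a$ --- precisely the ``hard part'' you name and defer. The paper takes a different and more concrete route: it writes $\alpha=1+\sum_{i,j}r_{ij}q_i(h_j-1)$ over a transversal $T$ of $\Omega_l(G)$ in $G$, uses Theorem \ref{StrG} to choose $T$ inside the factors of exponent greater than $p^l$, and then forces coefficients to be constant on conjugacy classes in three successive commutations: with $\beta\in\Omega_l(G)\setminus\zeta\Omega_l(G)$, with $1+\rho(c-1)$ where $[q_i,\rho]=c$ and $[\rho,\Omega_l(G)]=1$, and with $1+\theta(c-1)$ where $[h_j,\theta]=c$; together with $\zeta\Omega_l(G)\cap\zeta G=\Omega_l(\zeta G)$ this lands $\alpha$ in $(1+\triangle(\zeta G,\Omega_l(\zeta G)))\times C=\Omega_l(V(\mathbb{F}_p\zeta G))\times C$. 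None of this structural input appears in your sketch.

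Your handling of $l=1$ is a second gap, and the proposed repair points the wrong way. The paper needs no separate case for $l=1$: the inclusion $\Omega_1(V)\leq 1+\triangle(G,\Omega_1(G))$ from Theorem \ref{O1V} already suffices to put $\alpha$ in transversal form, and every auxiliary unit it commutes against lies in $\Omega_1(V)\leq\Omega_l(V)$ --- the elements $\beta$ are group elements of order dividing $p^l$, and $(1+\rho(c-1))^p=1+\rho^p(c-1)^p=1$ because $c-1$ is central with $(c-1)^p=0$. By contrast, Lemma \ref{Vpc} is a statement about $V^p$, while an element of $\zeta\Omega_1(V)$ has order dividing $p$ and need not lie in $V^p$ at all; how that lemma would ``recover centrality'' is not explained, and I do not see an argument there.
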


\begin{proof}
If $l\geq \mathrm{Exp}(V(\mathbb{F}_pG)$, then the result  is trivial by Lemma 4.2. We next suppose $l<\mathrm{Exp}(V(\mathbb{F}_pG)$.
According to Lemma \ref{Centernu}, we have $\Omega_l(\zeta V(\mathbb{F}_pG))=\Omega_l(V(\mathbb{F}_p\zeta G))\times C$.
Hence we only prove the first equation. Obviously, $\Omega_l(\zeta V(\mathbb{F}_pG))\leq \zeta\Omega_l(V(\mathbb{F}_pG))$.

Suppose that $\alpha\in\zeta \Omega_l(V(\mathbb{F}_pG))$.
Let $T$ be a transversal of $\Omega_l(G)$ in $G$. According to Theorem \ref{O1V},
$\alpha$ may be written as a finite sum $1+\sum\limits_{i,j}r_{ij}q_i(h_j-1)$, where $q_i\in T$, $h_j\in \Omega_l(G)$, $r_{ij}\in \mathbb{F}_p$.
For any $\beta\in \Omega_l(G)-\zeta\Omega_l(G)$,  we have $\alpha=\alpha^{\beta}$ since $\beta\in \Omega_l(V(\mathbb{F}_pG))$.
According to Theorem \ref{StrG}, all $q_i's$ may be taken from the product of the subgroups with exponent more than $p^l$ which are factors in the central product or
direct product. Hence each of $q_i's$ and $\beta$ are commutative, that is,
 $\sum\limits_{i,j}r_{ij}q_i(h_j-1)=\sum\limits_{i,j}r_{ij}q_i(h_j^{\beta}-1)$.
From this, every element  has the same coefficient which is conjugated with $h_j$ in $\Omega_l(G)$.
Without loss of  generality, we still use $j$ to distinguish from the different conjugacy class of $\Omega_l(G)$,
denote  by $\widehat{h_j}$ and $n_j$ the sum and number of the elements in the conjugacy class including in $h_j$, respectively.
Moreover, according to Lemma \ref{Conclass},  $n_j=p$ if $h_j\notin \zeta\Omega_l(G)$; $n_j=1$ if $h_j\in \zeta\Omega_l(G)$.
Hence $\alpha$ is written as
\begin{center}
$\alpha=1+\sum\limits_{\substack{  i,j\\
                                        h_j\in  \zeta\Omega_l(G)}}r_{ij}q_i(h_j-1)+\sum\limits_{\substack{  i,j\\
                                        h_j\notin  \zeta\Omega_l(G)}}r_{ij}q_i\widehat{h_j}. $
\end{center}

If $q_i$ is not included in $\zeta G$, then there exists $\rho\in G$ such that $[q_i,\rho]=c$ and $[\rho,\Omega_l(G)]=1$.
Since $1+\rho(c-1)$ is included in $\Omega_1(V(\mathbb{F}_pG))$, we have
$1+\rho(c-1)$ and $\alpha$  are commutative. Hence
$$\sum\limits_{\substack{ i,j\\
                                        h_j\in  \zeta\Omega_l(G)}}r_{ij}q_i(h_j-1)(c-1)=\sum\limits_{\substack{ i,j\\
                                        h_j\in  \zeta\Omega_l(G)}}r_{ij}q_i^{\rho}(h_j-1)(c-1).$$
According to Lemma \ref{subbas},  every element  has the same coefficient which is conjugated with $q_i$ for $h_j\in  \zeta\Omega_l(G)$.
From this, there exists $\gamma\in I$ such that  \begin{center}
$\alpha=1+\sum\limits_{\substack{  i,j\\
                                         q_i\in \zeta G\\
                                        h_j\in  \zeta\Omega_l(G)}}r_{ij}q_i(h_j-1)+\gamma.$
\end{center}

Suppose that there exists some $h_j\in \zeta\Omega_l(G)-\zeta G$, then there is $\theta\in  G$
such that $[h_j,\theta]=c$. Note that $1+\theta(c-1)\in \Omega_1(V(\mathbb{F}_pG))$.
Hence $1+\theta(c-1)$ and $\alpha$  are commutative.
From this,  every element  has the same coefficient which is conjugated with $h_j$ in $G$.
Note that $\zeta\Omega_l(G)\bigcap\zeta G=\Omega_l( \zeta G)$.
It follows  there exists $\gamma_1\in I$ such that
\begin{center}
$\alpha=1+\sum\limits_{\substack{  i,j\\
                        q_i\in \zeta G\\
                        h_j\in \Omega_l( \zeta G)}}r_{ij}q_i(h_j-1)+\gamma_1. $
\end{center}
Hence
$$\alpha\in (1+\triangle(\zeta G,\Omega_l(\zeta G))) \times C=\Omega_l(V(\mathbb{F}_p\zeta G))\times C.$$
$\hfill\qedsymbol$
\end{proof}

\end{document}